\newtheorem{theorem}{Theorem}[section]
\numberwithin{equation}{section}
\newtheorem{definition}[theorem]{Definition}
\newtheorem{lemma}[theorem]{Lemma}
\newtheorem{corollary}[theorem]{Corollary}
\newtheorem{proposition}[theorem]{Proposition}
\newtheorem{remark}[theorem]{Remark}
\renewcommand{\maketitle}{
	\begin{center}
		{\LARGE\textbf{ \@title} \par}
		\vskip 1em
		{ \@author \par}
		\vskip 1em
		{\small \@date \par}
	\end{center}
	\vskip 1em
	\@thanks
}
\title{Global Dynamics of Nonlocal Diffusion Systems on Time-Varying Domains}
\author[a,c]{Xiandong Lin}
\author[b,c]{Hailong Ye\thanks{Corresponding author: yhl@szu.edu.cn}}
\author[c]{Xiao-Qiang Zhao}
\affil[a]{School of Mathematics, Sun Yat-sen University, Guangzhou 510275, Guangdong, China}
\affil[b]{School of Mathematical Sciences, Shenzhen University, Shenzhen 518060, Guangdong, China}
\affil[c]{Department of Mathematics and Statistics, Memorial University of Newfoundland,
	St. John's, NL A1C 5S7, Canada}
\date{}
\begin{document}

\maketitle

\begin{abstract}

We propose a class of nonlocal diffusion systems on time-varying domains, and fully characterize their
asymptotic dynamics in the asymptotically fixed, time-periodic and unbounded cases. The kernel is not
necessarily symmetric or compactly supported, provoking anisotropic diffusion or convective effects.
Due to the nonlocal diffusion on time-varying domains in our systems, some significant challenges
arise, such as the lack of regularizing effects of the semigroup generated by the nonlocal operator,
as well as the time-dependent inherent coupling structure in kernel.
By investigating a general nonautonomous nonlocal diffusion system in the space of bounded and measurable functions,
we establish a comprehensive and unified framework to rigorously examine the threshold dynamics of the
original system on asymptotically fixed and time-periodic domains. In the case of an asymptotically unbounded
domain, we introduce a key auxiliary function to separate vanishing coefficients from nonlocal diffusions.
This enables us to construct appropriate sub-solutions and derive  the global threshold dynamics via the comparison principle.
The findings may be of independent interest and the developed  techniques, which do not rely on the
existence of the principal eigenvalue, are expected to find further
applications in the related nonlocal diffusion problems.
We also conduct numerical simulations based on a practical model to illustrate our analytical results.
\end{abstract}

\noindent
\textbf{Keywords:} Nonlocal diffusion systems, time-varying domain, global dynamics, periodic system, bounded and measurable functions. \\
\textbf{2020 MSC:} 35B40, 35K57, 37C65, 92D25.

\section{Introduction}

The overwhelming majority of studies devoted to diffusion processes assume they occur in a static media.
However, the expansion and contraction of spatial media is ubiquitous in reality.
In fact, the universe we live in undergoes an accelerating expansion \cite{Riess1998}, and elementary biological
processes such as morphogenesis (i.e., the process whereby organisms grow from a single cell)
involve tissue expansion \cite{Shraiman2005}. At the species scale, the habitats
of most organisms frequently shift in response to changes in ecological environments. For example, the depth
and surface area of many rivers and lakes fluctuate seasonally. During summer or the rainy season, rising water
levels expand habitats, benefiting organisms living within. Conversely, in winter or the dry season, the water
levels drop, reducing the available habitat. There are also several examples of habitats that are constantly
expanding, such as those of {\it Aedes} mosquitoes, which can transmit dengue fever, yellow fever, Zika virus,
and other infectious diseases. Global warming and frequent human activities have significantly expanded areas
suitable for their survival and reproduction (\cite{MR4251021,MR4784487}).
All these facts highlight the need to incorporate time-varying domains
into reaction-diffusion models to better capture population dynamics in changing environments.

The role of domain growth in reaction-diffusion models was first investigated by Newman and Frisch
\cite{Newman1979} in their studies of chick limb development. Kondo and Asai \cite{Kondo1995} demonstrated
that growth could account for the stripe patterning observed in the {\it Pomacanthus} fish, where additional
stripes emerge progressively as the fish matures (see also Painter et al. \cite{Painter-Maini1999}).
 Crampin et al.
\cite{Crampin-Hackborn-Maini,Crampin-Gaffney-Maini,Crampin-Maini2001,Crampin-Gaffney-Maini2002} further introduced
mathematical models to incorporate more general growth cases. Their results highlighted the critical role
of domain growth in enhancing pattern robustness and generating diverse biological patterns. In addition,
Chaplain et al. \cite{Chaplain-Ganesh-Graham2001} extended reaction-diffusion models to spherical surfaces,
demonstrating how domain geometry and growth drive spatial heterogeneity in solid tumor development.
Since then, population persistence and pattern formation on time-varying domains have attracted increasing
interest in the study of reaction-diffusion models, see, e.g.,  \cite{MR2643885,MR2110056,MR3711894,MR4251021,MR4636873}
and references therein. In particular, Lam et al. \cite{MR4784487} studied the asymptotic dynamics of
reaction-diffusion systems under the different evolving cases of domain using the theories of chain
transitive sets (\cite{MR3643081}) and principal eigenvalues.
In the aforementioned studies, all spatial diffusion processes in the models are described by Laplacian operator.

Recently, the nonlocal diffusion, describing the movements between both adjacent and nonadjacent spatial locations,
has been introduced in reaction-diffusion equations, see \cite{MR3990735,MR4312283,MR3000610,MR3957990,MR3968264,MR3401600}.
Besides, the associated nonlocal convolution-type operators have been widely employed to model various movements
arising in population dynamics, phase transition phenomena and image processing, see \cite{MR1811307,MR2028048,MR2480109}
and the references therein. Therefore, beyond the independent interest in specific models, a clearer and
deeper understanding of nonlocal diffusion is expected to offer valuable insights with implications across
multiple disciplines.

In light of these reasons and inspired by \cite{MR4784487,MR3957990}, we aim to propose a class of
reaction-diffusion systems with nonlocal diffusion and investigate their global dynamics on time-varying domains.
More precisely, we consider the following nonlocal diffusion system for $x\in \Omega_t$ and $t>0$,
\begin{equation}\label{intro-1}
\dfrac{\partial v}{\partial t}+(a\cdot \nabla) v+(\nabla\cdot a)v=D\int_{\mathbb{R}^n}J(x-z)v(t, z)\,\mathrm{d}z-Dv+f(v),
\end{equation}
under the different evolving conditions of domain $\Omega_t$, which incorporates the homogeneous Dirichlet boundary condition
\begin{equation}\label{intro-2}
	v(t, x)=0,\quad t>0, \   x\in\mathbb{R}^n\backslash \Omega_t,
\end{equation}
and the initial condition
\begin{equation}\label{intro-3}
	v(0, x)=v_0(x), \quad x\in\Omega_0.
\end{equation}
For the detailed derivation of \eqref{intro-1}--\eqref{intro-3}, we refer to Section 2.
Here, $v=(v_{1},\dots,v_{m})^{T}$ represents the vector of densities of $m$ interacting species, $m$ and $n$ are positive
integers, $D={\rm diag}\left\lbrace d_{1},\dots, d_{m}\right\rbrace$, $d_{i}>0$ is diffusion coefficient for $i=1,\dots,m$,
$f(v)=(f_1(v),\dots,f_m(v))^T$ denotes the reaction terms, the kernel $J$ satisfies the assumption {\bf (J)} below, and
$\Omega_{t}\subset \mathbb{R}^{n}$ is simply connected and bounded with smooth boundary $\partial\Omega_t$ for all $t\ge0$.
The condition \eqref{intro-2} indicates that the habitat outside $\Omega_t$ is so hostile that the species die immediately
upon entering (\cite{MR2028048}).

Notice that for each $i=1,\dots,m$, the diffusion of the density $v_i$ at a point $x$ and time $t$ depends on the values
of $v_i$ at all points in the set $\{x\}+{\rm supp}J$, which is what makes the diffusion operator nonlocal in system \eqref{intro-1}.
As stated in \cite{MR1999098}, the kernel $J(x-z)$ denotes the probability distribution of an individual jumping
from location $z$ to location $x$, then the rate at which individuals are arriving to position $x$ from all other places
is given by $d_i\int_{\mathbb{R}^n}J(x-z)v_i(t, z)\,\mathrm{d}z$, while the rate at which they are leaving location $x$
to travel to all other sites is given by $-d_i\int_{\mathbb{R}^n}J(z-x)v_i(t, x)\,\mathrm{d}z=-d_iv_i(t,x)$.

In addition to the nonlocal feature, system \eqref{intro-1} incorporates two additional terms associated with the
flow $a$, which arises from domain evolution: $(a\cdot \nabla) v$ and $(\nabla\cdot a)v$.
The former represents material transport around the domain at a rate determined by the flow, while the latter accounts
for dilution or concentration due to local volume changes. As the first step in the analysis, it is essential
to reformulate the model \eqref{intro-1}--\eqref{intro-3} on a fixed domain. For this purpose, we focus on a class of
time-varying domains characterized by linear isotropic deformation with spatially uniform rates.
Then a smooth positive function $\rho$, with $\rho(0)=1$, is introduced to measure the growth of the domain:
$\Omega_t=\rho(t)\Omega_0$ for $t\ge0$.
Detailed discussions and fundamental assumptions are provided in Section 2.
Thus, by denoting $u(t,y):=v(t,\rho(t)y)$, we derive the following equivalent model,
\begin{eqnarray}\label{intro-4}
\begin{cases}
\dfrac{\partial u}{\partial t}=D\displaystyle\int_{\Omega_0}J_{\frac{1}{\rho(t)}}(y-z)u(t,z)\,\mathrm{d}z-Du
-\dfrac{n\dot{\rho}(t)}{\rho(t)}u+f(u), &\, t>0, \  y\in\Omega_0,
			\\[3mm]
u(0,y)=u_0(y),& \, y\in\Omega_0,
\end{cases}
\end{eqnarray}
where
\begin{equation}\label{J-rho}
J_{\frac{1}{\rho(t)}}(y):=\rho^n(t)J(\rho(t)y),\quad t\ge0.
\end{equation}
Obviously, the transport effect is excluded in \eqref{intro-4} at the cost of the kernel being inherently coupled
with $\rho(t)$, making them inseparable. In view of the equivalence between \eqref{intro-1}--\eqref{intro-3} defined
on $\Omega_t$ and \eqref{intro-4} reformulated on $\Omega_0$, we will focus on the global dynamics of \eqref{intro-4}
under the following three asymptotic cases of $\rho$. Each case corresponds to a specific type of domain evolution.
Notably, the first two cases describe asymptotically bounded domains since $\overline{\lim\limits_{t\to\infty}}\rho(t)<\infty$.
\begin{itemize}
\item  {\it Asymptotically fixed case:}
$$
\lim\limits_{t\to+ \infty} \rho (t)= \rho_\infty>0,\quad \lim\limits_{t\to+ \infty} \dot\rho(t)=0.
$$
In this case, the domain $\Omega_t$ asymptotically approaches a fixed domain as $t\to+\infty$. A biologically reasonable example is
logistic or saturated growth (\cite{MR2643885,MR2110056}) of the form $\rho(t)=\frac{\rho_\infty e^{\kappa t}}{\rho_\infty-1+e^{\kappa t}}$,
where $\kappa>0$. This can be seen easily by noting
$\frac{\mathrm{d} \rho}{\mathrm{d} t}=\kappa\rho\left(1-\frac{\rho}{\rho_{\infty}}\right)$.
\item  {\it Asymptotically time-periodic case:}
$$
\lim\limits_{t\to +\infty}  (\rho (t) -\rho_T(t))= \lim\limits_{t\to +\infty} (\dot\rho (t)-\dot\rho_T(t))=0.
$$
Here, $\rho_T$ denotes a positive $T$-periodic function for some $T>0$, and the domain $\Omega_t$ asymptotically exhibits periodic oscillations
as $t\to+\infty$ (\cite{MR4784487,MR4636873}).
\item  {\it Asymptotically unbounded case:}
$$
\lim\limits_{t\to +\infty} \rho (t)= +\infty,\quad \lim\limits_{t\to +\infty}\frac{\dot\rho(t)}{\rho (t)} =k\ge0.
$$
In this case, the domain $\Omega_t$ eventually expands to $\mathbb{R}^n$ as $t\to+\infty$, with the relative growth
rate of $\rho(t)$ approaching a nonnegative constant $k$. Biologically relevant examples of $\rho$ include exponential
growth $\rho(t)=e^{kt}$ and a simpler alternative $\rho(t)=1+\kappa t$ with $\kappa>0$, both of which have been discussed in \cite{MR2643885,MR2110056}.
\end{itemize}

It should be noted that, though the problem \eqref{intro-4} can be regarded as a nonlocal counterpart of those studied in \cite{Crampin-Hackborn-Maini,Crampin-Gaffney-Maini,Crampin-Maini2001,Crampin-Gaffney-Maini2002,MR4784487,MR2643885,MR2110056,MR3711894,MR4251021},
it presents greater challenges for the analysis of its global dynamics. For example, there is no regularizing effect in general due
to nonlocal diffusion, which renders the arguments employed in the local setting invalid, including the theories of asymptotically autonomous semiflows
and chain transitive sets \cite{MR3643081}. Furthermore, in the local version of \eqref{intro-4} as shown in
\cite{MR4784487,MR3711894,Crampin-Gaffney-Maini,MR2643885}, the diffusion term is given by $\frac{d}{\rho^2(t)}\Delta u$.
This decoupling of the Laplacian operator from $\rho(t)$ allows it to be treated independently, enabling the principal
eigenvalue theory as a powerful tool for analyzing the asymptotic dynamics of local systems.
However, this method is no longer applicable to
the nonlocal system \eqref{intro-4} because of the inherent coupling structure in \eqref{J-rho}, particularly in the asymptotically
unbounded case, where the diffusion effect vanishes as $\rho(t)\to+\infty$.
In addition, the principal eigenvalue of nonlocal diffusion operators do not exist in general.
Therefore, the previous arguments need to be improved or reconceived.

In Section 3, to characterize the asymptotic dynamics of \eqref{intro-4} in the first two cases, we consider a
general nonautonomous nonlocal diffusion system given by
\begin{equation}\label{intro-5}
\dfrac{\partial u}{\partial t}=D\displaystyle\int_{\Omega_{0}}\mathcal{K}(t,x-y)u(t,y)\,\mathrm{d}y-Du +g(t,x,u),\quad  t>0, x\in \Omega_{0}.
\end{equation}
Let $\omega(\Phi)$ denote the spectral bound of the evolution family generated by the corresponding linearized system.
When $\mathcal{K}(t,x)=\mathcal{K}(x)$ and $m=1$,
the global dynamics of \eqref{intro-5} in the case $\omega(\Phi)>0$ was established by Rawal and Shen \cite{MR3000610}, where the
construction of a perturbed problem admitting a principal eigenvalue is a crucial step in proving the existence
and uniqueness of a positive time-periodic solution. Subsequently, Shen and Vo \cite{MR3957990} extended the above results to the case
$\omega(\Phi)<0$. It is noteworthy that, in the critical case $\omega(\Phi)=0$, the uniqueness of a nonnegative time-periodic solution
and the global dynamics of system \eqref{intro-5} remain an interesting and challenging open problem due to several overwhelming difficulties,
including the absence of regularizing effects and the principal eigenvalue for the nonlocal diffusion operator.
If $g(t,x,u)=g(x,u)$, the global dynamics in the critical case was established in \cite{Berestycki-Coville-VoJFA,Berestycki-Coville-VoJMB} using a
Harnack-type inequality for nonlocal elliptic equations and bootstrap arguments. For traveling waves and spreading speeds in
nonlocal diffusion equations with time heterogeneities in both the kernel and the reaction, we refer to the recent works  \cite{MR4454378,MR4725136}.

Here we propose some novel approaches for rigorously analyzing the threshold dynamics of \eqref{intro-5} in the time-periodic case based on $\omega(\Phi)$.
The techniques developed here are independent of principal eigenvalue and may be applicable to a wide
class of related nonlocal diffusion problems.
Specifically, we establish the comparison principle and well-posedness of \eqref{intro-5} in $C( \mathbb{R}_{+},\hat X_{+})$,
and develop a deeper understanding of time-periodic solutions in $\tilde X_{+}^{v}$,  where $\hat X$ and $\tilde X$ are two spaces of  bounded and measurable functions  on
$\overline\Omega_{0}$ and $\mathbb{R}\times\overline\Omega_{0}$, respectively, see Lemma \ref{le2.1} and Theorem \ref{generalized-dynamical} for more details.
These results will be utilized in Section 4 to analyze the threshold dynamics of \eqref{intro-4} in the asymptotically
fixed and time-periodic cases.

In Section 5, we consider the asymptotically unbounded case. To address the challenges posed by the inherent coupling structure
in \eqref{J-rho}, we construct a nonnegative function $\phi\in C^1(\mathbb{R}^n)$, supported on $\Omega_0$, satisfying
\begin{equation*}
		\int_{\Omega_0}J_{\frac{1}{\rho(t)}}(x-y)\phi(y)\,\mathrm{d}y-\phi(x)\ge -\dfrac{1}{\rho(t)}\phi(x), \quad x\in\mathbb{R}^n, t\ge T.
	\end{equation*}
This inequality partially decouples the vanishing factor $\frac{1}{\rho(t)}$ from the effects of nonlocal diffusion.
Combined with the entire solution of limiting system corresponding to \eqref{intro-4}, this auxiliary function enables
the construction of appropriate subsolutions, leading to the derivation of global threshold dynamics via the comparison principle.

For a given $v\in\mathrm{Int}(\mathbb{R}^{m}_{+})$, let $\mathbb{M}:= \left\{u\in\mathbb{R}^{m}: 0\le u \le v\right\}$.
Through out the paper, we always assume that $J$ and $f$ satisfy the following assumptions:
\begin{itemize}
	\item[\textbf{(J)}] $ J: \mathbb{R}^n\to\mathbb{R}$ is a continuous nonnegative function with $ J(0)>0$ and $\int_{\mathbb{R}^{n}}J(x)\,\mathrm{d}x=1$;
	\item[\textbf{(F)}] $f\in C^{0,0,1}(\mathbb{R}\times\overline\Omega_{0}\times\mathbb{R}^{m},\mathbb{R}^{m})$; $\mathcal{D}f(u):=(\frac{\partial f_{i}(u) }{\partial u_{j}})_{m\times m}$ is cooperative and irreducible for  $ u\in \mathbb{M}$;  $f(0)=0 $, and there exists $\sigma>0$ such that for each $0\le i\le m$,  $f_{i}(u)-n \frac{\dot\rho(t)}{\rho(t)}v_{i}\le -\sigma $ whenever $t\ge 0$ and  $u\in \mathbb{M}$ with $u_{i}=v_{i}$;   $\alpha f(u)< f(\alpha u)$ for all  $u\in \mathbb{M}$ with $u\in\mathrm{Int}(\mathbb{R}^{m}_{+})$ and $\alpha\in (0,1)$.
\end{itemize}

We remark that the kernel $J$ is not necessarily symmetric or compactly supported, which means that individuals have greater
probability of jumping in one direction than in others, provoking anisotropic diffusion or convective effects
(\cite{MR2722295,MR2356418}).
In contrast to the previous studies \cite{MR4784487,MR2110056,MR4251021,Crampin-Maini2001}, we remove the assumption that $\dot{\rho} > 0$, allowing the domain
to expand and shrink, as described in \cite{MR4636873} for positive and negative growth.

The remainder of this paper is organized as follows. In Section 2, we derive the nonlocal diffusion model on a
time-varying domain and reformulate it on a fixed domain. In Section 3, we provide a comprehensive framework to
characterize the threshold dynamics of a general nonautonomous nonlocal diffusion system.
These rusults subsequently are utilized in Section 4 to rigorously examine the global dynamics of the original
nonlocal system on asymptotically fixed and time-periodic domains, respectively.
In Section 5, we investigate the global dynamics on asymptotically unbounded domains.
Finally, numerical simulations are given in Section 6.

\section{Nonlocal model: a derivation}

In this section, we derive the nonlocal diffusion systems formulated on a time-varying domain.
To this end, some basic assumptions are required on the time-varying domain.
Following the approach of \cite{Crampin-Hackborn-Maini,Crampin-Gaffney-Maini,Crampin-Maini2001,Crampin-Gaffney-Maini2002},
we adopt a general framework without distinguishing between specific tissue or media types, enabling the inclusion
of properties for any given tissue or medium. As a result, no constitutive equations are introduced, and the tissue
is assumed to be incompressible, that is, the domain undergoes deformation and expansion due to growth, with no
accompanying change in density.

\subsection{Time-varying domain and flow}

Suppose that tissue which occupies the domain $\Omega_0\subseteq\mathbb{R}^n$ at time $t=0$ deforms and expands
so that at a subsequent time $t$ it occupies a new continuous, simply connected and bounded domain $\Omega_t$,
and that each point of $\Omega_t$ can be identified with position vector $x$, which is occupied at $t$ by
the particle in tissue with position vector $\mathcal{X}$ at the time $t=0$. Then the motion of tissue can be
described by specifying the dependence of the positions $x$ of the particles of tissue at time $t$ on their
initial positions $\mathcal{X}$ at time $t=0$, that is,
$$
x=\Gamma(t, \mathcal{X}), \quad t\ge0,
$$
for all $\mathcal{X}\in\Omega_0$, where $\Gamma: \mathbb{R}_+\times\Omega_0\to\mathbb{R}^n$ is a smooth injection.

In the spatial description, the flow $a(t,x)$ represents the velocity at time $t$ of the particle which was at
$\mathcal{X}=\Gamma^{-1}(t, x)$ initially, that is,
\begin{equation}\label{flow-velocity}
	a(t, x)=\dfrac{\partial\Gamma(t, \mathcal{X})}{\partial t}\Big|_{\mathcal{X}=\Gamma^{-1}(t,x)},
\end{equation}
which may be typically generated by rigid motions and deformations of the tissue. Here, we neglect rigid motions
since they do not affect the reaction and diffusion processes, and assume that the flow arises
solely from the (positive or negative) growth of the tissue.
It is worth noting that the velocity gradient tensor $\nabla a$, which describes the relative velocity of each
particle with respect to its neighbors, is locally determined by some constitutive equations characterizing
tissue properties, such as prepatterns in growth factors, as well as cellular or sub-cellular structures influencing
the direction of growth. These detailed factors, however, are beyond the scope of this paper.
By continuum mechanics \cite{MR636255}, the diagonal components of $\nabla a$ represent the rate of extension.
Consequently, for any fixed time $t\ge0$, the domain $\Omega_t$ expands if $\nabla\cdot a(t,x)>0$ for all $x\in\Omega_t$,
and contracts if $\nabla\cdot a(t,x)<0$ for all $x\in\Omega_t$.

\subsection{Nonlocal diffusion system}
Let $v(t,x)=(v_1(t,x), v_2(t,x),\dots,v_m(t,x))^T$ denote the vector of densities of $m$ interacting species at
position $x\in\Omega_t$ and time $t$. Inspired by the idea in \cite{MR3023366}, we define the nonlocal flux functional $\mathcal{F}(\cdot,\cdot\,; \psi)$
for any pair of measurable domains $\Omega'$ and $\Omega''$, where $\psi$ is a nonnegative integrable function defined on
$\Omega'\cup\Omega''$, as follows:
$$
\mathcal{F}\big(\Omega', \Omega''; \psi\big)=\int_{\Omega''}\int_{\Omega'}J(x-z)\psi(z)\,\mathrm{d}z\mathrm{d}x-\int_{\Omega'}\int_{\Omega''}J(z-x)\psi(x)\,\mathrm{d}x\mathrm{d}z,
$$
which quantifies the net flux from $\Omega'$ to $\Omega''$ arising from the nonlocal interactions of $\psi$ over these domains.
Obviously, $\mathcal{F}\big(\Omega', \Omega''; \psi\big)=-\mathcal{F}\big(\Omega'', \Omega'; \psi\big)$.
Then, the nonlocal balance law \cite{MR3010838} for $v$ over an elemental volume $V_t$ evolving in time is given by
$$
\dfrac{\mathrm{d}}{\mathrm{d}t}\int_{V_t}v(t, x)\,\mathrm{d}x=D\mathcal{F}\big(\mathbb{R}^n\backslash V_t, V_t; v\big)+\int_{V_t}f(v)\,\mathrm{d}x,
$$
where $D$ represents the diffusion coefficient, and $f(v)$ denotes the reaction term.
Applying the Reynolds transport theorem for the left-hand side, we obtain
$$
\int_{V_t}\left(\dfrac{\partial v}{\partial t}+(a\cdot \nabla) v+(\nabla\cdot a)v\right)\mathrm{d}x
=D\mathcal{F}\big(\mathbb{R}^n\backslash V_t, V_t; v\big)+\int_{V_t}f(v)\,\mathrm{d}x.
$$
The domain is fixed in time and so we may differentiate through the integral.
By the arbitrariness of $V_t$, the governing equation becomes
\begin{equation}\label{general_model-1}
\dfrac{\partial v}{\partial t}+(a\cdot \nabla) v+(\nabla\cdot a)v=D\int_{\mathbb{R}^n}J(x-z)v(t, z)\,\mathrm{d}z-Dv+f(v)
\end{equation}
for $x\in\Omega_t,\,t>0$.

Compared to the standard reaction-diffusion systems, the time-varying domain introduces two additional terms
into \eqref{general_model-1}. The first term, $(a\cdot \nabla) v$, represents the transport of material around
the domain driven by the flow. The second term, $(\nabla\cdot a)v$, accounts for dilution due to local volume
expansion when $\nabla\cdot a>0$, or concentration due to local volume contraction when $\nabla\cdot a<0$.
Consequently, by incorporating the homogeneous Dirichlet boundary conditions
\begin{equation}\label{general_model-2}
	v(t, x)=0,\quad x\in\mathbb{R}^n\backslash \Omega_t, \ t>0,
\end{equation}
and the initial condition
\begin{equation}\label{general_model-3}
	v(0, x)=v_0(x), \quad x\in\Omega_0,
\end{equation}
we formulate the initial-boundary value problem \eqref{general_model-1}--\eqref{general_model-3} with nonlocal
diffusion on the time-varying domain.

\subsection{Linear isotropic deformation}

In most cases, the properties of solutions to \eqref{general_model-1}--\eqref{general_model-3} on time-varying
domains are difficult to study, even after reformulating the problem on a fixed domain, where the resulting form
is significantly more complex, see for example \cite{MR2643885}.
In this paper, we focus on a special class of time-varying domains undergoing linear isotropic deformation,
with deformation rates independent of spatial position.
By neglecting the rigid-body translations and rotations of the tissue, the coordinate system can be appropriately
chosen such that there is a reference point which remains at the origin of the coordinate all the time. Then
there exists a smooth positive function $\rho$ defined on $[0,+\infty)$ with $\rho(0)=1$ such that
$$
x=\Gamma(t, \mathcal{X})=\rho(t)\mathcal{X},\quad \mathcal{X}\in\Omega_0,\ t\ge0.
$$
The flow can then be determined by \eqref{flow-velocity} as $a(t, x)=\dot{\rho}(t)\mathcal{X}=x\dot{\rho}(t)/\rho(t)$.
To facilitate analysis, we transform the spatial coordinates into the reference domain $\Omega_0$, which is
independent of time, and simplify notation by using the coordinate $y \in \Omega_0$ instead of $\mathcal{X}$ hereafter.
Define
$$
u(t,y)=v(t, x)=v(t,\rho(t)y), \quad y\in\Omega_0,\ t>0.
$$
Then transformed nonlocal system of \eqref{general_model-1}--\eqref{general_model-3}
with nonautonomous coefficients becomes
\begin{eqnarray}\label{fixed-domain-model}
	\begin{cases}
		\dfrac{\partial u}{\partial t}=D\displaystyle\int_{\Omega_0}J_{\frac{1}{\rho(t)}}(y-z)u(t,z)\,\mathrm{d}z-Du-\dfrac{n\dot{\rho}(t)}{\rho(t)}u+f(u), &\, y\in\Omega_0,\, t>0,
			\\[3mm]
		u(0,y)=u_0(y),& \, y\in\Omega_0,
	\end{cases}
\end{eqnarray}
where $J_{\frac{1}{\rho(t)}}(y)=\rho^n(t)J(\rho(t)y)$ satisfying
$\int_{\mathbb{R}^{n}}J_{\frac{1}{\rho(t)}}(y)\,\mathrm{d}y=1$ for all $t\ge0$.

\begin{remark}
Assume that $J$ is a smooth, nonnegative, and symmetric function supported on the unit ball in $ \mathbb{R}^n$,
satisfying $\int_{\mathbb{R}^n}J(y)\mathrm{d}y=1$. By appropriately rescaling the kernel $J_{\frac{1}{\rho(t)}}$
in \eqref{fixed-domain-model} and taking the limit as the scaling parameter goes to zero, the solutions of
the nonlocal system \eqref{fixed-domain-model} converge uniformly to the solution of the local analogous system
(see \cite{MR4784487}) on $[0,T]$ for any fixed $T>0$. For the proof, please refer to \cite{MR3401600}. If $J$
is asymmetric, similar convergence results can be found in \cite{MR2722295}.
\end{remark}

\section{A general nonautonomous nonlocal diffusion system}

In this section, we consider a class of nonautonomous nonlocal diffusion systems within a fixed domain:
\begin{equation}\label{eq2.2}
\left\{
{\begin{array}{*{20}{l}}
		\dfrac{\partial u}{\partial t}=D\displaystyle\int_{\Omega_{0}}\mathcal{K}(t,x-y)u(t,y)\,\mathrm{d}y-Du +g(t,x,u), & t>0, x\in \Omega_{0}, \\[3mm]
		u(0,x)=u_{0}(x),  & x \in \Omega_{0}.\\
\end{array}} \right.
\end{equation}
where $\mathcal{K}(t,x-y)u(t,y):=(\mathcal{K}_{1}(t,x-y)u_{1}(t,y),\dots,\mathcal{K}_{m}(t,x-y)u_{m}(t,y))^{T} $.
We first prove the comparison principle and establish the well-posedness of system \eqref{eq2.2}, and then
provide a comprehensive framework to characterize the threshold dynamics of \eqref{eq2.2} in the time-periodic case based
on the spectral bound. These results will  be utilized in Section 4 to rigorously examine the global dynamics of
the original system on asymptotically fixed and time-periodic domains, respectively.

\subsection{Comparison principle and well-posedness}
The following assumptions are imposed on $\mathcal{K}$ and $g$:
\begin{itemize}

\item[\textbf{(K)}] For each $1\le i\le m$, $\mathcal{K}_{i}$ is a nonnegative continuous function with
	$\mathcal{K}_{i}(t,0)>0$, and $\int_{\mathbb{R}^{n}}\mathcal{K}_{i}(t,x)\mathrm{d}x=1$
	for all $ t\in \mathbb{R}$;

\item[\textbf{(G1)}] $g\in C^{0,0,1}(\mathbb{R}\times\overline\Omega_{0}\times\mathbb{R}^{m},\mathbb{R}^{m})$;
    $(\frac{\partial g_{i}(t,x,u) }{\partial u_{j}})_{m\times m}$ is cooperative for all
	$x\in\overline\Omega_{0}, t\ge 0, 0\le u\le v$,	and for each $1\le i\le m$,
	$g_{i}(t,x,0)=0, g_{i}(t,x,u)\le 0 $ whenever $u\in \mathbb{M}$	with $u_{i}=v_{i}$;

\item[\textbf{(G2)}]  $(\frac{\partial g_{i}(t,x,u) }{\partial u_{j}})_{m\times m}$ is irreducible for all $t\ge0$,
    $x\in \overline\Omega_{0}$ and $u\in  \mathbb{M}$.

\end{itemize}
Define the norm on $ \mathbb{R}^{m}$ as $\left\| a\right\|_{\mathbb{R}^{m}}:= \max\limits_{i=1,\dots,m}\left| a_{i}\right|$,
where $a=(a_{1},\dots,a_{m})^{T}\in\mathbb{R}^{m} $. We denote $X:=C(\overline\Omega_{0},\mathbb{R}^{m})$,
equipped with the maximum norm
$\left\| \varphi\right\| _{X}:=\max\limits_{x\in\overline\Omega_{0} } \left\| \varphi(x)\right\|_{\mathbb{R}^{m}}$,
and the positive cone $X_{+}:=C(\overline\Omega_{0},\mathbb{R}^{m}_{+})$.
Inspired by \cite{MR2556498,MR3968264}, we introduce
$$
\hat X:=\{\varphi:\overline\Omega_{0}\to\mathbb{R}^{m}; \varphi \textrm{ is bounded and Lebesgue measurable on } \overline\Omega_{0}\}
$$
which is equipped with the norm
$\left\|\varphi\right\| _{\hat X}:=\sup\limits_{x\in\overline\Omega_{0} } \left\|\varphi(x)\right\|_{\mathbb{R}^{m}}$,
and the corresponding positive cone
$\hat X_{+}:=\left\lbrace\varphi\in \hat X: \varphi(x)\ge 0,\,x\in\overline\Omega_{0}\right\rbrace$.
It can be readily verified that $(\hat X, \|\cdot\|_{\hat X}) $ is a Banach space and $\mathrm{Int}(\hat X_{+})\neq\emptyset$.

Let $a,b\in\mathbb{R}^{m} $. We write $a-b\ge 0$ if $a-b\in\mathbb{R}^{m}_{+}$; $a-b> 0$ if
$a-b\in\mathbb{R}^{m}_{+}\setminus\left\{0\right\}$; and $a-b\gg 0$ if $a-b\in\mathrm{Int}(\mathbb{R}^{m}_{+})$.
Similar to the order defined in $\mathbb{R}^{m}$, we can define the partial order induced by the positive cone
in $X$ and $ \hat X$, respectively. For any given $a\in \mathbb{R}^{m}$, we denote
$$
X_{+}^{a}:=\left\lbrace\varphi\in X: 0\le\varphi(x)\le a,\,x\in\overline\Omega_{0}\right\rbrace \ \textrm{ and } \ \hat X_{+}^{a}:=\left\lbrace
\varphi\in\hat X: 0\le\varphi(x)\le a,\,x\in\overline\Omega_{0}\right\rbrace.
$$

\begin{definition}
Let $l\in(0,+\infty]$. The function $u\in C(\left[0, l\right) , \hat X)$ is called a supersolution (or subsolution)
of the equations corresponding to \eqref{eq2.2} if $\frac{\partial u}{\partial t}(t,x)$ exists for $t\in[0,l)$ and
$x\in \overline \Omega_{0}$, and satisfies
\begin{equation*}
\dfrac{\partial u}{\partial t} \ge (\le)\ D\displaystyle\int_{\Omega_{0}}\mathcal{K}(t,x-y)u(t,y)\,\mathrm{d}y-Du +g(t,x,u), \quad t\in(0,l), x\in \Omega_{0}.
\end{equation*}

The function $u\in C(\left[0, l\right) , \hat X)$ is called a solution of system  $\eqref{eq2.2}$ if it is both
a supersolution and subsolution, and $u(0,x)=u_{0}(x)$.
\end{definition}

We first establish the maximum principle for the following linear system, and then apply it to derive the
comparison principle for the problem \eqref{eq2.2},
\begin{equation}\label{linear-system}
\dfrac{\partial u}{\partial t}=  D\displaystyle\int_{\Omega_{0}}\mathcal{K}(t,x-y)u(t,y)\,\mathrm{d}y-Du(t,x) +M(t,x)u(t,x)
\end{equation}
for $t>0$, $x\in\Omega_{0}$, where $M(t,x)=(m_{ij}(t,x))_{m\times m}$  is cooperative for all $x\in\overline\Omega_{0}, t\ge 0$,
with $m_{ij}\in C(\mathbb{R}_{+}\times\overline\Omega_{0},\mathbb{R})$, and there exists a point $x_{0}$ such that
$M(t,x_{0})$ is irreducible for all $t\ge0$. For this purpose, it is necessary to introduce an auxiliary lemma.
Define a set
$$
\hat {\mathbf 0}:=\{\phi\in \hat X: \phi(x)=0\textrm{ almost everywhere in } \overline\Omega_{0}\}.
$$

\begin{lemma}\label{le2.2-1}
Assume that {\rm\textbf{(K)}} holds and $T_0>0$. Let $u\in C([0,T_{0}],\hat X)$ and satisfy the system \eqref{linear-system}
on $[0,T_0]\times\overline\Omega_{0}$. If $u(0,\cdot)\in\hat {\mathbf 0}$, then $u(t,\cdot)\in\hat {\mathbf 0}$
for any $t\in(0, T_0]$.
\end{lemma}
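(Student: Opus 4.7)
The plan is to convert the pointwise-in-$x$ linear ODE into its Duhamel (mild) form and then control the $L^{1}(\Omega_{0};\mathbb{R}^{m})$-type norm
\[
N(t):=\int_{\Omega_{0}}\|u(t,y)\|_{\mathbb{R}^{m}}\,\mathrm{d}y
\]
via Gronwall's inequality. The key feature that unlocks this approach is that the nonlocal convolution reads $u(s,\cdot)$ only through its almost-everywhere equivalence class, so $N$ is the natural observable that descends to the quotient $\hat X/\hat{\mathbf 0}$.

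For each fixed $x\in\overline\Omega_{0}$, I would introduce the matrix propagator $A(t,s,x)\in\mathbb{R}^{m\times m}$ solving $\partial_{t}A=(-D+M(t,x))A$ with $A(s,s,x)=I_{m}$. Continuity of $M$ on the compact set $[0,T_{0}]\times\overline\Omega_{0}$ ensures that $A$ is jointly continuous and uniformly bounded by some $C_{0}>0$ on $\{0\le s\le t\le T_{0}\}\times\overline\Omega_{0}$. Treating the nonlocal integral as a continuous-in-$t$ inhomogeneity valued in $\mathbb{R}^{m}$ (continuity following from $u\in C([0,T_{0}],\hat X)$, continuity of $\mathcal{K}$, and dominated convergence), Duhamel's formula applied at each fixed $x$ gives
\begin{equation*}
u(t,x)=A(t,0,x)u(0,x)+\int_{0}^{t}A(t,s,x)\,D\int_{\Omega_{0}}\mathcal{K}(s,x-y)u(s,y)\,\mathrm{d}y\,\mathrm{d}s.
\end{equation*}
Taking $\|\cdot\|_{\mathbb{R}^{m}}$ and integrating over $\Omega_{0}$, the first term contributes zero because $u(0,\cdot)\in\hat{\mathbf 0}$ while $\|A(t,0,\cdot)\|\le C_{0}$. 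For the second, the componentwise bound $\|\mathcal{K}(s,x-y)u(s,y)\|_{\mathbb{R}^{m}}\le \max_{i}\mathcal{K}_{i}(s,x-y)\|u(s,y)\|_{\mathbb{R}^{m}}$, Fubini, and $\int_{\Omega_{0}}\mathcal{K}_{i}(s,x-y)\,\mathrm{d}x\le\int_{\mathbb{R}^{n}}\mathcal{K}_{i}(s,z)\,\mathrm{d}z=1$ from assumption \textbf{(K)} produce an inequality $N(t)\le C_{1}\int_{0}^{t}N(s)\,\mathrm{d}s$ on $[0,T_{0}]$ for some constant $C_{1}>0$; Gronwall's inequality then forces $N\equiv 0$, which is the desired conclusion.

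The main difficulty is conceptual rather than computational: as emphasized in the introduction, the nonlocal operator provides no regularizing effect, so one cannot hope to promote a.e.\ nullity to pointwise nullity, and the entire argument must be phrased at the level of the a.e.\ equivalence class. The Duhamel split makes this feasible by cleanly isolating the local multiplicative part $-D+M(t,x)$, which acts pointwise in $x$ and preserves any nullity at each fixed $x$, from the nonlocal convolution, which only couples via integrals; $N$ is then the one quantity that both descends to the $\hat{\mathbf 0}$-quotient and admits a linear Gronwall bound. The only genuinely technical point is to justify the mild formula pointwise in $x$ rather than as a $\hat X$-valued identity, and this is handled by observing that, at each fixed $x$, $t\mapsto u(t,x)$ is an honest $\mathbb{R}^{m}$-valued classical solution of a linear ODE with continuous-in-$t$ right-hand side.
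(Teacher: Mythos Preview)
Your approach is correct and shares the paper's core strategy: control the $L^{1}$-type quantity $N(t)=\int_{\Omega_{0}}\|u(t,\cdot)\|_{\mathbb{R}^{m}}$ via Gronwall, exploiting $\int_{\mathbb{R}^{n}}\mathcal{K}_{i}(s,\cdot)=1$ together with Fubini to absorb the nonlocal term. The difference lies only in the implementation. The paper differentiates $\sum_{i}\int_{\Omega_{0}}|u_{i}|$ directly by taking the inner product of the equation with $\mathrm{sgn}(u)$, which yields the differential inequality $\tfrac{\mathrm{d}}{\mathrm{d}t}N(t)\le C\,N(t)$ in one line; you instead pass to the mild formulation by peeling off the local multiplier $-D+M(t,x)$ via its fundamental matrix $A(t,s,x)$ and then bound the resulting Duhamel integral. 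Your route has the conceptual advantage of cleanly isolating the pointwise-in-$x$ part (which preserves a.e.\ nullity trivially) from the nonlocal coupling, and it sidesteps the minor regularity issue of differentiating $|u_{i}|$; the paper's route is shorter and avoids introducing the propagator.

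One small slip: after bounding $\|\mathcal{K}(s,x-y)u(s,y)\|_{\mathbb{R}^{m}}$ by $\max_{i}\mathcal{K}_{i}(s,x-y)\,\|u(s,y)\|_{\mathbb{R}^{m}}$ you cannot directly invoke $\int_{\Omega_{0}}\mathcal{K}_{i}(s,x-y)\,\mathrm{d}x\le 1$ for a single index $i$, since the maximizing $i$ depends on $(s,x,y)$. Insert $\max_{i}\le\sum_{i}$ first; this produces a harmless extra factor $m$ that is absorbed into your constant $C_{1}$.
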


\begin{proof}
Let $u:=(u_1,\dots,u_m)^T$ and define the vector-valued sign function ${\rm sgn}(u):=({\rm sgn}(u_1),\dots,{\rm sgn}(u_m))^T$.
Denote $\tilde{u}:=\sum_{i=1}^m|u_i|$. Taking the inner product of the governing equation for $u$ with ${\rm sgn}(u)$,
and integrating the resulting equation over $\Omega_0$, we obtain
\begin{align*}
& \frac{\mathrm{d}}{\mathrm{d}t}\int_{\Omega_0}\tilde{u}(t,x)\,\mathrm{d}x
\\[2mm]
& \le \sum_{i=1}^md_i\left( \int_{\Omega_{0}} \int_{\Omega_{0}}\mathcal{K}_i(t,x-y)\left| u_i(t,y)\right| \,\mathrm{d}y\mathrm{d}x
    -\int_{\Omega_{0}}  u_i{\rm sgn}(u_i)\,\mathrm{d}x\right)+C\int_{\Omega_0}\tilde{u}\,\mathrm{d}x
\\[2mm]
& \le \sum_{i=1}^md_i\left( \int_{\Omega_{0}}|u_i(t,y)|\,\mathrm{d}y- \int_{\Omega_{0}}|u_i(t,x)|\,\mathrm{d}x\right)
    +C\int_{\Omega_0}\tilde{u}\,\mathrm{d}x
\\[2mm]
&  =  C\int_{\Omega_0}\tilde{u}(t,x)\,\mathrm{d}x,
\end{align*}
where $C>0$. By the Gronwall inequality, it follows that $\int_{\Omega_0}\tilde{u}(t,x)\,\mathrm{d}x=0$ for $t\in (0,T_{0}]$.
This completes the proof.
\end{proof}

\begin{lemma}[Maximum principle]\label{le2.2}
Assume that {\rm\textbf{(K)}} holds and $T_0>0$. Let $u\in C([0,T_{0}],\hat X)$ satisfy that $\frac{\partial u}{\partial t}(t,x)$
exists for $t\in[0,l)$ and $x\in \overline \Omega_{0}$. Moreover, $u$ satisfies the differential inequality:
$$
\dfrac{\partial u}{\partial t}\ge  D\displaystyle\int_{\Omega_{0}}\mathcal{K}(t,x-y)u(t,y)\,\mathrm{d}y-Du(t,x) +M(t,x)u(t,x),\quad  t\in (0,T_{0}], x\in \overline{\Omega}_{0}.
$$
\begin{itemize}
  \item [(1)] If $u(0,\cdot)\in\hat X_{+}$, then $u(t,\cdot)\in\hat X_{+}$ for any $t\in\left(0,T_{0}\right]$.
  \item [(2)] If $u(0,\cdot)\in\hat X_{+}\setminus\hat{\mathbf 0}$, then $u(t,\cdot)\in \mathrm{Int} (\hat X_{+})$ for any $t\in\left(0,T_{0}\right]$.
  \item [(3)] If $u(0,x)\ge0$ a.e. in $\Omega_0$, then $u(t,x)\ge 0$ a.e. in $\overline{\Omega}_{0}$ for any $t\in(0, T_0]$.
  \item [(4)] If $u(0,x)>0$ a.e. in $\Omega_0$, then $u(t,x)>0$ a.e. in $\overline{\Omega}_{0}$ for any $t\in(0, T_0]$.
\end{itemize}
\end{lemma}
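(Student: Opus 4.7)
The plan is to prove the almost-everywhere statements (3)--(4) first via the $L^{1}$ sign trick already used in Lemma \ref{le2.2-1}, and then upgrade them to the pointwise statements (1)--(2) by rewriting the differential inequality as a monotone Volterra integral inequality with an exponential integrating factor.

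For (3), I would set $u_i^-(t,x):=\max\{-u_i(t,x),0\}$ and $\tilde w(t,x):=\sum_{i=1}^m u_i^-$. Multiplying the $i$-th component of the differential inequality by $-\chi_{\{u_i<0\}}$, invoking cooperativity ($m_{ij}\ge 0$ for $i\ne j$) to handle the reaction cross terms, and integrating over $\Omega_0$ with the Fubini bound $\int_{\Omega_0}\mathcal K_i(t,x-y)\,dx\le 1$ gives
\begin{equation*}
\frac{d}{dt}\int_{\Omega_0}\tilde w(t,x)\,dx\le C\int_{\Omega_0}\tilde w(t,x)\,dx,
\end{equation*}
so Gronwall forces $\tilde w\equiv 0$ a.e.\ in $\overline\Omega_0$. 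For (4), I would combine this a.e.\ nonnegativity with the irreducibility of $M$: if $u(t_0,\cdot)$ vanished on a set of positive measure, the pointwise equation at such points would force both the nonlocal convolutions and the coupled components to vanish as well, and propagating this through the evolution would contradict $u(0,\cdot)>0$ a.e.

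For (1), I would choose $\lambda>0$ large enough that $N(t,x):=\lambda I-D+M(t,x)$ has nonnegative entries (possible since $M$ is bounded and cooperativity handles the off-diagonal ones). Multiplying $\partial_t u+\lambda u\ge D\int_{\Omega_0}\mathcal K u+Nu$ by $e^{\lambda t}$ and integrating in $t$ yields
\begin{equation*}
u(t,x)\ge e^{-\lambda t}u(0,x)+\int_0^t e^{-\lambda(t-s)}\Bigl[D\int_{\Omega_0}\mathcal K(s,x-y)u(s,y)\,dy+N(s,x)u(s,x)\Bigr]ds=:F[u](t,x),
\end{equation*}
where $F$ is an order-preserving affine Volterra operator on $C([0,T_0],\hat X)$. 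A Banach fixed-point argument produces the unique solution $v$ of $v=F[v]$ with $v(0,\cdot)=u(0,\cdot)$; the monotone iteration $v^{(0)}(t,x):=e^{-\lambda t}u(0,x)\in\hat X_+$, $v^{(k+1)}:=F[v^{(k)}]$, keeps every iterate in $\hat X_+$, so $v(t,\cdot)\in\hat X_+$ for every $t\in[0,T_0]$. Setting $\psi:=u-v$, subtraction gives $\psi\ge A\psi$ with $A$ the linear Volterra part of $F$; iterating yields $\psi\ge A^k\psi$ for every $k$, and the standard Volterra bound $\|A^k\|_{\mathcal{L}(C([0,T_0],\hat X))}\le(CT_0)^k/k!$ drives $A^k\psi\to 0$ pointwise. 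Hence $\psi\ge 0$, i.e., $u\ge v\ge 0$ on $[0,T_0]\times\overline\Omega_0$.

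For (2), part (1) already gives $u\ge 0$ pointwise. Since $u(0,\cdot)\in\hat X_+\setminus\hat{\mathbf 0}$, there exist $i_0$ and a measurable set $E_0\subset\Omega_0$ of positive measure on which $u_{i_0}(0,\cdot)>0$. The continuity of $\mathcal K$ combined with $\mathcal K_i(t,0)>0$ yields a ball $B_\delta$ on which $\mathcal K_i(s,\cdot)\ge c_0>0$; iterating the nonlocal spreading in the integral form above over finitely many time steps covers the bounded domain $\overline\Omega_0$ and produces a uniform lower bound $u_{i_0}(t,\cdot)\ge\eta(t)>0$. The irreducibility of $N$ then transfers this positivity to every other component through the pointwise ODE system driven by $N$, yielding $u(t,\cdot)\ge\delta(t)\mathbf 1$ and hence $u(t,\cdot)\in\mathrm{Int}(\hat X_+)$. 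The main obstacle throughout is the low regularity of $u$ in $x$ --- it is only bounded measurable --- which invalidates classical first-touching-zero arguments; the two-level scheme above ($L^1$ a.e.\ bounds followed by monotone Volterra upgrades) is precisely the device I would use to circumvent this.
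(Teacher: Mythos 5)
Your parts (1) and (3) are correct and take a genuinely different route from the paper. For (1), the paper rescales by $e^{(\bar h+\bar d)t}$ and runs a direct infimum--contradiction on a short time interval $[0,t_0]$ with $t_0=\tfrac{1}{2(2\bar h+\bar d)}$, then iterates; your Volterra reformulation $u\ge F[u]$ with a positive affine operator, a nonnegative fixed point $v$, and the bound $\|A^k\|\le (CT_0)^k/k!$ reaches the same conclusion in one pass and is arguably cleaner. For (3), the paper does not redo an energy estimate: it corrects $u(0,\cdot)$ on a null set by adding a solution $v$ with $v(0,\cdot)\in\hat{\mathbf 0}$ (which stays in $\hat{\mathbf 0}$ by Lemma \ref{le2.2-1}) and then quotes case (1); your negative-part $L^1$ estimate is a self-contained alternative at the same level of rigor as the paper's Lemma \ref{le2.2-1}. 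Both integration steps (yours and the paper's) rest on the same FTC-for-everywhere-differentiable-functions convention, so no complaint there.

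There is, however, a genuine gap in your part (2) (and it propagates to your sketch of (4)). The hypothesis on $M$ grants irreducibility only at a single point $x_0$: ``there exists a point $x_0$ such that $M(t,x_0)$ is irreducible for all $t\ge0$.'' Your step ``the irreducibility of $N$ then transfers this positivity to every other component through the pointwise ODE system driven by $N$'' is therefore unavailable at generic $x$, where $M(t,x)$ may be reducible and the components genuinely decouple pointwise. The transfer between components must be routed through $x_0$ (where your spatial-spreading argument has already made $u_{i_0}$ positive) and the resulting positivity of the other components must then be re-spread spatially via the kernel; moreover, irreducibility of $M(t,x_0)$ for each fixed $t$ does not tell you which off-diagonal entries are positive at which times, so a forward propagation along a fixed chain of indices does not work directly --- this is precisely why the paper argues by contradiction at a specific time $\tilde t_*$ and point $x^*$ using the index sets $\mathcal I,\mathcal J$. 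Relatedly, in (4) your claim that vanishing of $u_i(t_0,\cdot)$ on a positive-measure set ``would force the nonlocal convolutions to vanish'' is unjustified as stated: the differential inequality only gives a lower bound on $\partial_t u_i$, so you must first observe that $t_0$ is a minimum in time of $u_i(\cdot,x)$ (using the a.e.\ nonnegativity from (3) together with continuity in $t$) to get $\partial_t u_i(t_0,x)\le 0$ before you can squeeze the convolution to zero. Both defects are repairable, but as written these two steps do not go through.
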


\begin{proof}
Let
$$
\bar h:= \sum\limits_{i,j}\max\limits_{(t,x)\in [0,T_0]\times\overline\Omega_{0}} \left| m_{ij}(t,x)\right| <+\infty
\quad \textrm{and}\quad \bar d:= \max\{d_1,\dots,d_m\}.
$$
Set  $ w(t,x):=e^{ ( \bar h+\bar d) t}u(t,x)$. Then for $t\in (0,T_{0}]$ and $x\in\overline{\Omega}_{0}$, it satisfies
\begin{equation}\label{inequality-w}
\dfrac{\partial w}{\partial t}\ge D\displaystyle\int_{\Omega_{0}}\mathcal{K}(t,x-y)w(t,y)\,\mathrm{d}y-Dw +(\bar h +\bar d)w+Mw.
\end{equation}

{\bf (1)}
Let $u(0,x)\ge0$ in $\Omega_0$. It then suffices to verify that $w(t,x)\ge 0$ for all
$(t,x)\in[0,T_{0}]\times \overline{\Omega}_{0}$. Suppose that
$$
\underline w:=\inf_{(t,x)\in [0,t_{0}]\times\overline\Omega_{0},1\le i\le m}w_{i}(t,x)<0,
$$
where $t_{0}= \frac{1}{2(2\bar h +\bar d)}$. Thus,  there exists
$\left\{(t_{k},x_{k})\right\}^{\infty}_{k=1}\in\left[0,t_{0}\right]\times\overline\Omega_{0}$
such that $w_{i_{0}}(t_{k},x_{k}) \to \underline w$  as $k \to +\infty$ for some $i_{0}$.
Integrating the $i_0$-th inequality of \eqref{inequality-w} over $[0,t_{k}]$, we have
$$
w_{i_{0}}(t_{k},x_{k})\ge w_{i_{0}}(0,x_{k})+t_{0}\left( \bar d + 2 \bar h\right) \underline w .
$$
Letting $k\to +\infty$, then $\underline w\ge t_{0}\left( \bar d+ 2 \bar h \right)\underline w$,
which is a contradiction. Therefore, $w(t,x)\ge0$ on $[0,t_{0}]\times\overline\Omega_{0}$.
Repeating this arguments, we conclude that  $w(t,x)\ge 0$ for all $(t,x)\in[0,T_{0}]\times\overline\Omega_{0}$.

{\bf (2)}
Let $u(0,\cdot)\in\hat X_{+}\setminus\hat{\mathbf 0}$. Assume that there exists $(\tilde t_{\ast},\tilde x_{\ast})\in \left( 0,T_{0}\right] \times \overline\Omega_{0}$
such that $w_{i_{0}}(\tilde t_{\ast},\tilde x_{\ast})=0$ for some $i_{0}$. Notice that
$$
\frac{\partial w_{i_{0}}}{\partial t} (\tilde t_{\ast},\tilde x_{\ast})\le  0 \quad\textrm{and}\quad\mathcal{K}_{i_{0}}(\tilde t_{\ast},0)>0.
$$
Using the cooperative property of $M$, we deduce that $w_{i_{0}}(\tilde t_{\ast},x)=0$ almost everywhere in
$\overline\Omega_{0}$. To clarify which components of $w$ are equal to zero almost everywhere at time
$\tilde t_{\ast}$, we define the following index set:
$$
\mathcal{I}:=\left\{i\in\{1,\dots,m\}: w_{i}(\tilde t_{\ast},x)=0 \textrm{ almost everywhere in } \overline\Omega_{0}\right\}.
$$
Obviously, $\mathcal{I}\ne\emptyset$, and the complementary set $\mathcal{J}:= \left\lbrace 1,\dots,m\right\rbrace \setminus\mathcal{I}$.
In view of the above discussion, we can derive that $w_{j}(\tilde t_{\ast},x)>0$ for all $x\in \overline\Omega_{0}$
if $j\in \mathcal{J}$. Since $M(t,x)$ is a continuous matrix with respect to $t$ and $x$, and $M(t,x_{0})$ is
irreducible for all $t\ge 0$, there exists some point $x^{\ast}$ such that $M(\tilde t_{\ast},x^{\ast})$ is irreducible
and $w_{i}(\tilde t_{\ast},x^{\ast})=0$ for all $i\in \mathcal{I}$. Hence, we have
$$
0\ge  \frac{\partial w_{i}}{\partial t}(\tilde t_{\ast},x^{\ast})= \sum_{j\in \mathcal{J}} m_{ij}(\tilde t_{\ast},x^{\ast})w_{j}(\tilde t_{\ast},x^{\ast}),\quad i\in \mathcal{I}.
$$
This implies
$m_{ij}(\tilde t_{\ast},x^{\ast})=0 $ for  $ i\in \mathcal{I},\, j\in \mathcal{J},
$
which contradicts the irreducibility of $M(\tilde t_{\ast},x^{\ast})$  if $\mathcal{J} $ is nonempty.
Therefore, $\mathcal{I}=\left\lbrace 1,\dots,m\right\rbrace $. Since
$$
\frac{\partial w_{i}}{\partial t}(t,x)\ge (\bar d-d_i+\bar h-m_{ii}(t,x))w_{i}\ge0, \quad (t,x)\in (0,T_{0}]\times \overline{\Omega}_{0}, \, i\in\mathcal{I},
$$
it follows that  $w_{i}(\tilde t_{\ast},x)>0$ if $w_{i}(0,x)>0$.
Consequently, $u(0,x)(=w(0,x))=0$ almost everywhere, leading to a contradiction. Hence, we have proven that for each $i=1,\dots, m$,
$w_{i}(t,x)>0$ for $(t,x)\in (0,T_{0}]\times \overline{\Omega}_{0}$.
It follows that for any fixed $t\in\left(0,T_{0}\right]$ and each $i=1,\dots, m$,
$\mathcal{W}_{i}(t,x):= d_{i}\int_{0}^{t}\int_{\Omega_{0}}\mathcal{K}_{i}(s,x-y)w_{i}(s,y)\,\mathrm{d}y\mathrm{d}s$
is a continuous function on $\overline{\Omega}_{0}$ with a positive minimum. Integrating the governing equation
for $w_i$ over $[0,t]$, we obtain $w_{i}(t,\cdot)\ge \mathcal{W}_{i}(t,\cdot)$ on $\overline{\Omega}_{0}$,
which implies $w(t,\cdot)\in \mathrm{Int}(\hat X_{+})$.

{\bf (3)}
Let $u(0,x)\ge0$ a.e. in $\Omega_0$.
Set $\tilde{u}:=u+v$ such that $\tilde{u}(0,x)\ge0$ for any $x\in\Omega_0$, where $v:=v(t,x)$ is given by Lemma \ref{le2.2-1}.
Following the approach in case (1), we deduce that $\tilde{u}(t,x)\ge 0$ in $[0,T_{0}]\times \overline{\Omega}_{0}$.
Furthermore, by Lemma \ref{le2.2-1}, it holds that $v(t,x)= 0$ a.e. in $[0,T_0]\times\overline\Omega_{0}$. Hence, it follows
that $u(t,x)\ge 0$ a.e. in $[0,T_{0}]\times \overline{\Omega}_{0}$.

{\bf (4)} The proof is similar to that of Case (3) and is therefore omitted here.
\end{proof}

Now, we establish the comparison principle for \eqref{eq2.2} and prove its well-posedness.

\begin{theorem}[Comparison principle]\label{th2.2}
Assume that {\rm\textbf{(K)}} and {\rm\textbf{(G1)}} hold.
Let $\bar u, \underline u\in C([0,T_{0}],\hat X_+)$ be a supersolution and a subsolution of the equation associated with \eqref{eq2.2}
for some $T_{0}>0$. Denote $w(t,x):=\bar u(t,x)-\underline u(t,x)$.
Then the following statements are valid:
\begin{itemize}
  \item [(1)] If $w(0,\cdot)\in\hat X_{+}$, then $w(t,\cdot)\in\hat X_{+}$ for any $t\in\left(0,T_{0}\right]$.
  \item [(2)] If {\rm\textbf{(G2)}} holds and $w(0,\cdot)\in\hat X_{+}\setminus\hat{\mathbf 0}$, then $w(t,\cdot)\in \mathrm{Int} (\hat X_{+})$ for any $t\in\left(0,T_{0}\right]$.
  \item [(3)] If $w(0,x)\ge0$ a.e. in $\Omega_0$, then $w(t,x)\ge 0$ a.e. in $\overline{\Omega}_{0}$ for any $t\in(0, T_0]$.
  \item [(4)] If {\rm\textbf{(G2)}} holds and $w(0,x)>0$ a.e. in $\Omega_0$, then $w(t,x)>0$ a.e. in $\overline{\Omega}_{0}$ for any $t\in(0, T_0]$.
\end{itemize}
\end{theorem}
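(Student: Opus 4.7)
The strategy is to reduce the nonlinear comparison assertion to the linear Maximum Principle already established in Lemma~\ref{le2.2}. I would set $w := \bar u - \underline u \in C([0,T_0],\hat X)$ and subtract the defining inequalities for $\bar u$ and $\underline u$. Because $g\in C^{0,0,1}$ in the $u$-variable, the Newton--Leibniz formula yields
\begin{equation*}
g(t,x,\bar u(t,x)) - g(t,x,\underline u(t,x)) = M(t,x)\,w(t,x),\quad M(t,x):=\int_0^1 \mathcal{D}g\bigl(t,x,\underline u(t,x)+\theta\,w(t,x)\bigr)\,\mathrm{d}\theta.
\end{equation*}
Consequently, $w$ satisfies the linear differential inequality
\begin{equation*}
\frac{\partial w}{\partial t} \ge D\int_{\Omega_0}\mathcal{K}(t,x-y)w(t,y)\,\mathrm{d}y - D\,w + M(t,x)\,w,\qquad (t,x)\in(0,T_0]\times\overline\Omega_0,
\end{equation*}
which is precisely the form to which Lemma~\ref{le2.2} applies.

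Next I would verify that $M(t,x)$ satisfies the structural hypotheses needed in Lemma~\ref{le2.2}. Cooperativity is inherited from $\mathcal{D}g$ via assumption \textbf{(G1)}: each off-diagonal entry of the integrand is nonnegative along the segment joining $\underline u(t,x)$ and $\bar u(t,x)$, so the average $M(t,x)$ is cooperative. Under \textbf{(G2)}, I would argue that $M(t,x_0)$ remains irreducible for every $t\ge0$: since each matrix $\mathcal{D}g(t,x_0,u)$ in the integrand is cooperative and irreducible, any off-diagonal entry that is strictly positive at a single interior $\theta$ contributes strictly positively to the $\theta$-average, so the digraph associated with $M(t,x_0)$ inherits strong connectivity.

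With the linear differential inequality for $w$ and the cooperative (and, when needed, irreducible) nature of $M$ in hand, parts (1)--(4) of Theorem~\ref{th2.2} follow by invoking the corresponding parts of Lemma~\ref{le2.2}. Parts (1) and (3) require only cooperativity of $M$ together with the stated initial positivity of $w(0,\cdot)$ in the pointwise or a.e.\ sense, while parts (2) and (4) additionally rely on \textbf{(G2)} to furnish the irreducibility that powers the strong positivity conclusions already worked out within the proof of Lemma~\ref{le2.2}.

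The main obstacle I anticipate is handling the limited regularity of $M(t,x)$. Because $\bar u,\underline u\in C([0,T_0],\hat X)$ are only bounded and measurable in $x$, the matrix-valued function $M$ inherits only bounded measurability in $x$, whereas Lemma~\ref{le2.2} is literally stated with continuous $m_{ij}$. I would resolve this by observing that the arguments in Lemma~\ref{le2.2-1} and Lemma~\ref{le2.2} actually depend on $M$ only through bounded measurable estimates — the sign-function/Gronwall step for part (1), and the integrated lower bound via $\mathcal{W}_i$ for part (2) — so the conclusions extend to bounded measurable $M$ without additional continuity. Once this technical point is addressed, the four assertions of Theorem~\ref{th2.2} drop out as direct consequences of the linear maximum principle.
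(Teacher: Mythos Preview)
Your proposal is correct and follows essentially the same approach as the paper: the paper's proof simply notes that by \textbf{(G1)} there exists a cooperative matrix $H(t,x,\bar u,\underline u)$ with $g(t,x,\bar u)-g(t,x,\underline u)=H\,w$, and then applies Lemma~\ref{le2.2}. Your Newton--Leibniz construction of $M$ is exactly this $H$, and your treatment is in fact more careful---the paper does not address the regularity gap you flag (that $M$ is only bounded measurable in $x$), so your observation that the proof of Lemma~\ref{le2.2} requires only boundedness, together with the fact that \textbf{(G2)} gives irreducibility at \emph{every} $x$ (not just one $x_0$), is a genuine refinement.
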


\begin{proof}
By assumption \textbf{(G1)}, there exists $H(t,x,\bar u, \underline u) $ which is cooperative and satisfies
$g(t,x,\bar u)- g(t,x,\underline u)= H(t,x,\bar u, \underline u)w$.
Applying Lemma \ref{le2.2} to the system for $w$, we deduce the desired conclusions.
\end{proof}

It is worth noting that the above maximum and comparison principles remain valid on $[s,s+T_0]$ if we consider
$s$ as the initial time. This fact will be utilized when needed in the following proofs.

\begin{proposition}[Well-posedness]\label{2.1}
Assume that {\rm\textbf{(K)}} and {\rm\textbf{(G1)}} hold. For each $u_{0}\in \hat X_{+}^{v}$, the problem \eqref{eq2.2}
admits a unique solution $u\in C( \mathbb{R}_{+},\hat X_{+}^{v})$ such that
$\frac{\partial u}{\partial t}(t,x)$ exists for $t\ge0$ and $x\in \overline \Omega_{0}$.
\end{proposition}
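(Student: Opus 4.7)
The plan is to recast \eqref{eq2.2} as a fixed-point problem via the variation-of-constants formula and then apply the Banach contraction principle on a closed invariant subset of $C([0,T],\hat X)$, extending to $\mathbb{R}_{+}$ using the a priori bound $0\le u\le v$. Concretely, I would rewrite each component as $\frac{\partial u_i}{\partial t}+d_i u_i=G_i(t,x,u)$, where
$$G_i(t,x,u):=d_i\int_{\Omega_0}\mathcal{K}_i(t,x-y)u_i(t,y)\,\mathrm{d}y+g_i(t,x,u),$$
and convert this to the integral form $u(t,\cdot)=e^{-Dt}u_0+\int_0^t e^{-D(t-s)}G(s,\cdot,u(s,\cdot))\,\mathrm{d}s$, which defines a map $\mathcal{T}$.

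Next, I would fix a small $T>0$ and work on the closed convex set $\mathcal{X}_T^v:=\{u\in C([0,T],\hat X):0\le u(t,\cdot)\le v\text{ for all }t\in[0,T]\}$ with the sup norm. Invariance $\mathcal{T}(\mathcal{X}_T^v)\subset\mathcal{X}_T^v$ follows by noting that the constant functions $0$ and $v$ are a subsolution and a supersolution of \eqref{eq2.2} respectively — the former by $g(t,x,0)=0$, the latter by the sign condition $g_i(t,x,u)\le 0$ whenever $u_i=v_i$ in \textbf{(G1)} — so Theorem \ref{th2.2}(1) applied to the Picard iterates preserves the order. For the contraction, Lipschitz continuity of $g$ in $u$ on the compact set $\mathbb{M}$ (via $g\in C^{0,0,1}$) combined with $\|\int_{\Omega_0}\mathcal{K}_i(t,x-y)\varphi_i(y)\,\mathrm{d}y\|_{\hat X}\le\|\varphi_i\|_{\hat X}$ (since $\int_{\mathbb{R}^n}\mathcal{K}_i(t,\cdot)=1$) yields $\|G(s,\cdot,u_1)-G(s,\cdot,u_2)\|_{\hat X}\le L\|u_1(s,\cdot)-u_2(s,\cdot)\|_{\hat X}$, and choosing $T$ small makes $\mathcal{T}$ a contraction, producing a unique local solution.

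For global existence, since the a priori bound $0\le u(t,\cdot)\le v$ depends only on the invariant set and not on $T$, the local solution extends by iteration over $[kT,(k+1)T]$ to all of $\mathbb{R}_+$ without blow-up. Uniqueness in $C(\mathbb{R}_+,\hat X_+^v)$ follows by applying Theorem \ref{th2.2}(1) to both $\bar u-\underline u$ and $\underline u-\bar u$ for two candidate solutions on every bounded interval. Finally, pointwise existence of $\partial u/\partial t(t,x)$ is read off from the integral equation: once $u\in C(\mathbb{R}_+,\hat X)$, the map $s\mapsto G(s,x,u(s,x))$ is continuous in $s$ for each $x$ (by joint continuity of $\mathcal{K}_i$, $g$, and continuity of $u$ in the $\hat X$-norm), so the fundamental theorem of calculus applied coordinatewise yields the pointwise time derivative.

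The main obstacle is that $\hat X$ is the space of bounded \emph{measurable} (not continuous) functions, so the standard $C_0$-semigroup machinery does not apply verbatim: one cannot simply invoke analytic semigroup smoothing or strong continuity of translations. I would have to verify by hand that if $\varphi\in\hat X$ then $x\mapsto\int_{\Omega_0}\mathcal{K}_i(t,x-y)\varphi_i(y)\,\mathrm{d}y$ is again bounded and Lebesgue measurable (via continuity of $\mathcal{K}_i$ and dominated convergence), and that $t\mapsto\mathcal{T}u(t,\cdot)$ is continuous into $(\hat X,\|\cdot\|_{\hat X})$, which requires uniform control in $x$ of the integrand. Once these measurability and sup-norm continuity facts are established, the contraction argument proceeds in $\hat X$ exactly as in the continuous setting.
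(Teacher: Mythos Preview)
Your overall strategy --- Banach fixed point for local existence, then the a priori bound $0\le u\le v$ for global extension --- is the paper's approach as well; the paper uses the plain integral operator $\Gamma_{t_0}\phi=u_0+\int_0^t[\,\cdot\,]\,\mathrm{d}s$ rather than your variation-of-constants form, but that difference is cosmetic.

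The genuine gap is your invariance step. You claim $\mathcal{T}(\mathcal{X}_T^v)\subset\mathcal{X}_T^v$ by invoking Theorem~\ref{th2.2}(1) ``on the Picard iterates'', but that theorem compares a supersolution and a subsolution of the \emph{nonlinear equation}, and $\mathcal{T}u$ is neither: differentiating your integral formula gives
\[
\partial_t(\mathcal{T}u)_i=-d_i(\mathcal{T}u)_i+d_i\!\int_{\Omega_0}\mathcal{K}_i(t,x-y)u_i(t,y)\,\mathrm{d}y+g_i(t,x,u),
\]
where the nonlocal term and the reaction carry $u$, not $\mathcal{T}u$, so no differential inequality relating $\mathcal{T}u$ to $0$ or $v$ is available. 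Without an additional device (e.g.\ shifting by a large multiple of $u$ to make the integral map order-preserving, which you do not set up), $\mathcal{T}$ need not preserve the order interval. The paper avoids this entirely: it runs the contraction on the ball $Y^a_{t_0}=\{\|\phi\|\le 2\|v\|_{\mathbb{R}^m}\}$, where invariance for small $t_0$ is immediate from boundedness of the right-hand side, obtains a solution on a maximal interval $[0,T_{\max})$, and only \emph{then} applies Lemma~\ref{le2.2} to the actual solution (for which the differential equation genuinely holds) to conclude $0\le u\le v$, hence $T_{\max}=+\infty$. Swapping your order-interval invariance for the paper's ball-then-compare order closes the gap; the rest of your outline (Lipschitz contraction, uniqueness, pointwise $\partial_t u$ via the fundamental theorem of calculus, and the $\hat X$-measurability checks) is sound.
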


\begin{proof}
For any given $t_{0}>0$, let $Y_{t_{0}}:=C([0,t_{0}], \hat X)$ equipped with the norm
$\left\|u\right\|_{Y_{t_{0}}}:=\sup\limits_{t\in[0,t_0]}\left\|u(t,\cdot)\right\|_{\hat X}$.
Define an operator $\Gamma_{t_{0}}$ on $Y_{t_{0}}$ as
\[
 \begin{aligned}
\left[ \Gamma_{t_{0}}\phi\right](t,x) :=u_{0}(x)+\int_{0}^{t}\left[D\int_{\Omega_{0}}\mathcal{K}(s,x-y)\phi(s,y)\,\mathrm{d}y-D\phi(s,x)
+g(s,x,\phi(s,x))\right]\mathrm{d}s.
\end{aligned}
\]
Set
$a:=2\left\| v\right\|_{\mathbb{R}^m} $,
and define
$$
Y^{a}_{t_{0}}:=\left\lbrace \phi \in Y_{t_{0}}: \left\| \phi \right\| _{Y_{t_{0}}} \le a\right\rbrace.
$$
Obviously, there exists $\delta>0$ such that $\Gamma_{t_{0}}Y^{a}_{t_{0}} \subset Y^{a}_{t_{0}}$ for any $t_{0}\in [0,\delta ]$.
Let
$\bar d:= \max\left\{d_{1},\dots, d_{m}\right\}$. By assumption \textbf{(G1)}, it is straightforward to verify that
\[
\left\|\Gamma_{t_{0}}\phi_{1}-\Gamma_{t_{0}}\phi_{2}\right\|_{Y_{t_{0}}}\le(2\bar d+M)t_{0}\left\|\phi_{1}-\phi_{2}\right\|_{Y_{t_{0}}},
\]
where $M=M(\delta,a)>0$ is Lipschitz constant of $g$. Let $\delta_{0}:= \frac{1}{2(2\bar d+M)}$.
Then for any $t_{0}\in [0,\delta_{0}]$, $ \Gamma_{t_{0}}$ is a contraction mapping on $ Y^{a}_{t_{0}}$,
which implies the existence of a unique $u\in Y^{a}_{\delta_{0}}$ satisfying $\Gamma_{\delta_{0}}u =u $.
It follows that system \eqref{eq2.2} admits a unique local mild solution $u\in Y_{\delta_{0}} $ in the sense that
\[
u(t,x)= u_{0}(x)+\int_{0}^{t}\left[D\int_{\Omega_{0}}\mathcal{K}(s,x-y)u(s,y)\,\mathrm{d}y-Du(s,x) +g(s,x,u(s,x))\right]\mathrm{d}s.
\]
By iterating the above arguments, we can extend the solution to the maximal  interval of existence $\left[ 0,T_{\rm max}\right) $
satisfying
$$
\limsup\limits_{t\to T_{\rm max}^{-}}\left\| u(t,\cdot)\right\|_{\hat X}=+\infty \quad \textrm{if}\quad T_{\rm max}< +\infty.
$$
Applying Lemma \ref{le2.2}, we conclude that $u(t,x)\le v$ for all $(t,x)\in \left[ 0,T_{\rm max}\right) \times \overline\Omega_{0}$,
which yields $T_{\rm max}=+\infty$.
It is easy to verified that $u(t,x)$ is a uniformly continuous function with respect to $t$ for each fixed $x$,
and $\frac{\partial u}{\partial t}(t,x)$ exists for $t\ge0$ and $x\in \overline \Omega_{0}$.
\end{proof}

\begin{remark}
If $u_{0}\in X_{+}^{v} $, a similar argument shows that \eqref{eq2.2} admits a unique solution
$u\in  C^{1,0}(\mathbb{R}_{+}\times \overline\Omega_{0}, \mathbb{R}^{m} )$.
\end{remark}

\begin{remark}\label{pr3.2}
Since system \eqref{eq2.2} admits the comparison principle,
the arguments similar to those for \cite[Lemma 2.1]{MR4356639}
imply that any locally attractive solution of system \eqref{eq2.2} is Liapunov stable.
\end{remark}

\subsection{Threshold dynamics of \eqref{eq2.2} in the time-periodic case}

In this subsection, we propose a general
framework for rigorously analyzing the threshold dynamics of \eqref{eq2.2} in the time-periodic case.
We need the following additional assumptions on $\mathcal{K}$ and $g$:
\begin{itemize}
\item[\textbf{(T)}]  For each $1\le i\le m$, $\mathcal{K}_{i}(t,x)=\mathcal{K}_{i}(t+T,x)$
 and $g_{i}(t,x,u)=g_{i}(t+T,x,u)$;
\item[\textbf{\textbf{(G3)}}] $\alpha g(t,x,u)< g(t,x, \alpha u)$ for $u\in \mathbb{M}$
with $u\in\mathrm{Int}(\mathbb{R}^{m}_{+})$, $t\ge 0$, $x\in \overline \Omega_{0}$ and $\alpha\in (0,1)$;
\item[\textbf{\textbf{(G3$'$)}}] $\alpha g(t,x,u)\ll g(t,x, \alpha u)$ for $u\in \mathbb{M}$
with $u\in\mathrm{Int}(\mathbb{R}^{m}_{+})$, $t\ge 0$, $x\in \overline \Omega_{0}$ and $\alpha\in (0,1)$.
\end{itemize}
When $m=1$, the assumptions \textbf{(G3)} and \textbf{(G3$'$)} are equivalent.
Define
$$
\mathcal{C}_{T}:=\left\lbrace u\in C(\mathbb{R}\times \overline\Omega_{0},\mathbb{R}^{m}): \right. \left.  u(t,x)=u(t+T,x)\right\rbrace
$$
equipped with the norm
$\left\| u\right\|_{\mathcal{C}_{T}}:=\sup\limits_{t\in[0,T]} \left\| u(t,\cdot)\right\|_{X}$,
and the cone
$$
\mathcal{C}_{T}^{+}:=\left\lbrace  u\in \mathcal{C}_{T}: u(t,x)\ge 0 \right\rbrace.
$$
Let $\tilde X$ denote the set of all bounded and measurable functions $u: \mathbb{R}\times \overline\Omega_{0}\to\mathbb{R}^{m} $
that satisfy $u(t,x)=u(t+T,x)$ for all $(t,x)$. This space is equipped with norm
$\left\| u\right\| _{\tilde X}:=\sup_{t\in [0,T]} \left\| u(t,\cdot)\right\|_{\hat X}$.
The corresponding positive cone is defined as
$$
\tilde X_{+}:= \left\lbrace  u\in \tilde X:  u(t,x)\ge 0,\,(t,x)\in\mathbb{R}\times \overline\Omega_{0}\right\rbrace.
$$
Similarly, the partial order induced by the cone can be defined for $\mathcal{C}_{T} $ and $\tilde X$.

For any given $a\in \mathbb{R}^{m}$, let
$$
\tilde X_{+}^{a}:=\left\lbrace u\in \tilde X: 0\le  u(t,x)\le a,\,(t,x)\in\mathbb{R}\times \overline\Omega_{0}\right\rbrace.
$$
Define
\[
\left[ L_{T}u\right](t,x) := -\frac{\partial}{\partial t} u(t,x) +D\int_{\Omega_{0}}\mathcal{K}(t,x-y)u(t,y)\,\mathrm{d}y-Du+
\left( \dfrac{\partial g_{i}(t,x,0)}{\partial u_{j}}\right)_{m\times m}  u(t,x),
\]
for $u\in\mathcal{C}_{T}$.
We denote the spectrum of $L_{T}$ by $\sigma\left( L_{T} \right)$. The spectral bound of $L_{T}$ is given by
$s(L_{T}):={\sup} \left\lbrace \text{Re} \mu : \mu \in  \sigma \left( L_{T}\right)\right\rbrace$.
Let $\Phi(t,s)$ denote the evolution family on $X$ generated by the system
\[
{\begin{array}{*{20}{l}}
\dfrac{\partial u}{\partial t}=D\displaystyle\int_{\Omega_{0}}\mathcal{K}(t,x-y)u(t,y)\,\mathrm{d}y-Du
+\left( \dfrac{\partial g_{i}(t,x,0)}{\partial u_{j}}\right)_{m\times m} u, & t>0, x\in \Omega_{0}.
\end{array}}
\]
The exponential growth bound of evolution family $\Phi(t,s)$ is defined as
\[
\omega \left( \Phi \right) := \inf \left\{ {\tilde \omega :\exists M \ge 1 \textrm{ such that } \left\|
{\Phi\left( {t,s} \right)} \right\| \le M{e^{\tilde \omega \left( {t - s} \right)}}},\,  \forall t,s \in \mathbb{R},t \ge s\right\}.
\]
By applying \cite[Proposition 5.5 and Lemma 5.8]{MR2505085}, along with the discussion in \cite[Section 2.2]{MR4620151}, we have
\begin{equation}\label{eq2.0}
	s(L_{T}) =\omega(\Phi)=\frac{{\ln r\left( {\Phi\left( {T,0} \right)} \right)}}{T},
\end{equation}
where $r\left( \Phi\left( {T,0} \right)\right) $ denotes the spectral radius of $\Phi\left( {T,0} \right) $.

To investigate the threshold dynamics of system \eqref{eq2.2} in the time-periodic case, we propose the following
periodic problem associated with system \eqref{eq2.2}:
\begin{equation}\label{eq2.1}
\left\{
    {\begin{array}{*{20}{l}}
			\dfrac{\partial u}{\partial t}=D\displaystyle\int_{\Omega_{0}}\mathcal{K}(t,x-y)u(t,y)\,\mathrm{d}y-Du +g(t,x,u),
            & t\in \mathbb{R}, x\in \Omega_{0}, \\[4mm]
			u(t,x)=u(t+T,x),  & t\in \mathbb{R}, x \in \Omega_{0}.\\
	\end{array}} \right.
\end{equation}
Since the solution mapping of \eqref{eq2.2} lacks compactness in $C(\overline\Omega_{0},\mathbb{R}^{m})$,
only the pointwise convergence of solutions can be examined as $t\to+\infty$. This obstacle motivates us to
analyze \eqref{eq2.1} in the space $\tilde X_{+}^{v}$. Furthermore, in the subsequent proof, we introduce a
method for constructing subsolutions in the absence of a principal eigenvalue, which can also be extended to
address other similar problems where the principal eigenvalue does not exist.

\begin{lemma}\label{le2.1}
Assume that {\rm\textbf{(K)}}, {\rm\textbf{(G1)--(G3)}} and {\rm\textbf{(T)}} hold. The following statements are valid for system \eqref{eq2.1}:
\begin{enumerate}
	\item[(1)]  If $\omega \left( \Phi \right)<0$, then $0$ is the unique nonnegative solution in $\tilde X_{+}^{v}$;
	\item[(2)]  If $\omega \left(\Phi\right)=0$, and either {\rm\textbf{(G3$'$)}} holds, or $\omega \left(\Phi\right)=0$ is the principal eigenvalue,
                then $0$ is the unique nonnegative solution in $\tilde X_{+}^{v}$;
	\item[(3)]  If $\omega \left( \Phi \right)>0$, then \eqref{eq2.1} admits a unique positive solution $w^*\in\tilde X_{+}^{v}$.
Moreover, $w^*\in\mathrm{Int}(\mathcal{C}_{T}^{+})$.
\end{enumerate}
\end{lemma}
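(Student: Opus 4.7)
The plan is to exploit Theorem \ref{th2.2} (comparison principle) together with the sublinearity in \textbf{(G3)} and the identity $r(\Phi(T,0)) = e^{T\omega(\Phi)}$ from \eqref{eq2.0}. Two preliminary facts will be used throughout: first, the constant $v$ is a supersolution of \eqref{eq2.1}, because $\int_{\Omega_0}\mathcal{K}_i(t,x-y)\,\mathrm{d}y \le 1$ and \textbf{(G1)} forces $g_i(\cdot,\cdot,v)\le 0$; second, letting $\alpha \downarrow 0$ in \textbf{(G3)} shows that the linearization dominates the nonlinearity, $g(t,x,u) \le \mathcal{D}g(t,x,0)u$ for $u\in \mathbb{M}$, so any nonnegative periodic solution of \eqref{eq2.1} is automatically a subsolution of the linearized periodic system generated by $\Phi(t,s)$.

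For parts (1) and (2), given $u\in\tilde X_+^v$ solving \eqref{eq2.1}, the above remarks combined with the comparison principle and periodicity iterate to $u(0,\cdot)\le \Phi(T,0)^k u(0,\cdot)$ for every $k\in\mathbb{N}$. In part (1), $r(\Phi(T,0))<1$ forces the right-hand side to $0$, so $u\equiv 0$. In part (2) with $r(\Phi(T,0))=1$, I distinguish two subcases. If $0$ is a genuine principal eigenvalue with positive periodic eigenfunction $\psi\in\mathrm{Int}(\mathcal{C}_T^+)$, an $\alpha$-sup argument $\alpha^*:=\inf\{\alpha>0:\alpha\psi\ge u\}$ together with $L_T(\alpha^*\psi-u)\le 0$ and simplicity of $\psi$ yields $u\equiv 0$. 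If instead \textbf{(G3$'$)} holds and $u\not\equiv 0$, Theorem \ref{th2.2}(2) forces $u\in\mathrm{Int}(\tilde X_+)$, whence strict sublinearity on this compact-in-time cone yields a uniform gap $g(t,x,u)\le(\mathcal{D}g(t,x,0)-\eta I)u$ for some $\eta>0$. Consequently $u$ is a subsolution of a modified linear periodic system whose evolution family $\tilde\Phi$ satisfies $\omega(\tilde\Phi)\le -\eta<0$, and the argument of part (1) applied to $\tilde\Phi$ delivers the contradiction $u\equiv 0$.

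For part (3), the main task is existence of a positive periodic solution, since $\Phi(T,0)$ is in general non-compact on $\hat X$ and $L_T$ need not possess a principal eigenpair. I plan to adapt the perturbation trick of \cite{MR3000610,MR3957990}: add a rank-one compact perturbation $\epsilon\mathbf{1}_{\Omega_0}$ to each kernel $\mathcal{K}_i$, producing a perturbed evolution family $\Phi_\epsilon$ whose Poincar\'e map $\Phi_\epsilon(T,0)$ is compact. Krein--Rutman then furnishes a principal eigenvalue $\lambda_\epsilon$ and a positive $T$-periodic eigenfunction $\psi_\epsilon$; continuity of the spectral bound in $\epsilon$ ensures $\lambda_\epsilon>0$ for small $\epsilon$, and sublinearity with irreducibility \textbf{(G2)} makes $\delta\psi_\epsilon$ a subsolution of the perturbed nonlinear periodic equation for $\delta>0$ small. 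Monotone iteration between $\delta\psi_\epsilon$ and $v$ inside $\tilde X_+^v$ then yields a positive periodic solution $w_\epsilon^*$ of the perturbed system; a diagonal/pointwise extraction, coupled with uniform lower and upper bounds and the integral form of the solution map, lets me send $\epsilon\to 0$ to produce a positive periodic solution $w^*\in\tilde X_+^v$ of \eqref{eq2.1}, with $w^*\in\mathrm{Int}(\mathcal{C}_T^+)$ furnished by Theorem \ref{th2.2}(2) over one period. Uniqueness follows by the classical sublinear--cooperative argument: if $w^*,\tilde w$ are two such solutions, then $\alpha^*:=\sup\{\alpha\in(0,1]:\alpha w^*\le\tilde w\}$ must equal $1$, since $\alpha^*<1$ combined with \textbf{(G3)} would make $\alpha^*w^*$ a strict subsolution, and the strict comparison in Theorem \ref{th2.2}(2) would yield $\tilde w\ge(\alpha^*+\varepsilon)w^*$ for some $\varepsilon>0$, contradicting the definition of $\alpha^*$; symmetry then forces $w^*=\tilde w$.

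The hard part is the perturbation-to-limit step in part (3). Because $\tilde X$ consists of merely bounded measurable functions without any regularizing property, the passage $\epsilon\to 0$ cannot rely on Arz\'ela--Ascoli and must proceed via pointwise (or weak-$\ast$) extraction while preserving the integral form of \eqref{eq2.1}. The most delicate issue is maintaining a uniform positive lower bound on $w_\epsilon^*$ independent of $\epsilon$, so that the limit $w^*$ does not collapse to zero; this is precisely where the absence of a principal eigenvalue for the unperturbed operator manifests, and where I expect to need irreducibility from \textbf{(G2)} combined with careful quantitative control of how $\lambda_\epsilon$ and $\psi_\epsilon$ degenerate as $\epsilon\to 0$.
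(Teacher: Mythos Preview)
Your treatment of parts (1) and (2) aligns with the paper. One subtlety you skip in (2): a nontrivial $u\in\tilde X_+^v$ could have $u(t,\cdot)\in\hat{\mathbf 0}$ for every $t$ (positive only on a null set), in which case Theorem~\ref{th2.2}(2) does not apply and your claim ``$u\in\mathrm{Int}(\tilde X_+)$'' fails. The paper handles this residual case by noting that at any $x^*$ with $u(t^*,x^*)>0$ the integral term vanishes, so $t\mapsto u(t,x^*)$ solves a periodic cooperative ODE whose Poincar\'e map $Q_{x^*}(T)$ satisfies $r(Q_{x^*}(T))\le r(\Phi(T,0))=1$; threshold dynamics for that ODE force $u(t,x^*)\to 0$, contradicting periodicity.

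For part (3) your route genuinely differs from the paper's and has a gap. First, adding $\epsilon\mathbf{1}_{\Omega_0}$ to each kernel does \emph{not} make $\Phi_\epsilon(T,0)$ compact: the multiplication part $-Du+M(t,x)u$ of the generator persists, so the Poincar\'e map remains a compact perturbation of a multiplication operator, not itself compact, and Krein--Rutman is unavailable. What the perturbation \emph{does} achieve is to raise $r(\Phi_\epsilon(T,0))$ strictly above $\max_x r(Q_x(T))$ (the latter unaffected), so a principal eigenvalue exists by the criterion in \cite{MR3637938,MR3705788}---a different mechanism from the one you invoke. Second, and more seriously, you correctly flag the uniform-in-$\epsilon$ positive lower bound on $w_\epsilon^*$ as the crux but offer no way to obtain it; without it the limit may collapse to zero, and this is exactly the obstruction that makes the perturbation route hard in the non-compact setting.

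The paper sidesteps the limit entirely by directly constructing $\phi\in X_+^v$ with $\Phi(T,0)\phi\ge\phi$, after which monotone iteration between $\phi$ and $v$ (as in \cite{MR3968264,MR3000610}) yields $w^*$. The construction is a dichotomy. If $r(\Phi(T,0))>\max_x r(Q_x(T))$, the principal eigenpair of $L_T$ exists and a small multiple of the eigenfunction serves as $\phi$. If equality holds, then since $r(\Phi(T,0))>1$ there is a ball $B_{x_0,\varrho}$ on which $r(Q_x(T))>1$; the paper takes the continuous family of Perron vectors $\varphi(x)$ of the matrices $Q_x(T)$, sets $\psi(t,x)=e^{-(\ln r_0/T)t}Q_x(t)\varphi(x)$ with $r_0=\min_B r(Q_x(T))>1$, multiplies by a cutoff $\eta$ supported in the ball, and uses \textbf{(G3)} to make $\delta'\eta\psi$ a subsolution of the nonlinear problem, whence $\Phi(T,0)\delta'\eta\psi(0,\cdot)\ge\delta'\eta\psi(0,\cdot)$. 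This localized construction is what replaces the missing principal eigenvalue and is the technical point the introduction singles out as new.
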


\begin{proof}
It is evident that \eqref{eq2.1} admits at least one trivial solution. Let $u$ be a $T$-periodic solution of \eqref{eq2.1}
in $\tilde X_{+}^{v}$. The following proof is divided into three cases:
$\omega \left( \Phi \right)< 0$, $\omega \left( \Phi \right)= 0$, and $\omega \left( \Phi \right)> 0$.

{\bf Case 1}. When $\omega \left( \Phi \right)< 0$, we choose $\bar u\in X$ such that $u(0,x)\le \bar u(x)$.
By Theorem \ref{th2.2}, it holds that $u(t,x)\le u(t,x;\bar u)$.
Since $\alpha g(t,x,u)\le g(t,x,\alpha u)$ for $\alpha\in [0,1]$, applying Theorem \ref{th2.2} again yields
$u(t,x;\bar u)\le  \left[ \Phi(t,0)\bar u \right](x)$.
Thus, we have
$$
\left\| u(t, \cdot)\right\|_{\hat X}\le \left\| \Phi(t,0)\bar u \right\|_{X} \to 0 \quad \textrm{as}\quad t\to+\infty,
$$
which implies $u\equiv0$.

{\bf Case 2}. When $ \omega \left( \Phi \right)= 0$, define:
$$
\underline u_{i}:= \inf_{(t,x)\in [0,T]\times\overline\Omega_{0}}u_{i}(t,x)\ge 0,\quad i=1,\dots,m,
$$
and the index sets:
$$
\mathcal{I}:=\big\{i\in\{1,\dots,m\}: \underline u_{i}=0\big\}, \qquad
\mathcal{J}:=\big\{j\in\{1,\dots,m\}: \underline u_{j}>0\big\}.
$$
We claim that $\mathcal{J}=\emptyset$. Suppose, for contradiction, that this is not the case.
By Theorem \ref{th2.2} and the temporal periodicity of $u$,
there exists $c_1>0$ such that $\min\{\underline u_{1}, \dots,\underline u_{m}\}\ge c_{1}$.
Below, we proceed with the proof to derive a contradiction, assuming that either \textbf{(G3$'$)} holds
or $\omega \left(\Phi\right)=0$ is the principal eigenvalue.

Assume that \textbf{(G3$'$)} holds. Then there exists $c_2=c_2(c_1)>0$, independent of $t,x,i$ and $u$,
such that
$$
\sum_{j=1}^{n}\frac{\partial g_{i}(t,x,0)}{\partial u_{j}}u_{j}-g_{i}(t,x,u)\ge c_{2}, \quad i=1,\dots,m.
$$
We obtain
$$
\dfrac{\partial u}{\partial t}\le D\displaystyle\int_{\Omega_{0}}\mathcal{K}(t,x-y)u(t,y)\,\mathrm{d}y-Du
+\left( \dfrac{\partial g_{i}(t,x,0)}{\partial u_{j}}\right)_{m\times m}u-c_3u,
$$
where $c_{3}=\frac{c_{2}}{\left\| u\right\| _{\tilde X}}$. It follows that $r(e^{-c_{3}T}\Phi(T,0))< r(\Phi(T,0))=1$.
By the same arguments as those in the case where  $\omega(\Phi)<0$, we conclude $u\equiv0$,
which is a contradiction.

Next, we assume that $\omega(\Phi)=0$ is the principal eigenvalue.
Let $p$ be the principal eigenfunction associated with $\omega(\Phi)$ such that
$0\ll p\ll u$ in $\mathbb{R}\times\overline\Omega_0$.
By \textbf{(G3)}, we have
\begin{align*}
    0&=\omega(\Phi)p(t,x)
    \\
	&  =  \dfrac{\partial p}{\partial t}-D\displaystyle\int_{\Omega_{0}}\mathcal{K}(t,x-y)p(t,y)\,\mathrm{d}y+Dp
-\left( \dfrac{\partial g_{i}(t,x,0)}{\partial u_{j}}\right)_{m\times m}p
	\\
	&  \le  \dfrac{\partial p}{\partial t}-D\displaystyle\int_{\Omega_{0}}\mathcal{K}(t,x-y)p(t,y)\,\mathrm{d}y+Dp
-g(t,x,p)
\end{align*}
in $\mathbb{R}\times\overline\Omega_0$.
Define $\alpha^*:=\sup\{\alpha>0: \alpha u\le p\ \text{in}\ \mathbb{R}\times\overline\Omega_0\}$ and
set $\xi:=p-\alpha^*u$. Then $\alpha^*\in(0,1)$ and $\xi\in\tilde X_{+}^{v}$ satisfying
\begin{align}\label{contradiction}
\dfrac{\partial\xi}{\partial t}(t,x)&\ge D\displaystyle\int_{\Omega_{0}}\mathcal{K}(t,x-y)\xi(t,y)\,\mathrm{d}y-D\xi+g(t,x,p)-\alpha^*g(t,x,u) \nonumber
    \\
	&  > D\displaystyle\int_{\Omega_{0}}\mathcal{K}(t,x-y)\xi(t,y)\,\mathrm{d}y-D\xi+g(t,x,p)-g(t,x,\alpha^*u) \nonumber
    \\
	&  = D\displaystyle\int_{\Omega_{0}}\mathcal{K}(t,x-y)\xi(t,y)\,\mathrm{d}y-D\xi+H(t,x,p,\alpha^*u)\xi,
\end{align}
where $H(t,x,p,\alpha^*u)$ is $T$-periodic, cooperative and irreducible. Suppose there exists $t_0$ such that $\xi(t_0,\cdot)\in\hat X_{+}\setminus\hat{\mathbf 0}$.
By Lemma \ref{le2.2} and the temporal periodicity of $\xi$, it follows that $\xi(t,\cdot)\in \mathrm{Int} (\hat X_{+})$ for all $t\in\mathbb{R}$ which contradicts the definition of $\alpha^*$.
Otherwise, we must have $\xi(t,\cdot)\in\hat X_{+}\cap\hat{\mathbf 0}$ for all $t\in\mathbb{R}$.
In this case, there exists $(t_0,x_0)\in\mathbb{R}\times\overline\Omega_0$ such that $\xi(t_0,x_0)=0$. Considering
\eqref{contradiction} at $(t_0,x_0)$, we again arrive at a contradiction.

According to Theorem \ref{th2.2}, we conclude that for any $t\in\mathbb{R}$, $u(t,x)=0$ almost everywhere for
$x\in \overline\Omega_{0}$. Assume that there exists $(t^{\ast},x^{\ast})\in\mathbb{R}\times\overline\Omega_{0}$
such that $u(t^{\ast},x^{\ast})>0$. It follows that $u(t,x^{\ast})>0$ for all $t\in\mathbb{R}$ and satisfies
the following periodic differential equation:
$$
\left\{
    {\begin{array}{*{20}{l}}
		\dfrac{\mathrm{d}}{\mathrm{d}t}u(t,x^{\ast})=-Du +g(t,x^{\ast},u), & t>0,  \\[3mm]
		u(t,x^{\ast})=u(t+T,x^{\ast}),  & t\in \mathbb{R}.\\
    \end{array}}
\right.
$$
For each $x\in \overline\Omega_{0}$, define the operator:
$$
Q_{x}(t)v:=e^{-Dt+\int_{0}^{t}(\frac{\partial g_{i}(s,x,0)}{\partial u_{j}})_{m\times m}\,\mathrm{d}s } v,
\quad v \in \mathbb{R}^{m}.
$$
According to \cite[Lemma B.2]{MR3705788} and \eqref{eq2.0}, we have
$\max\limits_{x\in \overline\Omega_{0}}r(Q_{x}(T)) \le r(\Phi(T,0))=1$.
By \cite[Theorem 2.3.4]{MR3643081}, we can show that $\left\| u(t,x^{\ast})\right\|_{\mathbb{R}^m}\to0$
as $t\to+\infty$
which leads to a contradiction. Thus, we conclude that $u(t,x)\equiv 0$ for all $(t,x)\in\mathbb{R}\times \overline\Omega_{0}$.

{\bf Case 3}. When $\omega \left( \Phi \right)> 0$, we first show that the positive solution is unique, if it exists.
Suppose that $w$ is another positive solution of \eqref{eq2.1} in $\tilde X_{+}^{v}$. Based on the previous analysis,
it holds that
$$
\inf_{(t,x)\in [0,T]\times\overline\Omega_{0},i=1,\dots,m} w_{i}(t,x)>0.
$$
Define $\beta^*:=\sup\{\beta>0: \beta u\le w\ \text{in}\ \mathbb{R}\times\overline\Omega_0\}$ and
set $\tilde{\xi}:=w-\beta^*u$. Without loss of generality, assume $\beta^*\in(0,1]$.
Then, applying a similar argument on the equation of $\tilde{\xi}\in\tilde X_{+}^{v}$ as in Case 2, we obtain $\beta^*=1$,
that is, $u\le w$ in $\mathbb{R}\times\overline\Omega_0$. Similarly, we can also show $u\ge w$ in $\mathbb{R}\times\overline\Omega_0$.
Thus, the positive solution of \eqref{eq2.1} is unique.

Next, we prove the existence of a positive solution for \eqref{eq2.1}.
If there exists a function $\phi\in X_{+}^{v}$ such that $\Phi(T,0)\phi\ge \phi$, then, following the similar arguments
as in \cite[Theorem 2.2]{MR3968264}, it can be shown that \eqref{eq2.1} admits a positive solution $w \in\tilde X_{+}^{v}$.
Furthermore, in view of the discussion on \cite[Theorem 2.2]{MR3968264} (or \cite[Theorem E]{MR3000610}) and the
uniqueness of positive solution, we also have $w\in\mathrm{Int}(\mathcal{C}_{T}^{+})$.

We proceed to construct a function $\phi\in X_{+}^{v}$ such that $\Phi(T,0)\phi\ge \phi$.
If $r(\Phi(T,0))>\max\limits_{x\in \overline\Omega_{0}}r(Q_{x}(T)) $, based on the results of
\cite[Lemmas 2.5 and B.2]{MR3705788} (or \cite[Theorem 2.1]{MR3637938}),
we conclude that there exists $\phi\in\mathrm{Int}(\mathcal{C}_{T}^{+})$ with $\left\|\phi\right\|=1$ satisfying $
L_{T}\phi= s(L_{T})\phi$.
Since
$$
\lim_{\delta \to 0^{+}}\left\|  \frac{g(t,x, \delta\phi(t,x))}{\delta}- \left(\frac{\partial g_{i}(t,x,0)}{\partial u_{j}}\right)_{m\times m}\phi(t,x)\right\| _{\mathbb{R}^{m}}  = 0
$$
uniformly for $(t,x)\in [0,T]\times \overline\Omega_{0}$, it follows that there exists $\delta>0$ such that
$$
g(t,x,\delta \phi)\ge  \left(\frac{\partial g_{i}(t,x,0)}{\partial u_{j}}\right)_{m\times m}\delta\phi-s(L_{T})\delta\phi
$$
for all $(t,x)\in [0,T]\times \overline\Omega_{0}$. As a result, the following inequality holds:
\[
\delta \frac{\partial }{\partial t}\phi(t,x)\le D\delta\int_{\Omega_{0}}\mathcal{K}(t,x-y)\phi(t,y)\,\mathrm{d}y
- D\delta \phi + g(t,x,\delta \phi),\quad t\in[0,T].
\]
Thus, we conclude that $\Phi(T,0)\delta\phi(0,\cdot)\ge \delta\phi(0,\cdot)$.

If $r(\Phi(T,0))=\max\limits_{x\in \overline\Omega_{0}}r(Q_{x}(T))$, there exist $x_{0}\in \overline \Omega_{0}$
and $\varrho>0$ such that $r(Q_{x}(T))>1$  for all $x\in B_{x_{0},\varrho}$.
As stated in \cite[Section II]{MR1335452}, the function $r(x):=r(Q_{x}(T))$ is continuous,
and there exists a continuous function $\varphi(x)$ satisfying
$Q_{x}(T)\varphi(x)=r(x)\varphi(x)$ for $x\in \overline\Omega_{0}$.
Define $r_{0}:= \min\limits_{x\in B_{x_{0},\frac{2}{3}\varrho}}r(x)>1$,
and construct the cut-off function
\[
\eta(x):=  \begin{cases}
	1, &  \left\| x-x_{0}\right\|_{\mathbb{R}^n}\le \dfrac{1}{3}\varrho , \\[2mm]
	1- \dfrac{3}{\varrho}(\left\|x-x_{0} \right\|_{\mathbb{R}^n}- \dfrac{1}{3}\varrho ), & \dfrac{1}{3}\varrho\le \left\| x-x_{0}\right\|_{\mathbb{R}^n}\le \dfrac{2}{3}\varrho,\\[2mm]
	0, & \left\| x-x_{0}\right\|_{\mathbb{R}^n}\ge \dfrac{2}{3}\varrho.
\end{cases}
\]
By assumption \textbf{(G2)}, we can choose $\varphi(x)$ such that
$$
\max_{x\in B_{x_{0},\frac{2}{3}\varrho}}\left\| \varphi(x)\right\|_{\mathbb{R}^m} = 1
\quad\textrm{and}\quad
\min_{x\in B_{x_{0},\frac{2}{3}\varrho}}\left| \varphi_{i}(x)\right|>0 \quad\textrm{for all}\quad i.
$$
Define $\psi(t,x):= e^{-\frac{\ln r_{0}}{T}t}Q_{x}(t)\varphi(x)$.
Similarly, there exists $\delta'>0$ such that
$$
g(t,x,\delta' \psi)\ge  \left(\frac{\partial g_{i}(t,x,0)}{\partial u_{j}}\right)_{m\times m}\delta'\psi-\frac{\ln r_{0}}{T}\delta'\psi
$$
for all $(t,x)\in [0,T]\times \overline\Omega_{0}$. Since $\eta(x)\in [0,1]$, it follows that
$g(t,x,\delta' \eta\psi) \ge \eta g(t,x,\delta' \psi)$.
Thus, for $t\in [0,T]$ and $x\in \overline\Omega_{0}$, we have
\[
\delta'\eta(x) \frac{\partial }{\partial t}\psi(t,x)\le  D\delta'\int_{\Omega_{0}}\mathcal{K}(t,x-y)\eta(y)\psi(t,y)\,\mathrm{d}y- D\delta'\eta \psi + g(t,x,\delta' \eta \psi).
\]
Consequently, we obtain $\Phi(T,0)\delta'\eta\psi(0,\cdot)\ge \delta'\eta\psi(T,\cdot) \ge \delta'\eta\psi(0,\cdot)$.
This completes the proof.
\end{proof}

We are now ready to prove the main results of this section.
\begin{theorem}\label{th2.3}
Assume that {\rm\textbf{(K)}}, {\rm\textbf{(G1)--(G3)}} and {\rm\textbf{(T)}} hold. The following statements are valid for system \eqref{eq2.2}:
\begin{enumerate}
\item[(1)]  If $\omega(\Phi)<0$, then $0$  is globally asymptotically stable in $X_{+}^{v}$.
\item[(2)]  If $\omega \left(\Phi\right)=0$ and either {\rm\textbf{(G3$'$)}} holds or $\omega \left(\Phi\right)=0$ is the principal eigenvalue, then $0$ is globally asymptotically stable in $X_{+}^{v}$.
\item[(3)] If $\omega(\Phi)>0$, then $w^{\ast}$ is globally asymptotically stable in
    $X_{+}^{v}\setminus\left\lbrace 0\right\rbrace$, where $w^{\ast}$ is obtained in Lemma \ref{le2.1}(2).
\end{enumerate}
\end{theorem}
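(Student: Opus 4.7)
The approach is monotone iteration under the Poincaré map $P:u_0\mapsto u(T,\cdot;u_0)$ together with the classification of nonnegative $T$-periodic solutions from Lemma \ref{le2.1}, using Theorem \ref{th2.2} as the comparison engine. Case (1) is immediate from sublinearity: (G3) yields $g(t,x,u)\le\bigl(\frac{\partial g_i(t,x,0)}{\partial u_j}\bigr)_{m\times m}u$, so Theorem \ref{th2.2}(1) gives $u(t,\cdot;u_0)\le\Phi(t,0)u_0$. Since $\omega(\Phi)=s(L_T)<0$ forces $r(\Phi(T,0))=e^{\omega(\Phi)T}<1$ through \eqref{eq2.0}, the linear evolution decays exponentially and $\|u(t,\cdot;u_0)\|_{\hat X}\to 0$. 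Remark \ref{pr3.2} then promotes attraction to global asymptotic stability.

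For Case (3), first note that the constant vector $v$ is a supersolution of \eqref{eq2.2}: indeed $\int_{\Omega_0}\mathcal K_i(t,x-y)\,dy\le\int_{\mathbb R^n}\mathcal K_i(t,x-y)\,dy=1$ and $g_i(t,x,v)\le 0$ by (G1), so the RHS at $u\equiv v$ is nonpositive. Hence $P(v)\le v$ and $P^n(v)\downarrow\bar w$ pointwise. Reusing the construction in the proof of Lemma \ref{le2.1}(3), I fix $\phi_0\in X_+^v$ (either $\delta\phi(0,\cdot)$ if $r(\Phi(T,0))>\max_{x}r(Q_x(T))$, or $\delta'\eta\psi(0,\cdot)$ in the equality case) satisfying $P(\phi_0)\ge\phi_0$; then $P^n(\phi_0)\uparrow\underline w$. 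The crucial step is certifying $\bar w$ and $\underline w$ as $T$-periodic solutions in $\tilde X_+^v$: exploiting $T$-periodicity of $\mathcal K$ and $g$, the identity $u(nT+t,\cdot;v)=u(t,\cdot;P^n(v))$ together with the integral formulation from Proposition \ref{2.1} gives, via dominated convergence (all iterates bounded by $v$), that the pointwise monotone limit $w(t,x):=\lim_n u(t,x;P^n(v))$ solves the integral equation with initial datum $\bar w$, and $w(T,\cdot)=\lim_n P^{n+1}(v)=\bar w$ yields periodicity; the analogous passage handles $\underline w$. Lemma \ref{le2.1}(3) then forces $\underline w=\bar w=w^*(0,\cdot)$. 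For arbitrary $u_0\in X_+^v\setminus\{0\}$, Theorem \ref{th2.2}(2) yields $P(u_0)\in\mathrm{Int}(\hat X_+)$, so $\varepsilon\phi_0\le P(u_0)\le v$ for some small $\varepsilon>0$; (G3) propagates $P(\varepsilon\phi_0)\ge\varepsilon\phi_0$ from the $\phi_0$-case, and squeezing $P^n(\varepsilon\phi_0)\le P^{n+1}(u_0)\le P^n(v)$ combined with Remark \ref{pr3.2} delivers global asymptotic stability of $w^*$.

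Case (2) parallels the decreasing half of Case (3): $P^n(v)$ converges pointwise to some $\bar w\in\tilde X_+^v$, shown by the same dominated-convergence argument to be a $T$-periodic solution of \eqref{eq2.1}, hence $\bar w=0$ by Lemma \ref{le2.1}(2); comparison then gives $P^n(u_0)\le P^n(v)\to 0$ for every $u_0\in X_+^v$, and Remark \ref{pr3.2} again upgrades attraction to global asymptotic stability. The principal obstacle throughout is the lack of compactness of the nonlocal semigroup on $C(\overline\Omega_0,\mathbb R^m)$, which rules out standard omega-limit set and asymptotically autonomous semiflow arguments and forces the monotone pointwise limits to be certified as genuine periodic solutions via an explicit dominated-convergence pass through the integral equation. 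A secondary care point is handling the two spectral subcases in Lemma \ref{le2.1}(3)'s construction of $\phi_0$ uniformly when transferring the squeezing argument from $\phi_0$ to a generic initial datum.
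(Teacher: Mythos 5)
Your proposal follows essentially the same route as the paper: monotone iteration of the Poincar\'e map squeezed between a super-fixed point ($v$) and a sub-fixed point, certification of the pointwise monotone limits as $T$-periodic solutions of \eqref{eq2.1} via dominated convergence through the integral equation, identification of the limits by Lemma \ref{le2.1}, and Remark \ref{pr3.2} to upgrade attraction to stability; your case (1) shortcut via $u(t,\cdot;u_0)\le\Phi(t,0)u_0$ is exactly the domination argument the paper uses inside Lemma \ref{le2.1}, and your lower seed $\varepsilon\phi_0$ plays the role of the paper's $\delta w^{\ast}(0,\cdot)$. The one step you omit is the passage from pointwise to uniform convergence: global asymptotic stability in $X_{+}^{v}$ is with respect to the $C(\overline\Omega_0,\mathbb{R}^m)$-norm, and pointwise convergence of $P^{n}(v)$ and $P^{n}(\varepsilon\phi_0)$ does not by itself give this. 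Since the iterates are continuous, monotone in $n$, and converge to a continuous limit ($0$ or $w^{\ast}(t,\cdot)\in\mathrm{Int}(\mathcal{C}_T^{+})$), Dini's theorem supplies the uniform convergence, which is how the paper closes the argument; with that line added your proof is complete.
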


\begin{proof}
In the case where $ \omega(\Phi)\le 0 $, for any $ \phi\in X_{+}^{v}$, by comparison principle, we have
$0\le u(t,x;\phi)\le u(t,x;v)$.
It is clear that
$$
u(kT+t,x;v)\le u((k-1)T+t,x;v)\le v, \quad k\ge1.
$$
Then the sequence $\{u(kT+t,x;v)\}$ converges pointwise as $k\to+\infty$
to some function $u_{\ast}\in \tilde X_{+}^{v} $. By Lebesgue dominated convergence theorem, it is straightforward to verify that
$u_*$ is a nonnegative solution of \eqref{eq2.1}. Furthermore, by Lemma \ref{le2.1}, we deduce that $u_*(t,x)\equiv0$. Subsequently,
applying Dini's theorem, we conclude that $u(t,x;v)\to 0$
as $t \to +\infty$ uniformly for $x\in \overline\Omega_{0}$. Therefore, $u(t,x;\phi)$ also converges to $0$
uniformly for $x\in \overline\Omega_{0}$ as $t\to+\infty$. The desired result
immediately follows from \cite[Lemma 2.1]{MR4356639}.

In the case where $\omega(\Phi)>0$, by Lemma \ref{le2.1},  \eqref{eq2.1} admits a unique positive solution $w^{\ast}(t,x)$.
For any $\phi\in X_{+}^{v}$ with $\phi\neq 0$, Theorem \ref{th2.2} implies that
$u(T,x;\phi)\in\mathrm{Int}(X_{+})$.
Thus, there exists $0<\delta<1$ such that
$u(T,x;\phi)\ge \delta w^{\ast}(0,x)$.
Since $\delta w^{\ast}(t,x)$ is a subsolution of the equation corresponding to \eqref{eq2.2} due to \textbf{(G3)}, we have
$\delta w^{\ast}(t,x) \le u(t,x;\delta w^*(0,\cdot))$.
It follows that, for $k\ge 1$,
$$
u((k+1)T+t,x;\phi)\ge u(kT+t,x;\delta w^{\ast}(0,\cdot)) \ge u((k-1)T+t,x;\delta w^{\ast}(0,\cdot)).
$$
According to Lemma \ref{le2.1}, it is easy to see that the sequences $\{u(kT+t,x;v)\}$
and $\{u(kT+t,x;\delta w^*(0,\cdot))\}$ converge pointwise to the same function $w^*\in\mathrm{Int}(\mathcal{C}_{T}^{+})$ as $k\to+\infty$ .
Then, by Dini's theorem, we have
$$
\lim_{t\to + \infty}\left\| u(t,\cdot;v) -w^{\ast}(t,\cdot)\right\|_{X} =\lim_{t\to +\infty}\left\|u(t,\cdot;\delta w^{\ast}(0,\cdot)) -w^{\ast}(t,\cdot)\right\|_{X} =0,
$$
which yields
$$
\lim_{t\to +\infty}\left\| u(t,\cdot;\phi) -w^{\ast}(t,\cdot)\right\|_{X} =0.
$$
With Remark \ref{pr3.2}, we then obtain the global asymptotic stability of $w^{\ast}$.
\end{proof}

Next, we present a generalized version of Theorem \ref{th2.3}, which provides the global dynamics of \eqref{eq2.2} in $\hat X$.

\begin{theorem}\label{generalized-dynamical}
Assume that {\rm\textbf{(K)}}, {\rm\textbf{(G1)--(G3)}} and {\rm\textbf{(T)}} hold. The following statements are valid for system \eqref{eq2.2}:
\begin{enumerate}
\item[(1)]  If $\omega(\Phi)<0$, then $0$ is globally asymptotically stable in $\hat X_{+}^{v} $;
\item[(2)]  If $\omega \left(\Phi\right)=0$ and either {\rm\textbf{(G3$'$)}} holds or $\omega \left(\Phi\right)=0$ is the principal eigenvalue, then $0$ is globally asymptotically stable in $\hat X_{+}^{v}$.
\item[(3)] If $\omega(\Phi)>0$, then $w^{\ast}$  is globally asymptotically stable in $\hat X_{+}^{v}\setminus\hat{\mathbf 0}$,
 where $w^{\ast}\in\mathrm{Int}(\mathcal{C}_{T}^{+})$ as obtained in Lemma \ref{le2.1}(2).
\end{enumerate}
\end{theorem}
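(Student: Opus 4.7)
The plan is to mirror the proof of Theorem \ref{th2.3}, exploiting the continuous-initial-data result proved there together with the comparison principle in $\hat X$ (Theorem \ref{th2.2}). The key difficulty is that solutions in $\hat X_+^v$ need not be continuous in $x$, so Dini's theorem, which was instrumental in Theorem \ref{th2.3}, cannot be applied directly. My idea is to circumvent this by squeezing every $\hat X$-solution between two $X$-valued solutions to which Theorem \ref{th2.3} applies.

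For parts (1) and (2), fix $\phi\in\hat X_+^v$. The constant vector $v$, regarded as a continuous initial datum in $X_+^v$, yields a solution $u(\cdot,\cdot;v)\in C^{1,0}(\mathbb{R}_+\times\overline\Omega_0,\mathbb{R}^m)$ by the remark following Proposition \ref{2.1}. Since $\phi\le v$ pointwise, Theorem \ref{th2.2}(1) gives $0\le u(t,\cdot;\phi)\le u(t,\cdot;v)$ on $\overline\Omega_0$ for all $t\ge 0$, and Theorem \ref{th2.3}(1)--(2) delivers $\|u(t,\cdot;v)\|_X\to 0$ uniformly in $x$ as $t\to+\infty$; hence $\|u(t,\cdot;\phi)\|_{\hat X}\to 0$, with Liapunov stability inherited from Remark \ref{pr3.2}.

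For part (3), let $\phi\in\hat X_+^v\setminus\hat{\mathbf 0}$. Theorem \ref{th2.2}(2) yields $u(T,\cdot;\phi)\in\mathrm{Int}(\hat X_+)$, so there exists $c>0$ with $u(T,x;\phi)\ge c\,(1,\dots,1)^T$ for every $x\in\overline\Omega_0$. Since $w^*(0,\cdot)$ is continuous on the compact set $\overline\Omega_0$ and satisfies $w^*(0,\cdot)\le v$, I can pick $\delta\in(0,1)$ small enough that $\delta w^*(0,\cdot)\le u(T,\cdot;\phi)$ pointwise, with $\delta w^*(0,\cdot)\in X_+^v\setminus\{0\}$. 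Exploiting the $T$-periodicity of the equation and applying the comparison principle with initial time shifted to $T$, I obtain the sandwich
\[
u(t,\cdot;\delta w^*(0,\cdot))\ \le\ u(T+t,\cdot;\phi)\ \le\ u(t,\cdot;v),\qquad t\ge 0,
\]
on $\overline\Omega_0$. By Theorem \ref{th2.3}(3), both envelopes converge to $w^*(t,\cdot)$ in $X$-norm as $t\to+\infty$. A componentwise two-sided bound $a_t\le b_t\le c_t$ yields $\|b_t-w^*(t,\cdot)\|_{\hat X}\le\|a_t-w^*(t,\cdot)\|_X+\|c_t-w^*(t,\cdot)\|_X$, so $\|u(T+t,\cdot;\phi)-w^*(t,\cdot)\|_{\hat X}\to 0$, and by $T$-periodicity of $w^*$ this is equivalent to $\|u(t,\cdot;\phi)-w^*(t,\cdot)\|_{\hat X}\to 0$. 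Liapunov stability again follows from Remark \ref{pr3.2}.

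The main obstacle is the absence of continuity and of any compactifying effect in $\hat X$, which blocks the Dini-based argument used for Theorem \ref{th2.3}. The pivotal observation is that the strict positivity statement in Theorem \ref{th2.2}(2) already produces, after one period, a uniform pointwise lower bound on the $\hat X$-solution, allowing the comparison principle to trap it between two solutions whose continuous initial data bring them within the scope of Theorem \ref{th2.3}.
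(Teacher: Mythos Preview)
Your proposal is correct and follows essentially the same route as the paper: for $\omega(\Phi)\le 0$ you squeeze the $\hat X$-solution below $u(t,\cdot;v)$, and for $\omega(\Phi)>0$ you use Theorem \ref{th2.2}(2) to obtain strong positivity after one period and then sandwich between two $X$-valued solutions, exactly the reduction to Theorem \ref{th2.3} that the paper sketches. The only cosmetic difference is that you invoke Theorem \ref{th2.3} as a black box for the two envelope solutions, whereas the paper phrases it as repeating the argument of Theorem \ref{th2.3} with a positive time taken as the new initial time; the content is identical.
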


\begin{proof}
In the case where $\omega(\Phi)\le0$, the proof is similar to that of Theorem \ref{th2.3}.
For the case where $\omega(\Phi)>0$, according to Theorem \ref{th2.2}, the solution of system \eqref{eq2.2} in
$\hat X_{+}^{v}\setminus\hat{\mathbf 0}$ becomes strongly positive at any positive time.
We may consider a positive time as the initial time. The subsequent proof is analogous to that of
Theorem \ref{th2.3} and is therefore omitted.
\end{proof}

Finally, we consider a special case where $\mathcal{K} $ and $g$ are independent of $t$, specifically:
\begin{itemize}
	\item[\textbf{(T0)}]  $\mathcal{K}(t,x)=\mathcal{K}(x)$ and $g(t,x,u)=g(x,u)$.
\end{itemize}
In this case,  $L_{T}$ and problem \eqref{eq2.1} reduce to
\[
\left[  L_{0}u\right](x) := D\int_{\Omega_{0}}\mathcal{K}(x-y)u(y)\,\mathrm{d}y-Du+ \left( \dfrac{\partial g_{i}(x,0)}{\partial u_{j}}\right)_{m\times m}  u(x), \quad  u\in X,
\]
and
\begin{equation}\label{eq2.3}
D\int_{\Omega_{0}}\mathcal{K}(x-y)u(t,y)\,\mathrm{d}y-Du +g(x,u)=0,
\end{equation}
respectively. According to \cite[Theorem 3.14]{MR2505085}, \cite[Corollary 2.1]{MR4628895} or
\cite[Proposition 2.4]{MR1641201}, it holds that $s(L_{0})=s(-\frac{\partial }{\partial t} +L_{0})$.
Here, $-\frac{\partial }{\partial t} +L_{0} $ is considered as an operator on $\mathcal{C}_{T}$.
Based on Theorem \ref{th2.3}, we can deduce the following conclusions.

\begin{corollary}\label{c2.1}
Assume that {\rm\textbf{(K)}}, {\rm\textbf{(G1)--(G3)}} and {\rm\textbf{(T0)}} hold. The following statements are valid for system \eqref{eq2.2}:
\begin{enumerate}
\item[(1)]  If $s(L_{0})<0$, then $0$ is globally asymptotically stable for system \eqref{eq2.2} in $X_{+}^{v} $.
\item[(2)]  If $s(L_{0})=0$ and either {\rm\textbf{(G3$'$)}} holds or $s(L_{0})=0$ is the principal eigenvalue,  then $0$ is globally asymptotically stable for system \eqref{eq2.2} in $X_{+}^{v} $.
\item[(3)] If $s(L_{0})> 0  $, then problem \eqref{eq2.3} admits a unique positive solution $w^{\ast}$ in
    $\hat X_{+}^{v}$. Moreover, $w^{\ast}\in\mathrm{Int}(X_{+})$ and is globally asymptotically stable for system \eqref{eq2.2}
    in $X_{+}^{v}\setminus\left\lbrace 0\right\rbrace$.
\end{enumerate}
\end{corollary}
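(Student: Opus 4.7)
The plan is to reduce Corollary \ref{c2.1} to Theorem \ref{th2.3} by exploiting the autonomy hypothesis (T0), which makes (T) hold trivially for any period $T>0$, together with the spectral identity $s(L_{0})=s(-\partial_{t}+L_{0})=\omega(\Phi)$ recalled immediately before the statement. In particular, the sign of $s(L_{0})$ coincides with the sign of the exponential growth bound $\omega(\Phi)$ for the evolution family generated by the linearization of \eqref{eq2.2} at $0$, which is exactly the quantity driving Theorem \ref{th2.3}.

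For statements (1) and (2), I would invoke Theorem \ref{th2.3}(1) and Theorem \ref{th2.3}(2) directly: under (T0) the autonomous system is a $T$-periodic system for any $T>0$, so the global asymptotic stability of $0$ in $X_{+}^{v}$ under $s(L_{0})<0$, respectively under $s(L_{0})=0$ combined with (G3$'$) or the principal eigenvalue hypothesis, transfers verbatim. No additional argument is required here.

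For statement (3), I would first apply Theorem \ref{th2.3}(3), with any fixed period $T>0$, to obtain a unique positive $T$-periodic solution $w^{\ast}\in\mathrm{Int}(\mathcal{C}_{T}^{+})\cap\tilde X_{+}^{v}$, globally attracting in $X_{+}^{v}\setminus\{0\}$. To promote $w^{\ast}$ to a stationary solution of \eqref{eq2.3}, I would use the fact that, because $\mathcal{K}$ and $g$ do not depend on $t$, for every $s\in\mathbb{R}$ the shifted function $w^{\ast}(\cdot+s,\cdot)$ is again a positive $T$-periodic solution lying in $\tilde X_{+}^{v}$. The uniqueness part of Lemma \ref{le2.1}(3) then forces $w^{\ast}(t+s,x)=w^{\ast}(t,x)$ for all $s\in\mathbb{R}$, so $w^{\ast}$ is independent of $t$, belongs to $\mathrm{Int}(X_{+})$, and satisfies \eqref{eq2.3}. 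Uniqueness of $w^{\ast}$ as a positive solution of \eqref{eq2.3} in $\hat X_{+}^{v}$ follows because any such stationary $u^{\ast}$, viewed as a constant-in-time element of $\tilde X_{+}^{v}$, is a positive $T$-periodic solution of the associated periodic problem and therefore coincides with $w^{\ast}$ by Lemma \ref{le2.1}(3). The global attraction of $w^{\ast}$ in $X_{+}^{v}\setminus\{0\}$ then transfers directly from Theorem \ref{th2.3}(3).

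I expect the only conceptual hinge, rather than a genuine technical obstacle, to be the passage from periodic uniqueness to stationary uniqueness via the shift-invariance argument; once this is in place the remainder of the statement is a straightforward translation of Theorem \ref{th2.3} through the identity $s(L_{0})=\omega(\Phi)$, and no new estimates on $\mathcal{K}$ or $g$ are required.
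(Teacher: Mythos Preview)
Your proposal is correct and follows essentially the same approach as the paper, which simply invokes Theorem \ref{th2.3} via the spectral identity $s(L_{0})=s(-\partial_{t}+L_{0})=\omega(\Phi)$ recorded just before the statement. Your shift-invariance argument in part (3) to pass from the $T$-periodic solution to a stationary one, and the embedding of stationary solutions into $\tilde X_{+}^{v}$ to extract uniqueness from Lemma \ref{le2.1}(3), are natural details that the paper leaves implicit but that are indeed the intended mechanism.
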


\section{Asymptotically bounded domain}

In this section, we investigate the global dynamics of system \eqref{intro-4} when the domain \(\Omega_t\) is
asymptotically bounded, with a particular focus on two cases where \(\Omega_t\) converges asymptotically to a
fixed domain or a time-periodic domain.

\subsection{Asymptotically fixed domain}
In this subsection, we consider the case where the domain $\Omega_t$ asymptotically converges to a fixed domain.
Specifically, we assume that $\rho (t)$ satisfies the following asymptotic condition:
\begin{itemize}
\item [\textbf{(B1)}] $\lim\limits_{t\to+ \infty} \rho (t)= \rho_\infty>0,\,\,\, \lim\limits_{t\to+ \infty} \dot\rho(t)=0$.
\end{itemize}

 We start with the limiting system:
\begin{equation}\label{eq3.1}
\left\{
    {\begin{array}{*{20}{l}}
	   \dfrac{\partial u}{\partial t}=D\displaystyle\int_{\Omega_{0}}J_{\frac{1}{\rho_{\infty}}}(y-z)u(t,z)\,\mathrm{d}z-Du(t,y) +f(u), & t>0, y \in \Omega_{0},
        \\[4mm]
	   u(0,y)=u_{0}(y),  & y \in \Omega_{0}.\\
    \end{array}}
\right.
\end{equation}
Define
\[
\left[ Lu\right](y):= D\int_{\Omega_{0}}J_{\frac{1}{\rho_{\infty}}}(y-z)u(z)\,\mathrm{d}z-Du(y) +{\mathcal{D}f(0)}u(y), \quad  u \in X.
 \]
Let
\[
\left[\bar L_{\varepsilon}u\right](y):= D\int_{\Omega_{0}}\left[ J_{\frac{1}{\rho_{\infty}}}(y-z)+\varepsilon \right] u(z)\,\mathrm{d}z-Du(y)
    +\varepsilon u(y)+\mathcal{D}f(0)u(y), \quad  u \in X,
\]
and
\[
\left[ \underline L_{\varepsilon}u\right](y):= D\int_{\Omega_{0}}\left[ \max\left\{J_{\frac{1}{\rho_{\infty}}}(y-z)-\varepsilon,0\right\}\right] u(z)\,\mathrm{d}z-Du(y)- \varepsilon u(y) +\mathcal{D}f(0)u(y), \  u \in X.
\]
According to \cite{MR4628895,MR3637938}, $L$ admits principal eigenvalue, as do $\bar L_{\varepsilon}$ and $\underline L_{ \varepsilon}$.
Let $\lambda^{\ast}:=s(L), \bar \lambda_{\varepsilon}^{\ast}:=s(\bar L_{ \varepsilon})$ and
$\underline\lambda_{\varepsilon}^{\ast}:=s(\underline L_{ \varepsilon})$ which are the principal eigenvalues
of $L, \bar L_{ \varepsilon}$ and $\underline L_{ \varepsilon} $, respectively. It is easy to verify that
$\bar \lambda_{\varepsilon}^{\ast}\to \lambda^{\ast}$ and  $\underline\lambda_{\varepsilon}^{\ast}\to \lambda^{\ast}$
as $\varepsilon\to 0^{+}$.

Indeed, let $U(t), \bar U_{\varepsilon}(t)$ and $\underline U_{\varepsilon}(t)$ denote the $c_{0}$-semigroup generated
by $L, \bar L_{\varepsilon}$ and $\underline L _{\varepsilon}$, respectively. Obviously, for any given $T>0$,
$$
r(\underline U_{\varepsilon}(T)) \le r (U(T)) \le r (\bar U_{\varepsilon}(T)).
$$
According to \cite[Theorem 2.2]{MR4628895}, there exists $\phi\in\mathrm{Int}(X_{+})$ such that
$L\phi=\lambda^{\ast}\phi$. It follows that $U(T)\phi= r(U(T))\phi$ since $r(U(T))=e^{\lambda^{\ast}T}$.
By the variation of constants formula, we have
\[
 \left[ \underline U_{\varepsilon}(T)\phi\right] (x)\ge  \left[ U\phi\right](x)  - \varepsilon\int_{0}^{T}U(t-s) \left[ \int_{\Omega_{0}}\left[ \underline U_{\varepsilon}(s)\phi\right] (z)\,\mathrm{d}z + \left[ \underline U_{\varepsilon}(s)\phi\right] (\cdot)\right](x)\,\mathrm{d}s.
 \]
It is easy to verify that
\[
 \lim_{\varepsilon\to 0^{+}}\left\| \varepsilon\int_{0}^{T}U(t-s) \left[ \int_{\Omega_{0}}\left[ \underline U_{\varepsilon}(s)\phi\right] (z)\,\mathrm{d}z + \left[ \underline U_{\varepsilon}(s)\phi\right] \right]\,\mathrm{d}s\right\|_{X}= 0.
  \]
Hence, for any $\varepsilon^{\ast}>0$, there exists $\delta>0$ such that for all $0\le\varepsilon\le\delta$,
we have $\underline U_{\varepsilon}(T)\phi\ge \left( r(U(T))-\varepsilon^{\ast}\right) \phi$.
According to \cite[Lemma 2.4]{MR3992071}, one can deduce that
$r(\underline U_{\varepsilon}(T))\ge r(U(T))-\varepsilon^ {\ast}$ for all $ 0\le \varepsilon\le \delta$.
It follows that $\lim\limits_{\varepsilon\to 0^{+}} \underline\lambda_{\varepsilon}^{\ast}= \lambda^{\ast}$.
By the similar discussion or according to \cite[Section IX, P.497]{MR1335452}, one can show
$\bar\lambda_{\varepsilon}^{\ast}\to \lambda^{\ast}$ as $\varepsilon\to 0^{+}$.
According to Corollary \ref{c2.1}, we have the following results.

\begin{proposition}\label{pr3.1}
The following statements are valid for system \eqref{eq3.1}:
\begin{enumerate}
	\item[(1)]  If $\lambda^{\ast}\le0$, then $0$ is globally asymptotically stable in $X_{+}^{v} $.
	\item[(2)] If $\lambda^{\ast}> 0$, then system \eqref{eq3.1} admits a unique positive steady state
        $u^{\ast}$ in $\hat X_{+}^{v} $; moreover, $ u^{\ast}\in\mathrm{Int}( X_{+})$ and is globally asymptotically stable
        in $X_{+}^{v}\setminus\left\lbrace 0\right\rbrace$.
\end{enumerate}
\end{proposition}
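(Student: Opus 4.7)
The plan is to recognize system \eqref{eq3.1} as an autonomous instance of \eqref{eq2.2} and deduce the proposition directly from Corollary \ref{c2.1}. With the identifications $\mathcal{K}_{i}(t, y-z) \equiv J_{\frac{1}{\rho_{\infty}}}(y-z)$ and $g(t, y, u) \equiv f(u)$, assumption \textbf{(T0)} holds trivially, the generator $L_{0}$ coincides with $L$, and the steady-state equation \eqref{eq2.3} reduces to the equation for equilibria of \eqref{eq3.1}, so $s(L_{0}) = \lambda^{\ast}$.

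The first step is to verify the structural hypotheses of Corollary \ref{c2.1}. Assumption \textbf{(K)} follows from \textbf{(J)} together with the scaling \eqref{J-rho}: $J_{\frac{1}{\rho_{\infty}}}$ is continuous and nonnegative, $J_{\frac{1}{\rho_{\infty}}}(0) = \rho_{\infty}^{n} J(0) > 0$, and the mass normalization is preserved. Assumption \textbf{(G1)} is inherited from \textbf{(F)}: cooperativity of $\mathcal{D}f$ on $\mathbb{M}$ transfers verbatim, $f(0) = 0$ is assumed, and the sign condition $f_{i}(u) \le 0$ on the face $\{u_{i} = v_{i}\}$ follows by passing $t \to +\infty$ in the inequality $f_{i}(u) - n\dot\rho(t)/\rho(t)\, v_{i} \le -\sigma$ and using \textbf{(B1)}, which in fact gives $f_{i}(u) \le -\sigma$ there. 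Irreducibility \textbf{(G2)} and the strict sublinearity \textbf{(G3)} come directly from \textbf{(F)}.

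With the hypotheses in place, parts (1) and (2) follow by applying the three alternatives of Corollary \ref{c2.1} according to the sign of $\lambda^{\ast}$. When $\lambda^{\ast} < 0$ or $\lambda^{\ast} > 0$, the conclusions are immediate from Corollary \ref{c2.1}(1) and (3), respectively; the latter furnishes the unique positive steady state $u^{\ast} \in \mathrm{Int}(X_{+})$ in $\hat X_{+}^{v}$ together with its global asymptotic stability on $X_{+}^{v} \setminus \{0\}$. The critical case $\lambda^{\ast} = 0$ is the delicate one: Corollary \ref{c2.1}(2) requires either \textbf{(G3$'$)} or that $s(L_{0}) = 0$ be the principal eigenvalue. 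Here I would invoke the latter clause: as recorded just above the statement, the operator $L$ admits a principal eigenvalue according to \cite{MR4628895,MR3637938}, so $\lambda^{\ast} = 0$ is attained as an eigenvalue with a positive eigenfunction, and Corollary \ref{c2.1}(2) then yields the global asymptotic stability of $0$.

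The main obstacle in this plan is not the invocation of Corollary \ref{c2.1} but handling the critical case $\lambda^{\ast} = 0$, where \textbf{(F)} alone is not known to imply the stronger strict sublinearity \textbf{(G3$'$)}. It is precisely the availability of a principal eigenvalue for the autonomous operator $L$ that resolves this case, which is why the principal-eigenvalue clause was built into Corollary \ref{c2.1}(2); no additional argument beyond this identification is needed, and the perturbed operators $\bar L_{\varepsilon}$, $\underline L_{\varepsilon}$ introduced in the excerpt will be used later in the analysis of time-varying domains rather than in the present proof.
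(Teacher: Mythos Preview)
Your proposal is correct and follows exactly the paper's approach: the paper simply states ``According to Corollary \ref{c2.1}, we have the following results'' and gives no further proof, so your verification of \textbf{(K)}, \textbf{(G1)--(G3)}, \textbf{(T0)} and the invocation of the principal-eigenvalue clause in the critical case $\lambda^{\ast}=0$ is precisely the intended argument, spelled out in detail.
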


\begin{theorem}\label{th3.1}
Assume {\rm\textbf{(B1)}} holds. For system \eqref{intro-4}, the following statements are valid:
\begin{enumerate}
	\item[(1)]  If $\lambda^{\ast}\le 0 $, then $0$ is globally asymptotically stable in $ X_{+}^{v}$.
	\item[(2)] If $\lambda^{\ast}> 0  $, then $u^{\ast}$ is globally asymptotically stable in $ X_{+}^{v}\setminus\left\lbrace 0\right\rbrace $.
\end{enumerate}
\end{theorem}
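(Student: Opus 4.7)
The plan is to transfer Proposition~\ref{pr3.1} from the limiting system \eqref{eq3.1} to the nonautonomous system \eqref{intro-4} via comparison with perturbed \emph{autonomous} problems whose coefficients sandwich those of \eqref{intro-4} for large times. Assumption \textbf{(B1)} yields, for each $\varepsilon>0$, a time $T_{\varepsilon}>0$ such that for all $t\ge T_{\varepsilon}$,
\[
\max\{J_{\frac{1}{\rho_\infty}}(\cdot)-\varepsilon,0\}\le J_{\frac{1}{\rho(t)}}(\cdot)\le J_{\frac{1}{\rho_\infty}}(\cdot)+\varepsilon
\quad\text{uniformly on }\overline\Omega_0,
\]
and $-\varepsilon\le -n\dot\rho(t)/\rho(t)\le\varepsilon$. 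Consequently, any solution of the autonomous problem with kernel $J_{\frac{1}{\rho_\infty}}+\varepsilon$ and absorption $-\varepsilon$ (whose linearization at $0$ is $\bar L_{\varepsilon}$) is a supersolution of \eqref{intro-4} on $[T_{\varepsilon},\infty)$, while any solution of the autonomous problem with kernel $\max\{J_{\frac{1}{\rho_\infty}}-\varepsilon,0\}$ and absorption $+\varepsilon$ (linearization $\underline L_{\varepsilon}$) is a subsolution there. Since $\bar\lambda_{\varepsilon}^{\ast},\underline\lambda_{\varepsilon}^{\ast}\to\lambda^{\ast}$ as $\varepsilon\to 0^{+}$, the perturbed problems inherit, in the limit, the dynamical classification of \eqref{eq3.1}.

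If $\lambda^{\ast}<0$, I pick $\varepsilon>0$ with $\bar\lambda_{\varepsilon}^{\ast}<0$ and let $\bar\phi_{\varepsilon}\in\mathrm{Int}(X_{+})$ denote the associated positive principal eigenfunction. Strict subhomogeneity in \textbf{(F)}, obtained by letting $\alpha\to 0^{+}$ in $\alpha f(u)<f(\alpha u)$, gives $f(u)\le \mathcal{D}f(0)u$ on $[0,v]$, so $\bar u(t,y):=K\,e^{\bar\lambda_{\varepsilon}^{\ast}(t-T_{\varepsilon})}\bar\phi_{\varepsilon}(y)$ with $K$ large is a (linear) supersolution of \eqref{intro-4} on $[T_{\varepsilon},\infty)$, and Theorem~\ref{th2.2} forces $u(t,\cdot;u_{0})\to 0$ exponentially. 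In the critical case $\lambda^{\ast}=0$, one has $\bar\lambda_{\varepsilon}^{\ast}>0$ for every $\varepsilon>0$, so the exponential-decay argument fails. I would instead apply Proposition~\ref{pr3.1}(2) to the perturbed autonomous problem whose linearization is $\bar L_{\varepsilon}$, obtaining a globally attractive positive steady state $\bar u_{\varepsilon}^{\ast}\in\mathrm{Int}(X_{+})$ with $\bar u_{\varepsilon}^{\ast}\le v$; the sandwich then gives $\limsup_{t\to\infty}u(t,\cdot;v)\le\bar u_{\varepsilon}^{\ast}$ pointwise. Every subsequential limit of $\{\bar u_{\varepsilon}^{\ast}\}_{\varepsilon>0}$ as $\varepsilon\to 0^{+}$ is a nonnegative steady state of \eqref{eq3.1}, so by Proposition~\ref{pr3.1}(1) with $\lambda^{\ast}=0$ it must vanish, i.e.\ $\bar u_{\varepsilon}^{\ast}\to 0$. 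Dini's theorem then supplies uniform convergence $u(t,\cdot;v)\to 0$, and general $u_{0}\in X_{+}^{v}$ are handled via $u(t,\cdot;u_{0})\le u(t,\cdot;v)$.

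If $\lambda^{\ast}>0$, the upper-bound argument of the critical case still yields $\limsup u(t,\cdot;u_{0})\le\bar u_{\varepsilon}^{\ast}\to u^{\ast}$. For the matching lower bound, I pick $\varepsilon>0$ with $\underline\lambda_{\varepsilon}^{\ast}>0$ and let $\underline u_{\varepsilon}^{\ast}\in\mathrm{Int}(X_{+})$ be the positive steady state from Proposition~\ref{pr3.1}(2) applied to the $\underline L_{\varepsilon}$-perturbed system. Since \textbf{(F)} supplies the irreducibility invoked in \textbf{(G2)}, Theorem~\ref{th2.2} gives $u(T_{\varepsilon},\cdot;u_{0})\in\mathrm{Int}(X_{+})$ for every $u_{0}\in X_{+}^{v}\setminus\{0\}$, so there exists $\delta\in(0,1)$ with $u(T_{\varepsilon},\cdot;u_{0})\ge\delta\underline u_{\varepsilon}^{\ast}$. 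Strict subhomogeneity makes $\delta\underline u_{\varepsilon}^{\ast}$ a subsolution of the perturbed autonomous problem; the corresponding solution $\underline w(t)$ increases monotonically to $\underline u_{\varepsilon}^{\ast}$ and is simultaneously a subsolution of \eqref{intro-4} on $[T_{\varepsilon},\infty)$, whence $\liminf u(t,\cdot;u_{0})\ge\underline u_{\varepsilon}^{\ast}\to u^{\ast}$. Combining with the upper bound and Dini's theorem produces uniform convergence $u(t,\cdot;u_{0})\to u^{\ast}$, and Remark~\ref{pr3.2} upgrades attractivity to global asymptotic stability. The principal obstacle throughout is the critical subcase $\lambda^{\ast}=0$: no exponentially decaying linear supersolution exists, and the absence of regularizing effects for the nonlocal diffusion forbids the classical asymptotically-autonomous-semiflow machinery; the perturb-and-pass-to-the-limit device above is the essential workaround.
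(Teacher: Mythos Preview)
Your proposal is correct and follows essentially the same sandwich-between-perturbed-autonomous-problems strategy as the paper: compare \eqref{intro-4} on $[T_\varepsilon,\infty)$ with the autonomous systems having linearizations $\bar L_\varepsilon$ and $\underline L_\varepsilon$, apply Corollary~\ref{c2.1} to each, and let $\varepsilon\to 0^+$. The only point to tighten is your appeal to ``subsequential limits'' of $\{\bar u_\varepsilon^{\ast}\}$ and $\{\underline u_\varepsilon^{\ast}\}$---you have no compactness in $X$ to extract these; the paper instead notes that $\bar u_\varepsilon^{\ast}$ is nondecreasing and $\underline u_\varepsilon^{\ast}$ nonincreasing in $\varepsilon$, so pointwise limits exist outright, dominated convergence passes the steady-state equation to the limit, uniqueness (Proposition~\ref{pr3.1}) identifies the limit as $u^{\ast}$ (or $0$ when $\lambda^{\ast}=0$), and Dini's theorem then upgrades to uniform convergence.
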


\begin{proof}
For any $\varepsilon>0$, since
$J_{\frac{1}{\rho(t)}}(z) \to J_{\frac{1}{\rho_{\infty}}}(z)$ uniformly for
$z\in S:=\left\{x-y: x\in \overline{\Omega}_{0}, y\in\overline{\Omega}_{0}\right\}$ and
$\frac{\dot{\rho}(t)}{\rho(t)} \to 0$ as $t \to +\infty$, there exists $t_{\varepsilon}>0$ such that
$$
\max\left\lbrace J_{\frac{1}{\rho_{\infty}}}(z)-\varepsilon,0\right\rbrace \le J_{\frac{1}{\rho(t)}}(z) \le
J_{\frac{1}{\rho_{\infty}}}(z)+\varepsilon, \quad \text{and } -\varepsilon\le n\frac{\dot{\rho}(t)}{\rho(t)}\le \varepsilon,
$$
for all $z\in S, t\ge t_{\varepsilon}$.
Consider the following auxiliary problems
\begin{eqnarray}\label{eq3.2}
\begin{cases}
\, \dfrac{\partial \bar u_\varepsilon}{\partial t}=  D\displaystyle\int_{\Omega_{0}}\left[ J_{\frac{1}{\rho_{\infty}}}(y-z)+\varepsilon \right]&{\hspace{-9pt}}\bar u_\varepsilon(t,z)\,\mathrm{d}z-D\bar u_\varepsilon(t,y)
\\
&\ \ \ +\varepsilon \bar u_\varepsilon(t,y) +f(\bar u_\varepsilon), \qquad t>t_{\varepsilon}, y \in \Omega_{0},
		\\[2mm]
		\, \bar u_\varepsilon(t_{\varepsilon},y)= u(t_{\varepsilon},y), &{\hspace{133pt}} y \in \Omega_{0}.
	\end{cases}
\end{eqnarray}
and
\begin{eqnarray}\label{eq3.3}
\begin{cases}
\, \dfrac{\partial \underline u_\varepsilon}{\partial t}=D\displaystyle\int_{\Omega_{0}}\left[ \max\left\lbrace  J_{\frac{1}{\rho_{\infty}}}(y-z)-\varepsilon,0\right\rbrace \right]&{\hspace{-9pt}}\underline u_\varepsilon(t,z)\,\mathrm{d}z-D\underline u_\varepsilon(t,y)
\\
&\ \  -\varepsilon \underline u_\varepsilon(t,y) +f(\underline u_\varepsilon), \quad t>t_{\varepsilon}, y \in \Omega_{0},
		\\[2mm]
		\, \underline u_\varepsilon(t_{\varepsilon},y)= u(t_{\varepsilon},y), &{\hspace{118pt}} y \in \Omega_{0}.
	\end{cases}
\end{eqnarray}
Let $\bar u_\varepsilon(t,y) $ and $\underline u_\varepsilon(t,y)$ denote the solutions of \eqref{eq3.2} and \eqref{eq3.3}, respectively.
By comparison principle, we have
\begin{equation}\label{eq3.4}
\underline u_{\varepsilon}(t,y)\le u(t,y)\le \bar u_{\varepsilon}(t,y)  \text{ for } y\in \overline{\Omega}_{0}, t\ge t_{\varepsilon}.
\end{equation}

When $\lambda^{\ast}>0$, there exists $\varepsilon_{0}>0$ such that
$\bar\lambda_{\varepsilon}^{\ast}>0$ and $\underline\lambda_{\varepsilon}^{\ast}>0$ for all $\varepsilon\le\varepsilon_{0}$.
Due to \textbf{(F)}, we can verify that $v$ is a supersolution for system \eqref{eq3.2} and \eqref{eq3.3} for all $\varepsilon>0$
small enough, and the conclusions  obtained in Section 3 still hold for these system. By Corollary \ref{c2.1},
system \eqref{eq3.2} and \eqref{eq3.3} admit a unique positive steady state $\bar u_{\varepsilon}^{\ast}(y)$ and
$\underline u_{\varepsilon}^{\ast}(y)$, respectively, and
\begin{equation}\label{eq3.5}
\lim_{t\to+ \infty} \left\| \bar u_{\varepsilon}(t,\cdot)- \bar u_{\varepsilon}^{\ast}\right\| _{X} = \lim_{t\to+ \infty}
\left\| \underline u_{\varepsilon}(t,\cdot)- \underline u_{\varepsilon}^{\ast}\right\| _{X}=0.
\end{equation}
One can verify that $\bar u_{\varepsilon}^{\ast}(y) \ge \underline u_{\varepsilon}^{\ast}(y)$, and
$\bar u_{\varepsilon}^{\ast}(y)$ is nondecreasing and $\underline u_{\varepsilon}^{\ast}(y)$ is nonincreasing
in $\varepsilon$. Hence, for each $y\in \overline{\Omega}_{0}$, $\bar u_{\varepsilon}^{\ast}(y)$ and
$\underline u_{\varepsilon}^{\ast}(y)$ converge to some $\bar u ^{\ast}(y)$ and $\underline u^{\ast}(y)$
as $\varepsilon$ tends to zero, respectively.
By Lebesgue dominated convergence theorem, it is straightforward to verify that
$\bar u ^{\ast}(y)$ and $ \underline u^{\ast}(y)$ satisfy the same equation
\[
D\int_{\Omega_{0}}J_{\frac{1}{\rho_{\infty}}}(y-z)u(z)\,\mathrm{d}z-Du(y) +f(u(y))=0.
\]
According to  Proposition \ref{pr3.1}, we have $\bar u ^{\ast}= \underline u^{\ast}= u^{\ast}\in X$.
Therefore, by Dini's theorem,
\begin{equation}\label{eq3.6}
\lim_{\varepsilon\to 0^{+}} \left\| \bar u_{\varepsilon}^{\ast}-  u^{\ast}\right\| _{X} = \lim_{\varepsilon\to 0^{+}}
\left\| \underline u_{\varepsilon}^{\ast}-  u^{\ast}\right\| _{X}=0
\end{equation}
By combining \eqref{eq3.4}--\eqref{eq3.6} with Remark \ref{pr3.2}, we obtain the global stability of $u^{\ast}$.

When $\lambda^{\ast}<0$, there exists $\varepsilon_{1}$ such that $\bar \lambda^{\ast}_{\varepsilon_{1}}<0$.
By Corollary \ref{c2.1}, one has $\bar u_{\varepsilon_{1}}(t,y)$ converges to zero uniformly in $y\in\overline{\Omega}_{0}$
as $t \to +\infty$, which implies
\begin{equation}\label{eq4.6}
	\lim\limits_{t\to+ \infty}\left\| u(t,\cdot)\right\| _{X}=0.
\end{equation}

When $\lambda^{\ast}=0 $, we have $\bar\lambda^{\ast}_{\varepsilon}>0$ for any $\varepsilon>0$.
Hence, to prove \eqref{eq4.6}, it suffices to show that $ \bar u ^{\ast}(y) \equiv 0$. According to
Proposition \ref{pr3.1} (or Lemma \ref{le2.1}), this holds true.  In view of  Remark \ref{pr3.2}, we then deduce the global asymptotic stability of $0$.
\end{proof}

By Theorem \ref{generalized-dynamical}, we also have the following
observation.
\begin{theorem}
Assume {\rm\textbf{(B1)}} holds. The following statements are valid
for system \eqref{intro-4}:
\begin{enumerate}
	\item[(1)]  If $\lambda^{\ast}\le0$, then $0$ is globally asymptotically stable in $\hat X_{+}^{v}$.
	\item[(2)] If $\lambda^{\ast}>0$, then $u^{\ast}$ is globally asymptotically stable in $\hat X_{+}^{v}\setminus\hat{\mathbf 0}$.
\end{enumerate}
\end{theorem}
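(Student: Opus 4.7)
The plan is to mirror the proof of Theorem \ref{th3.1} almost verbatim, but to substitute Theorem \ref{generalized-dynamical} for Corollary \ref{c2.1} whenever we invoke the long-time behaviour of the auxiliary systems, since the latter gives convergence in $\hat X$ rather than only in $X$. First, for any $u_{0}\in\hat X_{+}^{v}$, Proposition \ref{2.1} guarantees a unique solution $u(t,\cdot;u_{0})\in C(\mathbb{R}_{+},\hat X_{+}^{v})$ of \eqref{intro-4}, and Theorem \ref{th2.2}(2) ensures that, if additionally $u_{0}\in\hat X_{+}^{v}\setminus\hat{\mathbf 0}$, then $u(t,\cdot;u_{0})\in\mathrm{Int}(\hat X_{+})$ for every $t>0$.

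Exactly as in the proof of Theorem \ref{th3.1}, for each small $\varepsilon>0$ we choose $t_{\varepsilon}>0$ so that the kernel $J_{1/\rho(t)}$ and the factor $n\dot\rho(t)/\rho(t)$ are controlled by $J_{1/\rho_{\infty}}\pm\varepsilon$ and $\pm\varepsilon$ respectively for $t\ge t_{\varepsilon}$, and set up the upper and lower auxiliary problems \eqref{eq3.2}--\eqref{eq3.3} with common initial datum $u(t_{\varepsilon},\cdot)\in\hat X_{+}^{v}$. By the comparison principle (Theorem \ref{th2.2}(3)) we have
$$
\underline u_{\varepsilon}(t,y)\le u(t,y;u_{0})\le \bar u_{\varepsilon}(t,y),\quad y\in\overline\Omega_{0},\ t\ge t_{\varepsilon}.
$$
If $\lambda^{\ast}<0$, pick $\varepsilon_{1}$ so small that $\bar\lambda^{\ast}_{\varepsilon_{1}}<0$, and apply Theorem \ref{generalized-dynamical}(1) to \eqref{eq3.2} to conclude $\|\bar u_{\varepsilon_{1}}(t,\cdot)\|_{\hat X}\to 0$, which forces $\|u(t,\cdot)\|_{\hat X}\to 0$. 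If $\lambda^{\ast}=0$, then $\bar\lambda^{\ast}_{\varepsilon}>0$ for every $\varepsilon>0$, so Theorem \ref{generalized-dynamical}(3) yields convergence of $\bar u_{\varepsilon}(t,\cdot)$ to the unique positive steady state $\bar u_{\varepsilon}^{\ast}$ in $\hat X$, and the monotone convergence $\bar u_{\varepsilon}^{\ast}\to 0$ in $X$ as $\varepsilon\to 0^{+}$, already established in the proof of Theorem \ref{th3.1}, squeezes $u(t,\cdot)$ to zero in $\hat X$.

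When $\lambda^{\ast}>0$, choose $\varepsilon_{0}>0$ small enough that $\underline\lambda^{\ast}_{\varepsilon},\bar\lambda^{\ast}_{\varepsilon}>0$ for all $\varepsilon\le\varepsilon_{0}$. Since $u_{0}\in\hat X_{+}^{v}\setminus\hat{\mathbf 0}$, Theorem \ref{th2.2}(2) gives $u(t_{\varepsilon},\cdot)\in\mathrm{Int}(\hat X_{+})$; in particular $u(t_{\varepsilon},\cdot)\in\hat X_{+}^{v}\setminus\hat{\mathbf 0}$, so Theorem \ref{generalized-dynamical}(3) can be applied to both auxiliary problems, yielding convergence of $\underline u_{\varepsilon}(t,\cdot)$ and $\bar u_{\varepsilon}(t,\cdot)$ in $\hat X$ to $\underline u_{\varepsilon}^{\ast}$ and $\bar u_{\varepsilon}^{\ast}$ respectively. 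Letting $\varepsilon\to 0^{+}$, the convergence $\underline u_{\varepsilon}^{\ast},\bar u_{\varepsilon}^{\ast}\to u^{\ast}$ in $X$ established in the proof of Theorem \ref{th3.1} (together with Dini's theorem) gives $\|u(t,\cdot)-u^{\ast}\|_{\hat X}\to 0$. Finally, Liapunov stability follows from Remark \ref{pr3.2}.

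The main obstacle is the critical case $\lambda^{\ast}=0$: even after the $\varepsilon$-perturbation the upper auxiliary system is supercritical, so one cannot read off decay directly. The remedy is the double limit, first $t\to+\infty$ in the $\hat X$-norm (which is exactly what Theorem \ref{generalized-dynamical}(3) delivers, and which is where working in $\hat X$ rather than only in $X$ genuinely matters), and then $\varepsilon\to 0^{+}$ via the uniqueness statement of Proposition \ref{pr3.1}. A secondary technical point is that the lower auxiliary problem can only be started from strongly positive data; this is why we initialise it at time $t_{\varepsilon}$, after the instantaneous strong-positivity from Theorem \ref{th2.2}(2) has taken effect.
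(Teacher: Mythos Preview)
Your proposal is correct and follows exactly the route the paper intends: the paper records this theorem as an immediate consequence of Theorem \ref{generalized-dynamical}, and your write-up simply unpacks that by rerunning the proof of Theorem \ref{th3.1} with Theorem \ref{generalized-dynamical} replacing Corollary \ref{c2.1}. One small correction: for the sandwich $\underline u_{\varepsilon}\le u\le \bar u_{\varepsilon}$ you should invoke Theorem \ref{th2.2}(1) rather than (3), since the initial difference is identically zero (hence in $\hat X_{+}$) and you want a pointwise, not merely a.e., inequality to control the $\hat X$-norm.
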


\subsection{Asymptotically time-periodic domain}

In this subsection, we explore the global dynamics in the case where $\Omega_t$ is asymptotically time-periodic.
Specifically, we assume that there exists a positive $T$-periodic function $ \rho_T$ satisfying the following asymptotic condition:
\begin{itemize}
	\item[\textbf{(B2)}] $\lim\limits_{t\to +\infty}  (\rho (t) -\rho_T(t))= \lim\limits_{t\to +\infty} (\dot\rho (t)-\dot\rho_T(t))= 0$.
\end{itemize}
Consider the limiting system:
\begin{equation}\label{eq4.1}
\left\{
{\begin{array}{*{20}{l}}
	\dfrac{\partial u}{\partial t}=D\displaystyle\int_{\Omega_{0}}J_{\frac{1}{\rho_T(t)}}(y-z)u(t,z)\,\mathrm{d}z-Du(t,y)
    -n\dfrac{\dot\rho_T(t)}{\rho_T(t)}u +f(u), & t>0, y \in \Omega_{0},
        \\[4mm]
	u(0,y)=u_{0}(y),  & y \in \Omega_{0}.
	\end{array}}
\right.
\end{equation}
and the linearized system:
\begin{equation}\label{eq4.5}
\left\{
{\begin{array}{*{20}{l}}
	\dfrac{\partial u}{\partial t}=D\displaystyle\int_{\Omega_{0}}J_{\frac{1}{\rho_T(t)}}(y-z)u(t,z)\,\mathrm{d}z-Du(t,y)
    -n\dfrac{\dot\rho_T(t)}{\rho_T(t)}u +\mathcal{D}f(0)u, & t>0, y \in \Omega_{0},
        \\[4mm]
	u(0,y)=u_{0}(y),  & y \in \Omega_{0}.
	\end{array}}
\right.
\end{equation}
Let $V(t,s)$ denote the evolution family on $X$ generated by \eqref{eq4.5}.
Consider an  eigenvalue problem association with  \eqref{eq4.5}:
\begin{equation}\label{eq4.4}
\left\{
{\begin{array}{*{20}{l}}
	\dfrac{\partial u}{\partial t}=D\displaystyle\int_{\Omega_{0}}J_{\frac{1}{\rho_T(t)}}(y-z)u(t,z)\,\mathrm{d}z-Du(t,y)
    -n\dfrac{\dot\rho_T(t)}{\rho_T(t)}u +\mathcal{D}f(0)u+\lambda u, & t\in \mathbb{R}, y \in \Omega_{0},
        \\[4mm]
	u(T+t,y)=u(t, y),  &t\in \mathbb{R}, y \in \Omega_{0}.
\end{array}}
\right.
\end{equation}
According to \cite{MR3637938}, problem \eqref{eq4.4} admits the principal eigenvalue $\lambda_{T}^{\ast}$, which satisfies
$$
\lambda_{T}^{\ast}= - \omega(V) =- \frac{\ln r(V(T))}{T}.
$$
By applying Theorem \ref{th2.3}, we derive the following results.

\begin{proposition}\label{le4.1}
The following statements are valid for system \eqref{eq4.1}:
\begin{itemize}
\item[(1)]  If $\lambda_{T}^{\ast}\ge0$, then $0$ is globally asymptotically stable in $X_{+}^{v} $.
\item[(2)] If $\lambda_{T}^{\ast}<0$, then system \eqref{eq4.1} admits a unique positive steady state $u_{T}^{\ast}$ in $\tilde X_{+}^{v} $;
            moreover, $u_{T}^{\ast}\in\mathrm{Int}(\mathcal{C}_{T}^{+}) $ and is globally asymptotically stable in
		  $X_{+}^{v}\setminus\left\{0\right\}$.
	\end{itemize}
\end{proposition}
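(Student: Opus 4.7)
The plan is to recognize system \eqref{eq4.1} as a particular instance of the general $T$-periodic nonlocal system \eqref{eq2.2} studied in Section 3, and then appeal to Theorem \ref{th2.3}. With the identifications
\[
\mathcal{K}_i(t,x) := J_{1/\rho_T(t)}(x) = \rho_T^{n}(t)\, J(\rho_T(t)x), \qquad g(t,x,u) := -n\frac{\dot\rho_T(t)}{\rho_T(t)} u + f(u),
\]
the periodicity assumption \textbf{(T)} is immediate from the $T$-periodicity of $\rho_T$, and the kernel assumption \textbf{(K)} follows from \textbf{(J)} together with the change of variables $z = \rho_T(t)x$ which preserves continuity, positivity at $0$, and unit mass. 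For the reaction term, cooperativity and irreducibility of the Jacobian of $g$ reduce to the corresponding properties of $\mathcal{D}f(u)$ in \textbf{(F)}, since the additional term $-n\dot\rho_T(t)/\rho_T(t)\cdot I$ only shifts the diagonal. The subhomogeneity \textbf{(G3)} transfers directly from $\alpha f(u) < f(\alpha u)$ because the linear part $-n\dot\rho_T(t)/\rho_T(t)\, u$ satisfies the corresponding relation with equality.

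The delicate point is the sign condition in \textbf{(G1)}, namely $g_i(t,x,u)\le 0$ on the face $u_i = v_i$, because \textbf{(F)} is phrased in terms of the original growth function $\rho$ rather than the periodic limit $\rho_T$. I would bridge this gap as follows: by \textbf{(B2)} the difference $\dot\rho(t)/\rho(t) - \dot\rho_T(t)/\rho_T(t)$ tends to zero as $t \to \infty$; fixing $t_0 \in [0,T)$ and evaluating along the sequence $t_k := t_0 + kT$, the $T$-periodicity of $\dot\rho_T/\rho_T$ yields $\dot\rho(t_k)/\rho(t_k)\to \dot\rho_T(t_0)/\rho_T(t_0)$. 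Applying the bound $f_i(u) - n\dot\rho(t_k)/\rho(t_k)\cdot v_i \le -\sigma$ from \textbf{(F)} and passing to the limit in $k$ then gives $f_i(u) - n\dot\rho_T(t_0)/\rho_T(t_0)\cdot v_i \le -\sigma$, which is exactly the inequality demanded by \textbf{(G1)}.

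Once \textbf{(K)}, \textbf{(G1)}--\textbf{(G3)} and \textbf{(T)} have been verified, the conclusion is read off directly from Theorem \ref{th2.3} via the identification $\omega(\Phi) = \omega(V) = -\lambda_T^{\ast}$ recorded before the statement. If $\lambda_T^{\ast} > 0$ then $\omega(V) < 0$ and Theorem \ref{th2.3}(1) yields global asymptotic stability of $0$; if $\lambda_T^{\ast} = 0$ then $\omega(V) = 0$ coincides with the principal eigenvalue furnished by \cite{MR3637938}, placing us in the second alternative of Theorem \ref{th2.3}(2) and again yielding global asymptotic stability of $0$; if $\lambda_T^{\ast} < 0$ then $\omega(V) > 0$, and Theorem \ref{th2.3}(3) together with Lemma \ref{le2.1}(3) provides the unique positive $T$-periodic solution $u_T^{\ast} \in \mathrm{Int}(\mathcal{C}_T^{+})$ in $\tilde X_{+}^{v}$, globally asymptotically stable in $X_{+}^{v}\setminus\{0\}$. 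This recovers both assertions of the proposition.
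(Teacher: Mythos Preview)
Your proposal is correct and follows exactly the approach of the paper, which simply states ``By applying Theorem \ref{th2.3}, we derive the following results'' without further elaboration. You have supplied the hypothesis verification that the paper leaves implicit---in particular your observation that the sign condition in \textbf{(G1)} for $g(t,x,u)=-n\dot\rho_T(t)/\rho_T(t)\,u+f(u)$ requires passing from $\rho$ to $\rho_T$ via \textbf{(B2)} is a genuine detail the paper does not spell out.
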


Based on Theorem \ref{th2.3} (or Lemma \ref{le2.1}), Proposition \ref{le4.1}, and the arguments in Theorem \ref{th3.1},
we have the following result.

\begin{theorem}\label{th4.1}
Assume {\rm\textbf{(B2)}} holds. For system \eqref{intro-4}, the following statements are valid:
\begin{itemize}
\item[(1)]  If $\lambda_{T}^{\ast}\ge0$, then $0$ is globally asymptotically stable in $X_{+}^{v} $.
\item[(2)]  If $\lambda_{T}^{\ast}<0$, then $u_{T}^{\ast}$ is globally asymptotically stable in
$X_{+}^{v}\setminus\left\{0\right\}$.
\end{itemize}
\end{theorem}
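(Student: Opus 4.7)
The plan is to adapt the comparison-and-sandwich argument of Theorem \ref{th3.1}, pivoting now about the $T$-periodic limiting system \eqref{eq4.1} rather than an autonomous limit. By condition \textbf{(B2)}, one checks that $J_{1/\rho(t)}(y-z)\to J_{1/\rho_T(t)}(y-z)$ uniformly for $(y,z)\in\overline\Omega_0\times\overline\Omega_0$ and that $n\dot\rho(t)/\rho(t)-n\dot\rho_T(t)/\rho_T(t)\to 0$ uniformly in $t$. Consequently, for any $\varepsilon>0$ there exists $t_\varepsilon>0$ such that for all $t\ge t_\varepsilon$,
\begin{equation*}
\max\{J_{1/\rho_T(t)}(y-z)-\varepsilon,0\}\le J_{1/\rho(t)}(y-z)\le J_{1/\rho_T(t)}(y-z)+\varepsilon,
\end{equation*}
together with the analogous two-sided bound on the growth coefficient.

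I would then introduce two nonautonomous, $T$-periodic perturbed auxiliary systems in direct analogy with \eqref{eq3.2} and \eqref{eq3.3}, obtained by replacing $J_{1/\rho_\infty}$ and the $\pm\varepsilon$-perturbation of the coefficient with their $T$-periodic counterparts, and by taking the initial time to be $t_\varepsilon$. By Theorem \ref{th2.2}, the solution $u(t,y)$ of \eqref{intro-4} is sandwiched on $[t_\varepsilon,\infty)\times\overline\Omega_0$ between the solutions $\underline u_\varepsilon$ and $\bar u_\varepsilon$ of these auxiliary systems. The associated $T$-periodic linearizations admit principal eigenvalues $\underline\lambda_{T,\varepsilon}^{\ast}$ and $\bar\lambda_{T,\varepsilon}^{\ast}$, and the evolution-family/variation-of-constants argument used for $\lambda^{\ast}$ in Section~4.1 (together with \cite[Lemma~2.4]{MR3992071}) transfers verbatim to this time-periodic framework to yield $\underline\lambda_{T,\varepsilon}^{\ast},\bar\lambda_{T,\varepsilon}^{\ast}\to\lambda_T^{\ast}$ as $\varepsilon\to 0^+$.

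For (2) with $\lambda_T^{\ast}<0$, I choose $\varepsilon_0>0$ small enough that $\underline\lambda_{T,\varepsilon}^{\ast},\bar\lambda_{T,\varepsilon}^{\ast}<0$ for every $\varepsilon\le\varepsilon_0$. Proposition \ref{le4.1} then provides unique positive $T$-periodic solutions $\underline u_{T,\varepsilon}^{\ast},\bar u_{T,\varepsilon}^{\ast}\in\mathrm{Int}(\mathcal{C}_T^+)$ that globally attract $\underline u_\varepsilon(t,\cdot)$ and $\bar u_\varepsilon(t,\cdot)$ respectively. Monotone dependence of $\underline u_{T,\varepsilon}^{\ast}$ and $\bar u_{T,\varepsilon}^{\ast}$ on $\varepsilon$ together with the Lebesgue dominated convergence theorem identifies their pointwise limits as $\varepsilon\to 0^+$ with nonnegative $T$-periodic solutions of \eqref{eq4.1}; the uniqueness clause of Proposition \ref{le4.1}(2) forces both limits to coincide with $u_T^{\ast}$. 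Dini's theorem upgrades pointwise to uniform convergence, and combining with the sandwich yields $\lim_{t\to\infty}\|u(t,\cdot)-u_T^{\ast}(t,\cdot)\|_X=0$; Remark \ref{pr3.2} then delivers Liapunov stability. For (1) with $\lambda_T^{\ast}>0$, a single $\varepsilon_1$ with $\bar\lambda_{T,\varepsilon_1}^{\ast}>0$ suffices, since Proposition \ref{le4.1}(1) gives $\bar u_{\varepsilon_1}(t,\cdot)\to 0$ uniformly and hence $u(t,\cdot)\to 0$.

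The delicate step is the critical case $\lambda_T^{\ast}=0$ in (1), where $\bar\lambda_{T,\varepsilon}^{\ast}$ will generically be strictly negative, so the supersolution system does admit a nontrivial positive periodic solution $\bar u_{T,\varepsilon}^{\ast}$. I would handle this exactly as in Theorem \ref{th3.1}: monotonicity in $\varepsilon$ and the dominated convergence theorem furnish a pointwise limit $\bar u^{\ast}$ which is a nonnegative $T$-periodic solution of \eqref{eq4.1}, and Proposition \ref{le4.1}(1)—which at bottom relies on the uniqueness part of Lemma \ref{le2.1}(2) for the critical threshold—forces $\bar u^{\ast}\equiv 0$. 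Dini's theorem together with Remark \ref{pr3.2} then yields the global asymptotic stability of $0$ and completes the proof.
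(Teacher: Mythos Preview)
Your proposal is correct and follows precisely the route the paper indicates: the paper does not give a separate proof of Theorem~\ref{th4.1} but simply states that it follows from Theorem~\ref{th2.3} (or Lemma~\ref{le2.1}), Proposition~\ref{le4.1}, and the arguments in Theorem~\ref{th3.1}. You have faithfully carried out that transplantation to the $T$-periodic setting, including the sandwich via $\pm\varepsilon$-perturbed auxiliary periodic systems, the convergence of the perturbed principal eigenvalues, the monotone-limit identification via Lemma~\ref{le2.1}/Proposition~\ref{le4.1}, and the Dini upgrade---with the critical case $\lambda_T^{\ast}=0$ handled exactly as in Theorem~\ref{th3.1} by observing that the pointwise limit of $\bar u_{T,\varepsilon}^{\ast}$ must be the trivial periodic solution.
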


Theorem \ref{generalized-dynamical} gives rise to the following
observation.
\begin{theorem}
Assume {\rm\textbf{(B2)}} holds. The following statements are valid
for system \eqref{intro-4}:
\begin{enumerate}
	\item[(1)]  If $\lambda_{T}^{\ast}\ge0$, then $0$ is globally asymptotically stable in $\hat X_{+}^{v}$.
	\item[(2)]  If $\lambda_{T}^{\ast}<0$, then $u^{\ast}_{T}$ is globally asymptotically stable in $\hat X_{+}^{v}\setminus\hat{\mathbf 0}$.
\end{enumerate}
\end{theorem}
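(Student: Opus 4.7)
The target extends Theorem \ref{th4.1} from the space $X_+^v$ of continuous initial data to the larger space $\hat X_+^v$ of bounded measurable initial data, exactly as Theorem \ref{generalized-dynamical} extends Theorem \ref{th2.3} in the general periodic setting. The plan is to combine two already-established ingredients: the $\hat X$-comparison principle Theorem \ref{th2.2}, which sandwiches a measurable solution between two continuous-data solutions; and Theorem \ref{th4.1}, which controls the large-time behaviour of those continuous-data sandwiching solutions. For part~(1), take $u_0\in\hat X_+^v$; since $u_0\le v$ in $\hat X_+$, Theorem \ref{th2.2}(1) gives $0\le u(t,\cdot;u_0)\le u(t,\cdot;v)$ in $\hat X$ for all $t\ge 0$, and Theorem \ref{th4.1}(1) forces $\|u(t,\cdot;v)\|_X\to 0$, hence $\|u(t,\cdot;u_0)\|_{\hat X}\to 0$. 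Remark \ref{pr3.2} then upgrades local attractivity to Liapunov stability of~$0$.

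For part~(2), fix $u_0\in\hat X_+^v\setminus\hat{\mathbf 0}$ and pick any $T_0>0$. Assumption \textbf{(F)} gives the irreducibility hypothesis \textbf{(G2)} for the linearisation of \eqref{intro-4}, so Theorem \ref{th2.2}(2), applied to the comparison of $u(\cdot;u_0)$ with the trivial solution, yields $u(T_0,\cdot;u_0)\in\mathrm{Int}(\hat X_+)$. Hence there exists $\delta\in(0,1)$ with $\delta v\le u(T_0,\cdot;u_0)\le v$ a.e.\ on $\Omega_0$, and both endpoints lie in $X_+^v\setminus\{0\}$. Treating $T_0$ as the new initial time for the (still asymptotically $T$-periodic) system \eqref{intro-4} and applying Theorem \ref{th2.2}(1) again, we obtain for $t\ge T_0$
$$u(t,\cdot;\delta v,T_0)\le u(t,\cdot;u_0)\le u(t,\cdot;v,T_0),$$
where the outer terms denote solutions of \eqref{intro-4} with continuous data $\delta v$ and $v$ prescribed at time $T_0$. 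Theorem \ref{th4.1}(2), applied from time $T_0$ to each of these continuous-data problems, forces both outer terms to converge to $u_T^*(t,\cdot)$ in $X$, hence in $\hat X$. The squeeze delivers $\|u(t,\cdot;u_0)-u_T^*(t,\cdot)\|_{\hat X}\to 0$, and Remark \ref{pr3.2} gives the full global asymptotic stability.

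The main (and essentially the only) point to check is that Theorem \ref{th4.1} remains valid when the initial time is shifted from $0$ to $T_0>0$. This is clear because \textbf{(B2)} is invariant under finite time shifts, the limiting $T$-periodic kernel and coefficient $\rho_T$ are unchanged, and $u_T^*$ from Proposition \ref{le4.1} is intrinsic to the periodic limit. If one prefers not to invoke this translation, one may instead re-run the $\varepsilon$-perturbation/sandwich construction underlying Theorems \ref{th3.1} and \ref{th4.1} with $t_\varepsilon$ replaced by $\max\{t_\varepsilon,T_0\}$; no new ideas are required.
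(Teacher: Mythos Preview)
Your proposal is correct and follows essentially the same route as the paper. The paper itself gives no detailed proof here, simply stating that ``Theorem \ref{generalized-dynamical} gives rise to the following observation''; your argument is precisely the unpacking of that reference---use Theorem \ref{th2.2}(2) to obtain strong positivity of the $\hat X$-solution at any positive time $T_0$, sandwich it between continuous-data solutions (with data $\delta v$ and $v$), and then invoke the $X$-version Theorem \ref{th4.1} from time $T_0$, exactly mirroring how the paper's proof of Theorem \ref{generalized-dynamical} bootstraps from Theorem \ref{th2.3}.
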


\section{Asymptotically unbounded domain}

In this section, we continue to investigate the global dynamics of system \eqref{intro-4} in the case where $\Omega_t$
is asymptotically unbounded. Accordingly, we assume that $\rho (t)$ satisfies the following infinite growth condition:
\begin{itemize}
\item [\textbf{(B3)}] $ \lim\limits_{t\to +\infty} \rho (t)= +\infty$,\,\, $\lim\limits_{t\to +\infty}
	\frac{\dot\rho(t)}{\rho (t)} =k \ge 0$ and $\Omega_{0}$ is convex.
\end{itemize}
In addition to the assumption \textbf{(J)}, we also impose the following assumption on $J$ in this section:
\begin{itemize}
\item [\textbf{(J1)}] $J$ is compactly supported with its center of mass at the origin, i.e., $\int_{\mathbb{R}^n}J(x)x\,\mathrm{d}x=0$.
\end{itemize}
Throughout this section, we always assume that \textbf{(B3)} and \textbf{(J1)} hold. Consider the limiting system:
\begin{equation}\label{eq5.1}
\left\{
{\begin{array}{*{20}{l}}
	\dfrac{\mathrm{d} w}{\mathrm{d} t}=-nkw +f(w), & t>0,
        \\[3mm]
	w(0)=w_{0}\in\mathbb{M}.
\end{array}}
\right.
\end{equation}
For convenience, denote $A:= -nkI+\mathcal{D}f(0)$ where $I$ is a identity matrix. Note that the Perron--Frobenius
theorem implies that $s(A)$ is the principal eigenvalue of $A$, that is, there exists a vector $\bar z\gg 0$ in
$\mathbb{R}^{m}$ such that $A\bar z = s(A) \bar z$. According to \cite[Theorem 2.3.4]{MR3643081}, we have the
following threshold dynamics result for system \eqref{eq5.1}.

\begin{proposition}\label{pr5.1}
The following statements are valid:
\begin{itemize}
	\item [(1)] If $s(A)\le0$, then $0$ is globally asymptotically stable for system \eqref{eq5.1} in $\mathbb{M}$.
	\item [(2)] If $s(A)>0$, then system \eqref{eq5.1} has a unique positive equilibrium $w_{e}$, and it is globally
        asymptotically stable  for system \eqref{eq5.1} in $\mathbb{M}\setminus\left\lbrace 0\right\rbrace  $.
\end{itemize}
\end{proposition}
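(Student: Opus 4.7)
The plan is to treat \eqref{eq5.1} as a finite-dimensional cooperative ODE on the order interval $\mathbb{M}$ whose right-hand side is strictly subhomogeneous and irreducible, and then invoke the standard dichotomy for monotone subhomogeneous semiflows (\cite[Theorem 2.3.4]{MR3643081}). Setting $F(w):=-nkw+f(w)$, the whole argument reduces to verifying the structural hypotheses of that theorem for $F$ on $\mathbb{M}$ and identifying the corresponding threshold with $s(A)$.

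First I would establish the basic properties of $F$. Since $f(0)=0$ by \textbf{(F)}, we have $F(0)=0$. The Jacobian $\mathcal{D}F(u)=-nkI+\mathcal{D}f(u)$ inherits cooperativity and irreducibility from $\mathcal{D}f(u)$ on $\mathbb{M}$ because subtracting a scalar multiple of the identity affects neither the signs of the off-diagonal entries nor the connectivity of the associated graph; consequently, the induced flow is monotone and strongly monotone in the interior. Strict subhomogeneity $\alpha F(w)<F(\alpha w)$ for $\alpha\in(0,1)$ and $w\in\mathbb{M}\cap\mathrm{Int}(\mathbb{R}^m_+)$ follows from $\alpha f(w)<f(\alpha w)$ in \textbf{(F)} together with the identity $-nk(\alpha w)=\alpha(-nkw)$. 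To obtain global existence and positive invariance of $\mathbb{M}$, I would pass to the limit $t\to\infty$ in the hypothesis $f_i(u)-n\dot\rho(t)v_i/\rho(t)\le -\sigma$ at the face $u_i=v_i$; since $\dot\rho(t)/\rho(t)\to k$, we obtain $F_i(u)=f_i(u)-nkv_i\le -\sigma<0$ whenever $u\in\mathbb{M}$ with $u_i=v_i$, so the vector field strictly points inward on each face of $\mathbb{M}$ and every orbit starting in $\mathbb{M}$ stays in $\mathbb{M}$ for all $t\ge 0$.

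With these properties in hand, \cite[Theorem 2.3.4]{MR3643081} delivers the dichotomy governed by $s(F'(0))=s(-nkI+\mathcal{D}f(0))=s(A)$: if $s(A)\le 0$, then $0$ is globally asymptotically stable on $\mathbb{M}$; if $s(A)>0$, then $F$ admits a unique positive equilibrium $w_e\in\mathrm{Int}(\mathbb{R}^m_+)\cap\mathbb{M}$ attracting every nontrivial orbit in $\mathbb{M}\setminus\{0\}$. In the supercritical case, existence of $w_e$ can be produced by a monotone iteration: using the Perron eigenvector $\bar z\gg 0$ of $A$ with eigenvalue $s(A)>0$, strict subhomogeneity implies that $\varepsilon\bar z$ is a strict subsolution for all sufficiently small $\varepsilon>0$, and $v$ is a supersolution by the invariance step, so the monotone orbit from $\varepsilon\bar z$ converges to a positive equilibrium; uniqueness then follows from the usual sliding argument based on strict subhomogeneity. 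The only subtle point I anticipate is the critical case $s(A)=0$, where local linearization is inconclusive; here strict subhomogeneity combined with the strictly inward field on $\partial\mathbb{M}$ coming from \textbf{(F)} is exactly what rules out any nontrivial equilibrium and forces global convergence to zero.
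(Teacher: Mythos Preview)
Your proposal is correct and follows exactly the route the paper takes: the paper simply cites \cite[Theorem~2.3.4]{MR3643081} without spelling out the verification of its hypotheses, whereas you carry out that verification explicitly (cooperativity/irreducibility of $\mathcal{D}F$, strict subhomogeneity inherited from \textbf{(F)}, and inward pointing of $F$ on the faces of $\mathbb{M}$ via the limit $\dot\rho/\rho\to k$ from \textbf{(B3)}). The additional existence/uniqueness sketch you give for $w_e$ is consistent with, and already subsumed by, that cited theorem.
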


Consider the following auxiliary problem:
\begin{equation}\label{eq5.2}
	\left\{ {\begin{array}{*{20}{l}}
			\dfrac{\mathrm{d} w}{\mathrm{d} t}=-n\dfrac{\dot\rho(t)}{\rho(t)}w +f(w), & t>0,\\[3mm]
			w(0)=w_{0}\in\mathbb{M}.\\
		\end{array}} \right.
\end{equation}
Clearly, system \eqref{eq5.1} is the limiting system of \eqref{eq5.2}. Applying the theory of asymptotically
autonomous semiflows or the theory of chain transitive sets (see \cite[Section 1.2.1]{MR3643081}), we can deduce
that if $s(A)\le 0 $, the solution $w(t;w_{0})$ of \eqref{eq5.2} satisfies
$\lim\limits_{t\to+ \infty}w(t;w_{0})=0$.

For each $\phi\in X^{v}_{+}$, let $w_{0}:=\max\limits_{y\in\overline\Omega_{0}} \phi(y)$.
According to Theorem \ref{th2.2}, we have $u(t,y;\phi)\le w(t;w_{0})$. It follows that
$\lim\limits_{t\to +\infty}\left\|u(t,\cdot;\phi) \right\|_{X} =0$.
That is, we obtain the following result.

\begin{proposition}\label{pr5.2}
If $s(A)\le 0 $, for each $\phi\in X^{v}_{+}$, the solution $u(t,y;\phi)$ of system \eqref{intro-4} satisfies
$\lim\limits_{t\to +\infty}\left\|u(t,\cdot;\phi) \right\|_{X} =0$.
\end{proposition}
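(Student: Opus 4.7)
My plan is to dominate the nonlocal PDE solution $u(t,y;\phi)$ by a spatially constant supersolution coming from the scalar ODE \eqref{eq5.2}, and then pass to the limiting autonomous system \eqref{eq5.1} to invoke Proposition \ref{pr5.1}. The whole argument rests on the comparison principle (Theorem \ref{th2.2}) and the theory of asymptotically autonomous semiflows.

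For the comparison step, given $\phi\in X_{+}^{v}$, I would set $w_{0}:=\max_{y\in\overline\Omega_{0}}\phi(y)\in\mathbb{M}$, where the maximum is taken componentwise and exists since $\phi$ is continuous on the compact set $\overline\Omega_{0}$ with $0\le\phi\le v$. Let $w(t;w_{0})$ denote the solution of the ODE \eqref{eq5.2}. Regarding $w(t;w_{0})$ as a spatially constant function on $\overline\Omega_{0}$, I would verify it is a supersolution of \eqref{intro-4} by observing that
$$
D\int_{\Omega_{0}}J_{\frac{1}{\rho(t)}}(y-z)\,w(t;w_{0})\,\mathrm{d}z - Dw(t;w_{0}) = Dw(t;w_{0})\Big(\int_{\Omega_{0}}J_{\frac{1}{\rho(t)}}(y-z)\,\mathrm{d}z - 1\Big)\le 0,
$$
because $J_{\frac{1}{\rho(t)}}\ge 0$ and integrates to $1$ on $\mathbb{R}^{n}$. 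The remaining dilution and reaction terms are precisely those of \eqref{eq5.2}, so the nonlocal diffusion only contributes a favorable sign. Theorem \ref{th2.2} then yields $0\le u(t,y;\phi)\le w(t;w_{0})$ for all $t\ge 0$ and $y\in\overline\Omega_{0}$.

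For the convergence step, condition \textbf{(B3)} ensures $\frac{n\dot\rho(t)}{\rho(t)}\to nk$ as $t\to+\infty$, so \eqref{eq5.2} is asymptotically autonomous with limiting system \eqref{eq5.1}. Under $s(A)\le 0$, Proposition \ref{pr5.1} gives that $0$ is globally asymptotically stable for \eqref{eq5.1} on $\mathbb{M}$. Applying the theory of asymptotically autonomous semiflows via the chain transitive set framework in \cite{MR3643081} to the finite-dimensional system \eqref{eq5.2}, I would conclude $w(t;w_{0})\to 0$ as $t\to+\infty$. Combined with the pointwise comparison estimate, this forces $\|u(t,\cdot;\phi)\|_{X}\to 0$, which is the desired conclusion.

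I do not anticipate a major obstacle. The two points that require care are: (i) verifying that the spatially constant function $w(t;w_{0})$ lies in the admissible class $\hat X$ and genuinely satisfies the differential inequality in the definition of supersolution preceding Theorem \ref{th2.2}; and (ii) checking that the asymptotically autonomous limit theorem applies to the ODE \eqref{eq5.2} based only on the pointwise convergence of its coefficient, which is standard once $w(t;w_{0})$ is known to be bounded (here it is trapped in $\mathbb{M}$ thanks to the structural hypotheses in \textbf{(F)}). Both are routine in view of the results already established in Section~3 and the framework of \cite{MR3643081}.
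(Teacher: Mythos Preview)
Your proposal is correct and follows essentially the same approach as the paper: both dominate $u(t,y;\phi)$ by the spatially constant ODE solution $w(t;w_{0})$ of \eqref{eq5.2} with $w_{0}=\max_{y\in\overline\Omega_{0}}\phi(y)$, invoke Theorem~\ref{th2.2} for the comparison, and then use the asymptotically autonomous semiflow/chain transitive set theory from \cite{MR3643081} together with Proposition~\ref{pr5.1} to conclude $w(t;w_{0})\to 0$. Your additional justification that the nonlocal diffusion term applied to a spatial constant is nonpositive is exactly the implicit reason the comparison works.
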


In the following discussion, we focus on the global dynamics of system \eqref{intro-4} for $s(A)>0$.
Consider the following system:
\begin{equation}\label{eq5.3}
\left\{
{\begin{array}{*{20}{l}}
	\dfrac{\partial u}{\partial t}=D\displaystyle\int_{\Omega_{0}}J_{\frac{1}{\rho(t)}}(y-z)u(t,z)\,\mathrm{d}z-Du
    -n ku +f(u), & t>0, y \in \Omega_{0},
        \\[4mm]
	u(0,y)=u_{0}(y),  & y \in \Omega_{0}.
\end{array}}
\right.
\end{equation}
We first analyze the asymptotic behavior of the solutions to system \eqref{eq5.3} and then derive the dynamics of
system \eqref{intro-4} by the comparison principle. To construct a subsolution of the equation corresponding to
system \eqref{eq5.3}, we introduce an auxiliary function that partially decouples the vanishing factor
$\frac{1}{\rho(t)}$ from the effects of nonlocal diffusion.

\begin{lemma}\label{auxiliary-function}
There exist a nonnegative function $\phi\in C^1(\mathbb{R}^n)$, which is strictly positive in $\Omega_0$ and vanishes
outside it,  and a constant $T>0$ such that
\begin{equation}\label{lower-bound}
\int_{\Omega_0}J_{\frac{1}{\rho(t)}}(x-y)\phi(y)\,\mathrm{d}y-\phi(x)\ge -\dfrac{1}{\rho(t)}\phi(x), \quad \forall x\in\mathbb{R}^n,\, \,
 t\ge T.
\end{equation}
\end{lemma}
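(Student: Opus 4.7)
The plan is to take $\phi(x)=\psi(d(x))$, where $d(x):=\mathrm{dist}(x,\partial\Omega_0)$ if $x\in\Omega_0$ and $d(x):=0$ otherwise, for a suitable non-decreasing $\psi\in C^\infty([0,\infty))$, and to verify the inequality by combining the change of variable $w=\rho(t)(x-y)$ with a second-order Taylor expansion that exploits the center-of-mass condition \textbf{(J1)} and the positive-semidefinite structure of $H_\phi$ near $\partial\Omega_0$. Let $R>0$ satisfy $\mathrm{supp}(J)\subset\overline{B_R(0)}$, set $\sigma_J^2:=\int_{\mathbb{R}^n}J(w)|w|^2\,\mathrm{d}w$, and $\tau_{\min}:=\inf_{|\nu|=1}\int_{\mathbb{R}^n}J(w)(\nu\cdot w)^2\,\mathrm{d}w$; the latter is strictly positive since $J$ is continuous with $J(0)>0$, so $J$ is not supported on any hyperplane through the origin. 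Using (B3) and the smoothness of $\partial\Omega_0$, fix $\delta_0>0$ small enough that $d\in C^\infty$ on the interior tubular neighborhood $\mathcal{N}_{2\delta_0}:=\{x\in\Omega_0:d(x)<2\delta_0\}$ with $\|H_d\|_{L^\infty(\mathcal{N}_{2\delta_0})}<\infty$, and $\delta_0\le\tau_{\min}/(6\|H_d\|_\infty\sigma_J^2)$. Choose $\psi$ with $\psi(t)=t^2$ on $[0,\delta_0]$ and $\psi\equiv\psi(2\delta_0)$ on $[2\delta_0,\infty)$; then, since $\psi(0)=\psi'(0)=0$, one checks that $\phi\in C^1(\mathbb{R}^n)$ with Lipschitz gradient, $\phi>0$ on $\Omega_0$, and $\phi\equiv 0$ outside.

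Since $\mathrm{supp}(\phi)\subset\overline{\Omega_0}$, the substitution $w=\rho(t)(x-y)$ rewrites the integral as $\int_{\mathbb{R}^n}J(w)\phi(x-w/\rho(t))\,\mathrm{d}w$. The Taylor formula with integral remainder (valid for $\phi$ with Lipschitz gradient, where $H_\phi$ denotes the almost-everywhere Hessian in $L^\infty(\mathbb{R}^n)$), combined with $\int J(w)w\,\mathrm{d}w=0$ from \textbf{(J1)}, gives
\begin{equation*}
\int_{\Omega_0}J_{\frac{1}{\rho(t)}}(x-y)\phi(y)\,\mathrm{d}y-\phi(x)=\frac{1}{\rho^2(t)}\int_0^1(1-s)\int_{\mathbb{R}^n}J(w)\,w^{\top}H_\phi(\xi_s)w\,\mathrm{d}w\,\mathrm{d}s,
\end{equation*}
where $\xi_s:=x-sw/\rho(t)$. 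For $x\notin\Omega_0$ the left side is non-negative while $-\phi(x)/\rho(t)=0$, so the inequality is immediate.

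For $x\in\Omega_0$ I separate two regimes. In the bulk $\{d(x)>\delta_0/2\}$, $\phi(x)\ge c_0:=(\delta_0/2)^2$, and the crude bound $M:=\|H_\phi\|_{L^\infty(\mathbb{R}^n)}<\infty$ shows the error is bounded in magnitude by $M\sigma_J^2/(2\rho^2(t))\le\phi(x)/\rho(t)$ once $\rho(t)\ge M\sigma_J^2/(2c_0)=:T_1$. In the boundary band $\{d(x)\le\delta_0/2\}$, take $\rho(t)\ge 2R/\delta_0=:T_2$ so that every $\xi_s$ with $w\in\mathrm{supp}(J)$ either leaves $\Omega_0$ (where $H_\phi\equiv 0$) or satisfies $d(\xi_s)<\delta_0$, in which case $H_\phi(\xi_s)=2\nabla d(\xi_s)\nabla d(\xi_s)^{\top}+2d(\xi_s)H_d(\xi_s)$. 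Since $|\nabla d|\equiv 1$ and $\nabla d$ is Lipschitz on $\mathcal{N}_{2\delta_0}$, the pointwise inequality $(a+b)^2\ge a^2/2-b^2$ applied with $a=\nabla d(x)\cdot w$ and $b=(\nabla d(\xi_s)-\nabla d(x))\cdot w$, together with $\int J(w)(\nu\cdot w)^2\,\mathrm{d}w\ge\tau_{\min}$ for every unit $\nu$, yields $\int J(w)(\nabla d(\xi_s)\cdot w)^2\,\mathrm{d}w\ge\frac{1}{2}\tau_{\min}-\|H_d\|_\infty^2R^2\sigma_J^2/\rho^2(t)\ge\frac{\tau_{\min}}{3}$ for $\rho(t)\ge T_3$ large enough. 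Combining this with $d(\xi_s)\le\delta_0$ and $|\int J(w)w^{\top}H_d(\xi_s)w\,\mathrm{d}w|\le\|H_d\|_\infty\sigma_J^2$ gives
\begin{equation*}
\int_{\mathbb{R}^n}J(w)\,w^{\top}H_\phi(\xi_s)w\,\mathrm{d}w\ge\frac{2}{3}\tau_{\min}-2\delta_0\|H_d\|_\infty\sigma_J^2\ge\frac{\tau_{\min}}{3}>0
\end{equation*}
by our choice of $\delta_0$. Hence the right side of the Taylor identity is non-negative, and the inequality follows from $-\phi(x)/\rho(t)\le 0$. Setting $T:=\max(T_1,T_2,T_3)$ completes the proof.

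The main technical obstacle is precisely the boundary-layer regime $d(x)\le\delta_0/2$: any $\phi\in C^1(\mathbb{R}^n)$ vanishing outside $\Omega_0$ must vanish to at least second order at $\partial\Omega_0$, yet using a higher-order vanishing such as $\phi\sim d^3$ would shrink the boundary contribution in $H_\phi$ to size $O(d)\to 0$, which cannot dominate the negative remainder uniformly in $d$. The quadratic vanishing $\phi\sim d^2$ is precisely tuned: it yields a boundary Hessian $2\nabla d(\nabla d)^{\top}$ of uniform size one, whose positivity is activated by the non-degeneracy $\tau_{\min}>0$ furnished by $J(0)>0$ in \textbf{(J)} together with the center-of-mass condition \textbf{(J1)}.
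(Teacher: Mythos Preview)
Your construction of $\phi=\psi(d(\cdot))$ with $\psi(t)=t^2$ near the boundary and the second-order Taylor expansion via \textbf{(J1)} is a natural and elegant idea, and it differs from the paper's route: the paper builds $\phi$ explicitly for a ball (a mollified version of $\frac{1}{R^{2}}(2R-|x|)^{2}$), applies Jensen's inequality in the outer layer where the function is designed to be convex, and then transfers to a general convex $\Omega_{0}$ by an affine map through the John ellipsoid.  Your argument in the bulk region $\{d(x)>\delta_{0}/2\}$ is fine.

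The boundary-layer step, however, has a genuine gap.  When $d(x)\le\delta_{0}/2$, you correctly note that $\xi_{s}=x-sw/\rho$ may leave $\Omega_{0}$, in which case $H_{\phi}(\xi_{s})=0$.  But then the lower bound
\[
\int_{\mathbb{R}^{n}}J(w)\bigl(\nabla d(\xi_{s})\cdot w\bigr)^{2}\,\mathrm{d}w\;\ge\;\tfrac12\tau_{\min}-\text{(small)}
\]
is not justified: the positive rank-one term $2\nabla d(\xi_{s})\nabla d(\xi_{s})^{\top}$ contributes only on the set $A_{s}:=\{w:\xi_{s}\in\Omega_{0}\}$, and for $d(x)\ll 1/\rho$ this set is approximately the half-space $\{w:\nabla d(x)\cdot w\le 0\}$.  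Your constant $\tau_{\min}$ is defined via a full integral over $\mathbb{R}^{n}$, and for an asymmetric kernel the restricted integral $\int_{A_{s}}J(w)(\nu\cdot w)^{2}\,\mathrm{d}w$ can be strictly smaller than $\tau_{\min}/3$.  The displayed conclusion $\int J(w)\,w^{\top}H_{\phi}(\xi_{s})w\,\mathrm{d}w\ge \tau_{\min}/3$ is therefore false in general, and the argument does not yield the nonnegativity you claim.  Without that positivity, the residual negative contribution from $2d(\xi_{s})H_{d}(\xi_{s})$ (or simply the loss of half the rank-one term) gives only a lower bound of order $-C/\rho^{2}$, which cannot beat $-\phi(x)/\rho=-d(x)^{2}/\rho$ uniformly as $d(x)\to 0$.

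The approach is salvageable, but not as written.  One fix is to replace $\tau_{\min}$ by the one-sided quantity
\[
\tau_{\min}^{-}:=\inf_{|\nu|=1}\int_{\{\nu\cdot w\le 0\}}J(w)(\nu\cdot w)^{2}\,\mathrm{d}w,
\]
argue from $J(0)>0$ and $\int Jw\,\mathrm{d}w=0$ that $\tau_{\min}^{-}>0$, and then show (using the convexity of $\Omega_{0}$ in \textbf{(B3)} and the approximate flatness of $\partial\Omega_{0}$ at scale $1/\rho$) that $A_{s}$ always contains such a half-space up to an $O(1/\rho)$-error.  This extra work is exactly what the paper sidesteps by arranging $\phi$ to be convex near $\partial\Omega_{0}$ and invoking Jensen's inequality directly.
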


\begin{proof}
We first consider the case where $\Omega_0$ is an open ball $B_{2R}$, defined as
$B_{2R}:=\{x\in\mathbb{R}^n: |x-x_0|\le2R\}$,
where $|x|$ denotes the Euclidean distance from $x$ to the origin. Without loss of generality, assume $x_0=0$.
Define
\begin{eqnarray*}
	\varphi(x)=\begin{cases}
		\, 1, &\quad x\in B_{R},
		\\
		\, 0,&\quad x\in \mathbb{R}^n\setminus B_{2R},
		\\
		\, \dfrac{1}{R^2}(\bar{x}-x)^2,&  \quad x\in B_{2R}\setminus B_{R} ,
	\end{cases}
\end{eqnarray*}
where $\bar{x}$ is the orthogonal projection of $x$ onto $\partial B_{2R}$.
Define a smooth function
\begin{eqnarray*}
	\psi(x)=\begin{cases}
		\, C\exp\left(-\dfrac{1}{1-x^2}\right), &\quad x\in B_{1},
		\\
		\, 0,&\quad x\in \mathbb{R}^n\setminus B_{1},
	\end{cases}
\end{eqnarray*}
where $C$ is the normalizing constant such that
$\int_{\mathbb{R}^n}\psi(x)\mathrm{d}x=1$.
For $\varepsilon>0$, let $\psi_{\varepsilon}(x):=\varepsilon^{-n}\psi(x/\varepsilon)$, which defines a mollifier.
Here, we take $\varepsilon=\frac{R}{100}$ and denote $\mathrm{supp}\psi_{\varepsilon}$ by $K$.
Define
\begin{eqnarray*}
	\eta(x)=\begin{cases}
		\, 1, &\quad x\in B_{R}+K,
		\\[2mm]
		\, 0,&\quad x\in \mathbb{R}^n\setminus (B_{R}+2K),
		\\[2mm]
		\, \dfrac{1}{2}+\dfrac{1}{2}\cos\left(\pi\frac{50|x|-51R}{R}\right),&  \quad x\in (B_{R}+2K)\setminus (B_{R}+K).
	\end{cases}
\end{eqnarray*}
Clearly, $\eta\in C^1(\mathbb{R}^n)$ and satisfies $\eta(x)=1$ for $|x|\le\frac{51R}{50}$ and $\eta(x)=0$ for
$|x|\ge\frac{26R}{25}$. We define
$$
\phi(x)=\eta(x)\varphi_\varepsilon(x)+(1-\eta(x))\varphi(x), \quad x\in\mathbb{R}^n,
$$
where $\varphi_\varepsilon=\psi_{\varepsilon}\ast\varphi$.
It follows that $\phi\in C^1(\mathbb{R}^n)$ which is strictly positive in $B_{2R}$ and vanishes outside it.

Next, we verify that the function $\phi$ satisfies the inequality \eqref{lower-bound}.

(i). Let $x\in B_R+3K$.
By Taylor's Theorem, we have
\begin{align}\label{auxiliary-function-1}
    &\int_{B_{2R}}J_{\frac{1}{\rho(t)}}(x-y)\phi(y)\,\mathrm{d}y-\phi(x)    \nonumber
    \\
	&  =  \int_{B_{2R}}J_{\frac{1}{\rho(t)}}(x-y)(\phi(y)-\phi(x))\,\mathrm{d}y   \nonumber
	\\
	&  =   \int_{B_{2R}}J_{\frac{1}{\rho(t)}}(x-y)(\nabla\phi(x)(y-x)+o(|y-x|))\,\mathrm{d}y   \nonumber
	\\
	&  =   \nabla\phi(x)\int_{B_{2R}-\{x\}}J_{\frac{1}{\rho(t)}}(-z)z\,\mathrm{d}z+\int_{B_{2R}}J_{\frac{1}{\rho(t)}}(x-y)o(|y-x|)\,\mathrm{d}y   \nonumber
	\\
	&  =   \int_{B_{2R}}J_{\frac{1}{\rho(t)}}(x-y)o(|y-x|)\,\mathrm{d}y,
\end{align}
for sufficiently large $\rho(t)$. Since $\phi$ is bounded below by a positive constant on $B_R+3K$,
there exists $T_1>0$, independent of $x$, such that
\begin{equation}\label{auxiliary-function-2}
	\int_{B_{2R}}J_{\frac{1}{\rho(t)}}(x-y)\phi(y)\,\mathrm{d}y-\phi(x)
	\ge -\dfrac{1}{\rho(t)}\phi(x), \quad x\in B_R+3K,\ t\ge T_1.
\end{equation}

(ii). Let $x\in \mathbb{R}^n\setminus (B_R+3K)$.
By the definition of $\phi$, we have
\begin{eqnarray*}
	\phi(x)=\varphi(x)=\begin{cases}
		\, 0,&\quad x\in \mathbb{R}^n\setminus B_{2R},
		\\[2mm]
		\, \dfrac{1}{R^2}(\bar{x}-x)^2,&  \quad x\in B_{2R}\setminus (B_R+2K),
	\end{cases}
\end{eqnarray*}
which is convex on $\mathbb{R}^n\setminus (B_R+2K)$. Take $R(x):=2R+|x|$, then we have
$$
x=\int_{B_{R(x)}}J_{\frac{1}{\rho(t)}}(x-y)y\,\mathrm{d}y, \quad \forall x\in \mathbb{R}^n\setminus (B_R+3K),\ t\ge T_2,
$$
for sufficiently large $T_2>0$. Using Jensen's inequality \cite{H-L-P1934}, we conclude
\begin{equation}\label{auxiliary-function-3}
\phi(x)\le \int_{B_{R(x)}}J_{\frac{1}{\rho(t)}}(x-y)\phi(y)\,\mathrm{d}y
=\int_{B_{2R}}J_{\frac{1}{\rho(t)}}(x-y)\phi(y)\,\mathrm{d}y,
\end{equation}
for any $x\in \mathbb{R}^n\setminus (B_R+3K)$ and $t\ge T_2$. Thus, we have established \eqref{lower-bound} for
the case where $\Omega_0$ is an open ball.

For a general bounded convex domain $\Omega_0$ with a smooth boundary $\partial\Omega_0$, based on the John ellipsoid,
we may apply an affine transformation $\mathcal{T}$ to map $\Omega_0$ onto an open ball. Using the same approach, we
may construct a function $\phi_{ball}(\cdot)$ corresponding to the ball. By the inverse affine transformation, we
obtain a function $\phi_{\Omega_0}(\mathcal{T}^{-1}(\cdot))\in C^1(\mathbb{R}^n)$. This function is convex in a
neighborhood of $\partial\Omega_0$,  and further satisfies \eqref{lower-bound}.
This completes the proof.
\end{proof}

\begin{lemma}\label{le5.1}
Assume $s(A)>0$. Then there exist constants $\delta_{0}>0$ and $N>0$ such that for any $\delta\in\left(0,\delta_{0}\right]$,
the function
$\underline u_{\delta}(y):= \delta \bar{z}\phi(y)$,
where $\bar{z}$ is the positive eigenvector of $A$ and $\phi\in C^1(\mathbb{R}^n)$ is given by Lemma \ref{auxiliary-function},
is a subsolution of  the equations corresponding to \eqref{eq5.3} in $\left[ N,+\infty\right)\times\overline\Omega_{0}$.
\end{lemma}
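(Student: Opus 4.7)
My plan is to verify the defining inequality for a subsolution pointwise on $[N,\infty)\times\overline\Omega_0$. Because $\underline u_\delta(y)=\delta\bar z\phi(y)$ is independent of time, the subsolution condition for \eqref{eq5.3} reduces to
\[
D\delta\bar z\left[\int_{\Omega_0}J_{\frac{1}{\rho(t)}}(y-z)\phi(z)\,\mathrm{d}z-\phi(y)\right]-nk\delta\bar z\phi(y)+f(\delta\bar z\phi(y))\ge 0.
\]
The first step is to invoke Lemma \ref{auxiliary-function}, which yields a constant $T>0$ such that for $t\ge T$ the bracketed difference is bounded below by $-\phi(y)/\rho(t)$. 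Multiplying by the positive vector $D\delta\bar z$ and substituting, it then suffices to verify the simpler inequality
\[
f(\delta\bar z\phi(y))\ge nk\delta\bar z\phi(y)+\frac{D\delta\bar z\phi(y)}{\rho(t)}.
\]

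The second step is a Taylor expansion of $f$ at the origin. From $f\in C^{0,0,1}$ and $f(0)=0$ we obtain $f(u)=\mathcal{D}f(0)u+r(u)$ with $|r(u)|/|u|\to 0$ as $|u|\to 0^+$. Combining this with the Perron--Frobenius identity $\mathcal{D}f(0)\bar z=(s(A)+nk)\bar z$, and dividing by $\delta\phi(y)$ at points $y$ where $\phi(y)>0$, the inequality reduces to the componentwise bound
\[
s(A)\bar z_i\ge\frac{d_i\bar z_i}{\rho(t)}-\frac{r_i(\delta\bar z\phi(y))}{\delta\phi(y)},\quad i=1,\dots,m.
\]
At points where $\phi(y)=0$ (in particular on $\partial\Omega_0$) every term in the original inequality vanishes by $f(0)=0$, so the subsolution condition holds trivially there.

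To close the argument I would set $\gamma:=\tfrac{1}{3}s(A)\min_i\bar z_i>0$, choose $N\ge T$ large enough that $d_i\bar z_i/\rho(t)\le\gamma$ for every $t\ge N$ and every $i$ (possible since $\rho(t)\to+\infty$), and then choose $\delta_0>0$ small enough that $|r(u)|\le(\gamma/\|\bar z\|_{\mathbb{R}^m})|u|$ whenever $|u|\le\delta_0\|\bar z\|_{\mathbb{R}^m}\|\phi\|_\infty$, with the additional constraint $\delta_0\bar z\|\phi\|_\infty\le v$ so that $\underline u_\delta\in\mathbb{M}$. For any $\delta\in(0,\delta_0]$ and $y$ with $\phi(y)>0$, these choices force $|r_i(\delta\bar z\phi(y))|/(\delta\phi(y))\le\gamma$, and the reduced bound becomes $s(A)\bar z_i\ge 2\gamma$, which holds by the definition of $\gamma$.

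The main technical subtlety I anticipate is the uniformity of the Taylor remainder estimate as $\phi(y)\to 0$ near $\partial\Omega_0$: naively, dividing by $\delta\phi(y)$ could appear to amplify the remainder. The key observation is that $|\delta\bar z\phi(y)|$ is shrinking at the same rate, so the normalized quantity $|r_i(\delta\bar z\phi(y))|/(\delta\phi(y))$ is controlled uniformly by the single scale $\delta_0\|\bar z\|_{\mathbb{R}^m}\|\phi\|_\infty$. This permits a single $y$-independent choice of $\delta_0$ that works on all of $\overline\Omega_0$, which is exactly what is needed to handle the degeneracy of $\phi$ on the boundary.
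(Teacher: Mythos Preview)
Your argument is correct and follows essentially the same route as the paper: both invoke Lemma~\ref{auxiliary-function} to control the nonlocal term by $-\frac{1}{\rho(t)}D\underline u_\delta$, linearize $f$ at the origin so that the eigen-relation $A\bar z=s(A)\bar z$ absorbs the $-nk$ term, and then pick $N$ large and $\delta_0$ small to dominate the diffusion loss and the linearization error by the margin $s(A)>0$. The only cosmetic difference is the bookkeeping of the remainder: the paper writes the linearization error in the cooperative form $f(u)-nku\ge Au-\epsilon_0(A+k_AI)u$ (which, applied to $\bar z$, gives the clean constant $(1-\epsilon_0)s(A)-\epsilon_0 k_A=\tfrac{s(A)}{2}$), whereas you carry a generic $o(|u|)$ remainder and close with the bound $|r_i(\delta\bar z\phi)|/(\delta\phi)\le\gamma$; both lead to the same threshold $\rho(t)\gtrsim \bar d/s(A)$.
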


\begin{proof}
We fix a real number $k_A>0$ such that $A+k_AI$ has positive diagonal entries and set $s(A):=\bar r$ and
$\epsilon_0:=\frac{\bar{r}}{2(\bar{r}+k_A)}$. By an elementary analysis (see, for example,
(6.10) in \cite{MR3420507}), there exists a vector $\hat{v}\gg 0$ in $\mathbb{R}^m$ satisfying
$$
f(u)-nku\ge Au-\epsilon_0(Au+k_Au), \quad u\in[0,\hat{v}]_{\mathbb{R}^m}.
$$
Next, we choose a sufficiently small $\delta_0>0$ such that $\underline{u}_{\delta}(x)\in[0,\hat{v}]_{\mathbb{R}^m}$
for all $x\in \overline{\Omega}_0$. It follows that for any $\delta\in(0,\delta_0]$, we have
\begin{align*}
    & D\int_{\Omega_0}J_{\frac{1}{\rho(t)}}(x-y)\underline{u}_{\delta}(y)\,\mathrm{d}y-D\underline{u}_{\delta}-nk\underline{u}_{\delta}+f(\underline{u}_{\delta})
    \\
	&  \ge  -\frac{1}{\rho(t)}D\underline{u}_{\delta}+\Big((1-\epsilon_0)A-\epsilon_0k_A I\Big)\underline{u}_{\delta}
	\\
	&  \ge  \Big(-\dfrac{\bar d}{\rho(t)}+(1-\epsilon_0)\bar{r}-\epsilon_0k_A\Big)\underline{u}_{\delta}
	\\
	&   =   \Big(-\dfrac{\bar d}{\rho(t)}+\dfrac{\bar{r}}{2}\Big)\underline{u}_{\delta},
\end{align*}
where $\bar d:=\max\left\lbrace  d_{1}, d_{2},\dots,d_{m}\right\rbrace $, and hence,
$$
D\int_{\Omega_0}J_{\frac{1}{\rho(t)}}(x-y)\underline{u}_{\delta}(y)\,\mathrm{d}y-D\underline{u}_{\delta}-nk\underline{u}_{\delta} +f(\underline{u}_{\delta})
\gg   0 \,\, \text{in} \,\, \mathbb{R}^m \text{ for all }x\in\Omega_0,
$$
provided that $\rho(t)\ge\frac{2\bar d}{\bar{r}}$.
The proof is completed.
\end{proof}

Next, we use the subsolution obtained in Lemma \ref{le5.1} to further construct a generalized subsolution (see, e.g., \cite{lam2022}) of
the equation corresponding to system \eqref{eq5.3}, which is instrumental in establishing the asymptotic behavior of solutions to the system.
Define
$\Omega_{\eta}:=\left\lbrace x\in \Omega_{0}: d(y,\partial \Omega_{0})> \eta\right\rbrace$.
It is clear that $ \Omega_{\eta}\nearrow \Omega_{0}$ as $\eta\to 0^{+}$. Let $\varrho_{\eta}(y)$ be a smooth cut-off function satisfying
$\varrho_{\eta}(y)=0 \text{ in } \Omega_{0}\setminus\Omega_{\eta}$ and $\varrho_{\eta}(y)=1$ in $\Omega_{2\eta}$.
If $s(A)>0$, by the Dancer-Hess connecting orbit theorem (see, e.g., \cite[Proposition 1]{MR1116922} and \cite[Proposition 2.1]{MR1100011}),
it follows that system \eqref{eq5.1} admits a connecting orbit $\alpha  : \mathbb{R}\to \mathbb{M}$ such that
\[
\alpha ( -\infty )= 0,\  \alpha (+\infty )= w_{e} ,\  \alpha' (t)\gg 0,  \  \forall t \in  \mathbb{R}.
\]
Since $\alpha(-\infty)=0$, there exists $\underline \tau= \underline \tau(\delta,\eta)$ such that
\begin{equation}\label{eq5.4}
	\alpha(\underline \tau)\le \underline u_{\delta}(y) \text{ for all } y \in \overline\Omega_{\eta}.
\end{equation}

\begin{lemma}\label{le5.2}
	For any given $u\in X$ with $u\mid_{\partial \Omega_{0}}=0$, then
	\[ \lim_{t\to+ \infty}\left\| \int_{\Omega_{0}}J_{\frac{1}{\rho(t)}}(\cdot-z)u(z)\,\mathrm{d}z- u\right\| _{X}=0 . \]
\end{lemma}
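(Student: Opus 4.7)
The plan is to recognize the integral as a mollifier-type convolution of $u$ against a kernel that concentrates to a delta as $\rho(t)\to+\infty$, and then invoke the standard approximation-of-identity argument. First, I would extend $u$ by zero outside $\overline{\Omega}_0$; because $u\in X$ vanishes on $\partial\Omega_0$, the extension is continuous on $\mathbb{R}^n$ with compact support, and hence uniformly continuous. Under this extension, $\int_{\Omega_0} J_{1/\rho(t)}(x-z)u(z)\,\mathrm{d}z = \int_{\mathbb{R}^n} J_{1/\rho(t)}(x-z)u(z)\,\mathrm{d}z$, which eliminates any complication coming from the boundary of the domain.

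Next, I would perform the rescaling $w=\rho(t)(x-z)$ and use the normalization $\int_{\mathbb{R}^n}J(w)\,\mathrm{d}w = 1$ from assumption \textbf{(J)} to write
\[
\int_{\Omega_0} J_{1/\rho(t)}(x-z)u(z)\,\mathrm{d}z - u(x) = \int_{\mathbb{R}^n} J(w)\bigl[u\bigl(x-w/\rho(t)\bigr) - u(x)\bigr]\,\mathrm{d}w.
\]
By assumption \textbf{(J1)}, $\mathrm{supp}(J)$ lies in some ball $B_R$, so taking $\|\cdot\|_{\mathbb{R}^m}$-norms and using $J\ge 0$ yields the pointwise bound
\[
\Bigl\|\int_{\Omega_0} J_{1/\rho(t)}(x-z)u(z)\,\mathrm{d}z - u(x)\Bigr\|_{\mathbb{R}^m} \le \sup_{|w|\le R}\bigl\|u(x-w/\rho(t)) - u(x)\bigr\|_{\mathbb{R}^m}.
\]

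To conclude, I fix $\epsilon>0$ and use uniform continuity of the extended $u$ to obtain $\delta>0$ with $\|u(y)-u(x)\|_{\mathbb{R}^m}<\epsilon$ whenever $|y-x|<\delta$; since $\rho(t)\to +\infty$ by \textbf{(B3)}, for $t$ large enough one has $R/\rho(t)<\delta$, so the right-hand side above is below $\epsilon$ uniformly in $x\in\overline{\Omega}_0$, giving convergence in $\|\cdot\|_X$. None of the steps is technically deep; the genuinely essential ingredients are that $u|_{\partial\Omega_0}=0$ makes the zero-extension continuous (so that a single modulus of continuity on $\mathbb{R}^n$ is available) and that the compact support of $J$ eliminates any tail contribution. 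Without the former, a boundary-layer term near $\partial\Omega_0$ would need separate handling, and without the latter one would have to split the integral and control the far-field of $J$ by its tightness; so the main work is just verifying that both hypotheses are used in the right places rather than any delicate estimate.
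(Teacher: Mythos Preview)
Your proof is correct and essentially identical to the paper's own argument: extend $u$ by zero, rewrite the difference as $\int J(w)[u(\cdot - w/\rho(t)) - u(\cdot)]\,\mathrm{d}w$ via the rescaling, and conclude by uniform continuity of the extension together with the compact support of $J$. The paper is slightly terser about why the zero-extension is uniformly continuous, whereas you spell out explicitly that $u|_{\partial\Omega_0}=0$ is what makes this work; otherwise the two proofs coincide.
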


\begin{proof}
We extend $u(y)$ to be zero for $y\in \mathbb{R}^{n}\setminus\Omega_{0}$.
Then we have
\[
\begin{aligned}
\int_{\Omega_{0}}J_{\frac{1}{\rho(t)}}(y-z)u(z)\,\mathrm{d}z- u(y) = &\int_{\mathbb{R}^{n}} J_{\frac{1}{\rho(t)}}(y-z)\left[ u(z)-u(y)\right]\,\mathrm{d}z
\\
=& \int_{{\rm supp}J}J(z)\left[ u(y+\frac{z}{\rho(t)})-u(y)\right]\mathrm{d}z.
\end{aligned}
\]
For any $\varepsilon>0$, there exists $t_{\varepsilon}>0$, independent of $z$, such that
$$
\left\| u(\cdot+\frac{z}{\rho(t)})-u(\cdot)\right\|_{X} \le \varepsilon, \quad\forall t\ge t_{\varepsilon},\, z\in {\rm supp}J,
$$
since $\rho (t)\to +\infty $ as $t \to+ \infty$.
Consequently, we obtain
$$
\left\| \int_{{\rm supp}J}J(z)\left[ u(\cdot+\frac{z}{\rho(t)})-u(\cdot)\right]\,\mathrm{d}z\right\|_{X}\le \varepsilon,\quad\forall t\ge t_{\varepsilon}.
$$
This completes the proof.
\end{proof}

Motivated by \cite[Lemma 3.2]{MR4784487}, we have the following result.

\begin{lemma}\label{le5.3}
Assume $s(A)>0$. For any given
$\delta\in \left( 0,\delta_{0}\right], \eta>0, \underline \tau, \bar \tau\in\mathbb{R}$ satisfying $\underline \tau<\bar \tau$
and \eqref{eq5.4}, there exist two positive constants $\beta=\beta(\underline \tau , \bar \tau)$ and
$\hat N=\hat N(\eta, \underline \tau , \bar \tau)>N$ such that for any $t_{0}\ge \hat N$, the function
\[
\underline u (t,y) := \max\left\lbrace \underline u_{\delta}(y), \alpha(\ln\varrho_{\eta}(y)+\bar \tau -(\bar \tau-\underline \tau)e^{-\beta(t-t_{0})}) \right\rbrace
\]
is a generalized subsolution of  the equations corresponding to \eqref{eq5.3} in $ \left[ t_{0},+\infty\right)\times \overline \Omega_{0} $.
\end{lemma}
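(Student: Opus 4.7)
The plan is to write $\underline{u}(t,y)=\max\{\underline{u}_{\delta}(y),\omega(t,y)\}$, where $\omega(t,y):=\alpha(\theta(t,y))$ and $\theta(t,y):=\ln\varrho_{\eta}(y)+\bar{\tau}-(\bar{\tau}-\underline{\tau})e^{-\beta(t-t_{0})}$, and to verify that each of the two pieces is a classical subsolution of the equation associated with \eqref{eq5.3} on $[t_{0},+\infty)\times\overline{\Omega}_{0}$; the generalized subsolution property of the pointwise maximum then follows from standard arguments for nonlocal cooperative systems. Lemma \ref{le5.1} already delivers the first piece on $[N,+\infty)\times\overline{\Omega}_{0}$, so it suffices to construct $\beta=\beta(\underline{\tau},\bar{\tau})>0$ and $\hat{N}=\hat{N}(\eta,\underline{\tau},\bar{\tau})>N$ for which $\omega$ is a subsolution on $[t_{0},+\infty)\times\overline{\Omega}_{0}$.

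For $y\in\Omega_{0}$ with $\varrho_{\eta}(y)=0$, we have $\omega\equiv 0$ and the subsolution inequality reduces to $D\int_{\Omega_{0}}J_{\frac{1}{\rho(t)}}(y-z)\omega(t,z)\,dz\ge 0$, which is trivial since $\omega\ge 0$ and $f(0)=0$. On $\{\varrho_{\eta}(y)>0\}$, the fact that $\alpha$ solves $\alpha'=-nk\alpha+f(\alpha)$ gives $-nk\omega+f(\omega)=\alpha'(\theta)$, so a direct computation reduces the subsolution inequality to
\[
\alpha'(\theta)\bigl[1-\beta(\bar{\tau}-\underline{\tau})e^{-\beta(t-t_{0})}\bigr]\ \ge\ D\Bigl[\omega(t,y)-\int_{\Omega_{0}}J_{\frac{1}{\rho(t)}}(y-z)\omega(t,z)\,dz\Bigr].
\]
Choosing $\beta:=\tfrac{1}{2(\bar{\tau}-\underline{\tau})}$ makes the bracket on the left at least $\tfrac{1}{2}$ for all $t\ge t_{0}$, and the task reduces to an upper bound on the nonlocal discrepancy on the right.

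Extending $\omega(t,\cdot)$ by zero to $\mathbb{R}^{n}$, the discrepancy equals $\int_{\mathbb{R}^{n}}J_{\frac{1}{\rho(t)}}(y-z)[\omega(t,y)-\omega(t,z)]\,dz$. On any compact subset of $\Omega_{\eta}$, $\omega(t,\cdot)$ is $C^{2}$ with bounds depending only on the compact set, $\underline{\tau}$ and $\bar{\tau}$; the vanishing first-moment assumption \textbf{(J1)} combined with a second-order Taylor expansion yields a discrepancy bound of order $C/\rho^{2}(t)$. Since $\alpha'(\theta)$ is bounded below by a positive constant on the same compact set, this is dominated by $\alpha'(\theta)/(2\bar d)$ for all $t\ge\hat{N}$ with $\hat{N}$ sufficiently large. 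In the thin layer near $\partial\Omega_{\eta}$, where $\nabla\ln\varrho_{\eta}$ blows up and the smooth estimate degenerates, I would use the monotonicity of $\alpha$ together with $\alpha(-\infty)=0$ and the strict positivity of $\underline{u}_{\delta}$ on $\overline{\Omega}_{\eta}$ (via Lemma \ref{auxiliary-function}) to locate a threshold $\eta_{0}>0$ such that $\omega(t,y)<\underline{u}_{\delta}(y)$ whenever $\varrho_{\eta}(y)<\eta_{0}$ and $t\ge t_{0}$; in that layer the maximum $\underline{u}$ coincides with $\underline{u}_{\delta}$, which is already a subsolution by Lemma \ref{le5.1}.

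I expect the main obstacle to lie in reconciling the degeneration of $\omega$ at $\partial\Omega_{\eta}$ with the requirement that the nonlocal discrepancy be uniformly dominated by $\alpha'(\theta)$. The quantitative link is the exponential decay $\alpha(\theta)\sim c\,\bar{z}\,e^{s(A)\theta}$ as $\theta\to -\infty$, which matches $\alpha'(\theta)$ up to the scalar factor $s(A)>0$, so both $\omega$ and $\alpha'(\theta)$ degenerate at comparable rates; this, combined with the max construction that hands control to $\underline{u}_{\delta}$ in the innermost strip, is what ultimately fixes the stated dependences $\beta=\beta(\underline{\tau},\bar{\tau})$ and $\hat{N}=\hat{N}(\eta,\underline{\tau},\bar{\tau})$.
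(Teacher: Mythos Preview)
Your overall strategy---write $\underline u=\max\{\underline u_\delta,\omega\}$ and verify the subsolution inequality piece by piece---is the right one, and your computation reducing the check for $\omega$ to a bound on the nonlocal discrepancy against $\alpha'(\theta)$ is correct. However, the exposition contains a logical slip and you miss the key simplification that the paper exploits.

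The slip is that you first announce you will show each piece is a classical subsolution on all of $[t_0,\infty)\times\overline\Omega_0$, and then later concede that $\omega$ fails this near $\partial\Omega_\eta$ and fall back on ``the max equals $\underline u_\delta$ there.'' What actually works (and what the paper does) is to split $[t_0,\infty)\times\overline\Omega_0$ into $\mathcal D:=\{\omega\ge\underline u_\delta\}$ and its complement, verify the inequality for $\omega$ only on $\mathcal D$ (with $\underline u\ge\omega$ in the nonlocal integral), and invoke Lemma~\ref{le5.1} on $\mathcal D^c$. You never need $\omega$ to be a subsolution where it is inactive.

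The simplification you miss is that condition \eqref{eq5.4} together with the strict monotonicity of $\alpha$ immediately gives $\theta(t,y)\ge\underline\tau$ for every $(t,y)\in\mathcal D$: indeed, on $\mathcal D$ one has $\alpha(\theta)\ge\underline u_\delta(y)\ge\alpha(\underline\tau)$ (the last inequality by \eqref{eq5.4}, noting that $\varrho_\eta(y)>0$ forces $y\in\overline\Omega_\eta$). Hence $\theta\in[\underline\tau,\bar\tau]$ on $\mathcal D$, and all of your degeneration concerns evaporate: $\alpha'(\theta)$ and $\alpha(\theta)$ are bounded above and below by positive constants depending only on $\underline\tau,\bar\tau$. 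The paper then sets $c_1:=\max_i\max_{[\underline\tau,\bar\tau]}\alpha_i'/\alpha_i$, $c_2:=\min_i\min_{[\underline\tau,\bar\tau]}\alpha_i'/(2\alpha_i)$, chooses $\beta=c_2/(c_1(\bar\tau-\underline\tau))$, and uses Lemma~\ref{le5.2} (plain uniform continuity, not a second-order Taylor expansion or the vanishing-first-moment part of \textbf{(J1)}) to bound the discrepancy uniformly by $\tfrac12\min_{[\underline\tau,\bar\tau]}\alpha'$.

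Your route---introduce a threshold $\eta_0$ so that $\{\omega\ge\underline u_\delta\}\subset\{\varrho_\eta\ge\eta_0\}$, then Taylor-expand on the latter set---can be made to work and ultimately recovers the same compactness for $\theta$, but it is strictly more laborious and drags in hypotheses (the first-moment condition in \textbf{(J1)}, $C^2$ bounds for $\omega$) that are unnecessary. The asymptotic heuristic $\alpha(\theta)\sim c\bar z e^{s(A)\theta}$ is not needed at all.
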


\begin{proof}
For any given $\eta>0$ and $\bar \tau >\underline \tau$, we define that
\begin{align*}
	c_{1}&:= c_{1}(\underline \tau, \bar \tau)=\max_{i=1,\dots,m}\left\lbrace \max_{s\in[\underline \tau, \bar \tau]}\dfrac{\alpha_{i}'(s)}{\alpha_{i}(s)}\right\rbrace >0,
	\\[2mm]
	c_{2}&:= c_{2}(\underline \tau, \bar \tau)=\min_{i=1,\dots,m}\left\lbrace \min_{s\in[\underline \tau, \bar \tau]}\dfrac{\alpha_{i}'(s)}{2\alpha_{i}(s)}\right\rbrace >0.
\end{align*}
We extend the function $\alpha(\ln\varrho_{\eta}(y)+\tau)$ to  $\overline\Omega_{0}$, defining it as $0$ at any point
$y\in\overline\Omega_{0} $ where $\varrho_{\eta}(y)=0 $, for all $\tau \in [\underline \tau, \bar \tau]$.
Clearly, under this extension, $\alpha(\ln\varrho_{\eta}(y)+\tau) $ is a continuous function on
$\overline\Omega_{0}\times [\underline \tau, \bar \tau ]$, satisfying $\alpha(\ln\varrho_{\eta}(y)+\tau)=0 $ for
$\partial \Omega_{0}\times [\underline \tau, \bar \tau ]$. According to Lemma \ref{le5.2}, there exists
$\hat N=\hat N (\eta, \underline \tau, \bar \tau)>N$ such that
\begin{equation}\label{eq5.5}
D\int_{\Omega_{0}} J_{\frac{1}{\rho(t)}}(y-z)\alpha(\ln\varrho_{\eta}(z)+\tau)\,\mathrm{d}z-D \alpha(\ln\varrho_{\eta}(y)+\tau)
\ge -\dfrac{1}{2}\min_{s\in[\underline \tau, \bar \tau]}\alpha'(s).
\end{equation}
Let $\beta:= \frac{c_{2}}{c_{1}(\bar \tau- \underline \tau)}$ and $\tau(t):=\bar \tau - (\bar \tau- \underline \tau)e^{-\beta(t-t_{0})}$,
where $t_{0}\ge \hat N$ is an arbitrarily chosen constant. Define the set
$$
\mathcal{D}:= \left\lbrace (t,y)\in \left[ t_{0}, +\infty \right)\times \overline \Omega_{0} : \alpha(\ln \varrho_{\eta}(y)+\tau (t))\ge \underline u_{\delta}(y)\right\rbrace.
$$
Clearly, it holds that
$\underline u(t,y)= \alpha(\ln \varrho_{\eta}(y)+\tau (t))$ in $\mathcal{D}$.
By \eqref{eq5.4}, and due to the monotonicity of $\alpha$, we have
$\ln \varrho_{\eta}(y)+\tau (t)\ge \underline \tau$ in $\mathcal{D}$.
At the same time, direct computation yields
\begin{align}\label{eq5.6}
	\dfrac{\partial}{\partial t} \alpha(\ln \varrho_{\eta}(y)+\tau(t))
	&  \le  c_{1}(\bar \tau- \underline \tau)\beta \alpha(\ln \varrho_{\eta}(y)+\tau (t))   \nonumber
	\\
	&  =   c_{2}\alpha(\ln \varrho_{\eta}(y)+\tau (t)) \  \text{ in } \mathcal{D},
\end{align}
and
\begin{align}\label{eq5.7}
	f(\alpha(\ln \varrho_{\eta}(y)+\tau (t))-nk\alpha(\ln \varrho_{\eta}(y)+\tau (t) )
	&  =   \alpha'(\ln \varrho_{\eta}(y)+\tau (t))   \nonumber
	\\
	&  \ge  \dfrac{1}{2}\min_{s\in[\underline \tau, \bar \tau]}\alpha'(s)+ c_{2}\alpha(\ln \varrho_{\eta}(y)+\tau (t)) \ \text{ in } \mathcal{D}.
\end{align}
Together with \eqref{eq5.5}--\eqref{eq5.7}, and using the fact that
$$
D\int_{\Omega_{0}} J_{\frac{1}{\rho(t)}}(y-z)\underline u(t,z)\,\mathrm{d}z \ge D\int_{\Omega_{0}} J_{\frac{1}{\rho(t)}}(y-z)\alpha(\ln\varrho_{\eta}(z)+\tau(t))\,\mathrm{d}z,
$$
we obtain that
\begin{equation}\label{eq5.8}
	\dfrac{\partial }{\partial t}\alpha(\ln \varrho_{\eta}(y)+\tau(t)) \le D\int_{\Omega_{0}} J_{\frac{1}{\rho(t)}}(y-z)\underline u(t,z)\,\mathrm{d}z- D \underline u -nk \underline u +f(\underline u)\ \text{ in } \mathcal{D}.
\end{equation}
By virtue of Lemma \ref{le5.1}, in the set
$$
\mathcal{D}^{c}:= \left\lbrace (t,y)\in \left[ t_{0}, +\infty \right)\times \overline \Omega_{0} : \underline u_{\delta}(y) >\alpha(\ln \varrho_{\eta}(y)+\tau (t))\right\rbrace,
$$
we have
\begin{equation}\label{eq5.9}
0=\dfrac{\partial \underline u_{\delta}(y) }{\partial t}\le D\int_{\Omega_{0}} J_{\frac{1}{\rho(t)}}(y-z)\underline u(t,z)\,\mathrm{d}z - D \underline u -nk \underline u +f(\underline u).
\end{equation}
Combining \eqref{eq5.8} and \eqref{eq5.9}, it follows that $\underline u(t,y)$ is a generalized subsolution of  the equations corresponding to system \eqref{eq5.3} in $ \left[ t_{0},+\infty\right)\times \overline \Omega_{0} $.
\end{proof}

Next, using the subsolution constructed in Lemmas \ref{le5.1} and \ref{le5.3}, we establish the asymptotic behavior
of solutions to system \eqref{eq5.3}. Furthermore, under assumption \textbf{(B3)}, we perturb system \eqref{eq5.3}
and employ the comparison principle to analyze the asymptotic behavior of solutions to system \eqref{intro-4}.

\begin{proposition}\label{th5.1}
Assume $s(A)>0$. For any $\phi\in X_{+}^{v}\setminus\{0\}$, the solution $u(t,y;\phi)$ of system \eqref{eq5.3}
satisfies $\lim\limits_{t\to+ \infty} u(t,y;\phi)= w_{e}$ uniformly in any compact subset of $\Omega_{0}$.
\end{proposition}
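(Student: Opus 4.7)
The plan is to sandwich $u(t,y;\phi)$ between a spatially constant supersolution coming from the limiting ODE \eqref{eq5.1} and the generalized subsolution of Lemma \ref{le5.3}, then pass to the limit as $t\to\infty$.

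For the upper bound, I would let $\bar w(t)$ solve the limiting ODE \eqref{eq5.1} with $\bar w(0)=v$. Assumption \textbf{(F)} (in the form $\dot\rho/\rho\to k$) keeps $\bar w(t)\in\mathbb{M}$, and Proposition \ref{pr5.1} gives $\bar w(t)\to w_e$. Since $\int_{\Omega_0} J_{\frac{1}{\rho(t)}}(y-z)\,\mathrm{d}z \le 1$ by \textbf{(J)}, the spatially constant map $(t,y)\mapsto\bar w(t)$ is a supersolution of \eqref{eq5.3} because the only nontrivial residual, $D\bar w(1-\int_{\Omega_0} J_{\frac{1}{\rho(t)}}(y-z)\,\mathrm{d}z)$, is nonnegative. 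Since $\bar w(0)=v\ge\phi(y)$, Theorem \ref{th2.2} yields $u(t,\cdot;\phi)\le \bar w(t)$ uniformly on $\overline\Omega_0$, and hence $\limsup_{t\to\infty} u(t,y;\phi)\le w_e$ uniformly in $y$.

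For the lower bound, fix an arbitrary compact $K\Subset\Omega_0$ and $\eta>0$ so small that $K\subset\Omega_{2\eta}$. Since $\phi\not\equiv 0$, Theorem \ref{th2.2}(2) gives $u(N+1,\cdot;\phi)\in\mathrm{Int}(\hat X_+)$, so one may pick $\delta\in(0,\delta_0]$ with $u(N+1,\cdot;\phi)\ge \underline u_\delta(\cdot)$ on $\overline\Omega_0$, where $\underline u_\delta$ is the stationary subsolution from Lemma \ref{le5.1}. Because $\underline u_\delta$ is a subsolution of \eqref{eq5.3} for all $t\ge N$, the comparison principle propagates this: $u(t,\cdot;\phi)\ge \underline u_\delta$ for every $t\ge N+1$. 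Now pick any $\bar\tau\in\mathbb{R}$, the corresponding $\underline\tau=\underline\tau(\delta,\eta)$ satisfying \eqref{eq5.4}, and some $t_0\ge\max\{N+1,\hat N(\eta,\underline\tau,\bar\tau)\}$. By the construction of the generalized subsolution $\underline u$ in Lemma \ref{le5.3} and by \eqref{eq5.4}, $\underline u(t_0,\cdot)\le \underline u_\delta\le u(t_0,\cdot;\phi)$. Applying comparison for generalized subsolutions (standard via the max-of-subsolutions argument and smooth approximation), we obtain $u(t,y;\phi)\ge \underline u(t,y)$ for all $t\ge t_0$ and $y\in\overline\Omega_0$.

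To conclude, observe that on $K\subset\Omega_{2\eta}$ the cut-off satisfies $\varrho_\eta\equiv 1$, so $\alpha(\ln\varrho_\eta(y)+\tau(t))=\alpha(\tau(t))\to\alpha(\bar\tau)$ uniformly on $K$ as $t\to\infty$. Thus $\liminf_{t\to\infty} u(t,y;\phi)\ge\alpha(\bar\tau)$ uniformly in $y\in K$. Since $\bar\tau$ was arbitrary and $\alpha(+\infty)=w_e$, sending $\bar\tau\to+\infty$ yields $\liminf\ge w_e$. Combined with the upper bound, we get $u(t,y;\phi)\to w_e$ uniformly on $K$. The main obstacle is reconciling the two subsolutions: Lemma \ref{le5.1} is valid only for $t\ge N$, while Lemma \ref{le5.3} requires a starting time $t_0\ge\hat N(\eta,\underline\tau,\bar\tau)$ that depends on parameters fixed \emph{after} $\delta$. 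The remedy is to exploit the time-independence of $\underline u_\delta$ to carry the positive lower bound forward past any desired $\hat N$, and only then switch to the time-evolving $\underline u$ of Lemma \ref{le5.3}. A secondary subtlety is extending the comparison principle to max-type generalized subsolutions, which here follows from approximation within the framework established earlier in the paper.
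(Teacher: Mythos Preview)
Your proposal is correct and follows essentially the same strategy as the paper: an ODE supersolution from \eqref{eq5.1} for the upper bound, and for the lower bound the stationary subsolution $\underline u_\delta$ of Lemma \ref{le5.1} followed by the generalized subsolution of Lemma \ref{le5.3}, with the conclusion obtained by sending $\bar\tau\to+\infty$. Your handling of the timing issue (starting the comparison at $N+1$ rather than at time $1$) and your explicit flagging of the max-type generalized subsolution are in fact slightly more careful than the paper's own write-up, which simply invokes Theorem \ref{th2.2} directly.
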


\begin{proof}
According to Theorem \ref{th2.2}, the solution $u(t,\cdot;\phi)\in\mathrm{Int}(X_{+}^{v})$ for $t>0$. We can choose
$\delta \in \left( 0, \delta_{0}\right] $ small enough such that $u(1,y;\phi)\ge \underline u_{\delta}(y)$ for all
$y\in \overline \Omega_{0}$. It follows that
$u(t,y;\phi)\ge \underline u_{\delta}(y) $ in $\left[ 1,+\infty\right) \times \overline \Omega_{0}$.

Let $\eta>0$ be an arbitrarily given constant,  $\underline \tau $ be chosen to satisfy \eqref{eq5.4}, and $\bar \tau>\underline \tau$
be another arbitrarily given constant. Set $t_{0}=\max\left\lbrace 1,\hat N\right\rbrace $,  and let $\underline u$ be
defined as in Lemma \ref{le5.3} with time $t_{0}$. Clearly, $\underline u(t_{0},y)= \underline u_{\delta}(y)$
on $\overline\Omega_{0} $ due to \eqref{eq5.4}. According to Theorem \ref{th2.2}, we have
$u(t,y;\phi)\ge \underline u(t,y) $ in $\left[ t_{0},+\infty\right) \times \overline \Omega_{0}$.
It follows that
\[
u(t,y;\phi)\ge \alpha(\ln \varrho_{\eta}(y)+\tau (t)) =\alpha(\tau (t)), \ \forall y\in \Omega_{2\eta}, t\in \left[ t_{0},+\infty\right),
\]
since $\varrho_{\eta}(y)=1 $ in $\Omega_{2\eta}$. Due to $\lim_{t\to+ \infty}\tau (t)=\bar \tau$, we have
\[
\liminf_{t \to +\infty} u(t,y;\phi) \ge \alpha(\bar \tau) \text{ uniformly  for } y \in \Omega_{2\eta}.
\]
Letting $\bar \tau \to +\infty$, then
\begin{equation}\label{eq5.10}
\liminf_{t \to +\infty} u(t,y;\phi) \ge w_{e} \text{ uniformly  for } y \in \Omega_{2\eta}.
\end{equation}
On the other hand, let $w_{0}=\max\limits_{y\in \overline\Omega_{0}}\phi(y)$.
By comparison principle, we have $u(t,y;\phi)\le w(t;w_{0})$ for $t\ge 0$ and $y\in \overline \Omega_{0}$,
where $w(t;w_{0})$ is the solution of system \eqref{eq5.1} with $w(0;w_{0})=w_{0}$. By Proposition \ref{pr5.1},
we have
\begin{equation}\label{eq5.11}
	\limsup_{t\to +\infty} u(t,y;\phi)\le w_{e} \text{ uniformly  for } y \in \Omega_{0}.
\end{equation}
Together with \eqref{eq5.10}--\eqref{eq5.11}, we deduce
$\lim\limits_{t\to+ \infty} u(t,y;\phi)=w_{e}$
uniformly for $ y \in \Omega_{2\eta}$.
By the arbitrariness of $\eta$, it follows that the conclusion holds for any compact subset of $\Omega_{0}$.
\end{proof}

\begin{proposition}\label{c5.1}
Assume $s(A)>0$. For any $\phi\in X_{+}^{v}\setminus\{0\}$, the solution $u(t,y;\phi)$ of
system \eqref{intro-4} satisfies $\lim\limits_{t\to+ \infty} u(t,y;\phi)= w_{e}$ uniformly in any compact
subset of $\Omega_{0}$.
\end{proposition}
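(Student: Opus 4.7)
The plan is to sandwich the solution of \eqref{intro-4} between a subsolution and a supersolution derived from the constant-coefficient system \eqref{eq5.3} and from the scalar ODE \eqref{eq5.2}, exploiting the asymptotic relation $\frac{\dot\rho(t)}{\rho(t)}\to k$ in \textbf{(B3)}. Fix $\phi\in X_{+}^{v}\setminus\{0\}$ and, given any small $\varepsilon>0$ with $s(A)-\varepsilon>0$, pick $t_{\varepsilon}$ so that $nk-\varepsilon\le n\frac{\dot\rho(t)}{\rho(t)}\le nk+\varepsilon$ for all $t\ge t_{\varepsilon}$.

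For the upper bound, I would set $w_{0}:=\max_{y\in\overline\Omega_{0}}\phi(y)$ and apply Theorem \ref{th2.2} with the spatially constant supersolution $w(t;w_{0})$ of \eqref{eq5.2}, obtaining $u(t,y;\phi)\le w(t;w_{0})$ for all $t\ge0$, $y\in\overline\Omega_{0}$. Since \eqref{eq5.1} is the autonomous limit of \eqref{eq5.2} and, by Proposition \ref{pr5.1}, $w_{e}$ is globally asymptotically stable for \eqref{eq5.1} in $\mathbb{M}\setminus\{0\}$, the theory of asymptotically autonomous semiflows (invoked just as for the $s(A)\le0$ case on page preceding Proposition \ref{pr5.2}) yields $\lim_{t\to\infty}w(t;w_{0})=w_{e}$. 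Hence $\limsup_{t\to\infty}u(t,y;\phi)\le w_{e}$ uniformly on $\overline\Omega_{0}$.

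For the lower bound, I would introduce the perturbed system obtained from \eqref{eq5.3} by replacing $nk$ with $nk+\varepsilon$. Let $A_{\varepsilon}:=-(nk+\varepsilon)I+\mathcal{D}f(0)$, so that $s(A_{\varepsilon})=s(A)-\varepsilon>0$ and the perturbed ODE analogue of \eqref{eq5.1} admits a unique positive equilibrium $w_{e}^{\varepsilon}$ with $w_{e}^{\varepsilon}\to w_{e}$ as $\varepsilon\to0^{+}$ (by monotone dependence in $\varepsilon$, uniqueness of the positive zero of $-(nk+\varepsilon)w+f(w)$, and the continuous Perron vector of $A_{\varepsilon}$). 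Lemmas \ref{le5.1} and \ref{le5.3} go through verbatim for this perturbed system, producing a generalized subsolution $\underline u^{\varepsilon}(t,y)$ on $[t_{0}^{\varepsilon},\infty)\times\overline\Omega_{0}$ of the perturbed equation. Since for any nonnegative function $\underline u^{\varepsilon}$ the inequality $-(nk+\varepsilon)\underline u^{\varepsilon}\le -n\frac{\dot\rho(t)}{\rho(t)}\underline u^{\varepsilon}$ holds once $t\ge\max\{t_{\varepsilon},t_{0}^{\varepsilon}\}$, the same $\underline u^{\varepsilon}$ is automatically a (generalized) subsolution of \eqref{intro-4} on that time interval. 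After choosing $\delta,\underline\tau,\bar\tau$ so that $u(t_{0}^{\varepsilon},y;\phi)\ge\underline u^{\varepsilon}(t_{0}^{\varepsilon},y)$ on $\overline\Omega_{0}$ (possible since $u(t,\cdot;\phi)\in\mathrm{Int}(X_{+})$ for $t>0$ by Theorem \ref{th2.2}), Theorem \ref{th2.2} gives $u(t,y;\phi)\ge\underline u^{\varepsilon}(t,y)$ thereafter, and the argument of Proposition \ref{th5.1} (letting $\bar\tau\to\infty$ and $\eta\to0^{+}$) yields $\liminf_{t\to\infty}u(t,y;\phi)\ge w_{e}^{\varepsilon}$ uniformly on any compact $K\subset\Omega_{0}$.

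Letting $\varepsilon\to0^{+}$ in the lower bound and combining with the upper bound delivers $\lim_{t\to\infty}u(t,y;\phi)=w_{e}$ uniformly on compact subsets of $\Omega_{0}$. I expect the main obstacle to be verifying that the subsolution construction of Lemmas \ref{le5.1} and \ref{le5.3}, which relies in an essential way on the constancy of $nk$ (through the bound $\frac{\bar r}{2}$ obtained after absorbing $\frac{\bar d}{\rho(t)}$), survives the replacement $nk\rightsquigarrow nk+\varepsilon$ and, more importantly, transfers to the genuinely time-dependent damping $n\frac{\dot\rho(t)}{\rho(t)}$ of \eqref{intro-4}; the monotonicity observation above is what makes this transfer painless, and the uniform (in $\varepsilon$) control of the threshold time $t_{0}^{\varepsilon}$ through the rate bound in \textbf{(B3)} is the only point that requires slight care.
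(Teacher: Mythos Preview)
Your proposal is correct and follows essentially the same sandwich strategy as the paper: bound above via the spatially constant ODE solution and below via the $\varepsilon$-perturbed version of system \eqref{eq5.3}, then let $\varepsilon\to0^{+}$. The only cosmetic differences are that the paper obtains the upper bound by comparing with the \emph{autonomous} perturbed ODE $u'=-nku+\varepsilon u+f(u)$ on $[t_\varepsilon,\infty)$ (avoiding the asymptotically autonomous machinery you invoke for \eqref{eq5.2}), and for the lower bound the paper compares $u(t,y;\phi)$ with the full solution $\psi$ of the perturbed auxiliary PDE and then cites Proposition~\ref{th5.1}, rather than transplanting the generalized subsolution $\underline u^{\varepsilon}$ of Lemmas~\ref{le5.1}--\ref{le5.3} directly into \eqref{intro-4} as you do; your monotonicity observation $-(nk+\varepsilon)\underline u^{\varepsilon}\le -n\frac{\dot\rho}{\rho}\underline u^{\varepsilon}$ makes your shortcut legitimate and the two routes are equivalent.
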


\begin{proof}
Since $s(A)>0$, there exists $\varepsilon_{0}>0$ such that $s(A\pm\varepsilon I)>0$ for all $\varepsilon\in (0,\varepsilon_{0}]$.
By assumption \textbf{(B3)}, there exists $t_{\varepsilon}$ such that
$$
nk-\varepsilon\le n\dfrac{\dot\rho(t)}{\rho(t)}\le nk+\varepsilon \ \textrm{ for all }t\ge t_{\varepsilon}.
$$
Let $u_{\pm \varepsilon}^{\ast}$ denote the positive equilibrium of the system
$$
u'(t)= -nku \pm\varepsilon u +f(u), \quad u\in\mathbb{M}.
$$
It is straightforward to verify that $\lim_{\varepsilon\to 0^{+}} u_{\pm \varepsilon}^{\ast}=w_{e}$.
Notice that every nontrivial solution of
$u'(t)= -nku +\varepsilon u +f(u)$
is a supersolution for \eqref{intro-4} in
$\left[ t_{\varepsilon}, +\infty\right) \times \overline\Omega_{0}$.
By Theorem \ref{th2.2} and Proposition \ref{pr5.1}, it follows that
$$
\limsup_{t\to +\infty}u(t,y;\phi)\le u_{+\varepsilon}^{\ast}\ \textrm{ uniformly for } y\in \overline \Omega_{0}.
$$
Letting $\varepsilon\to 0^{+}$, we obtain
\begin{equation}\label{eq5.12}
	\limsup_{t\to +\infty}u(t,y;\phi)\le w_{e} \text{ uniformly for } y\in \overline \Omega_{0}.
\end{equation}
On the other hand, consider the following auxiliary problem:
\[
\left\{
{\begin{array}{*{20}{l}}
	\dfrac{\partial u}{\partial t}=D\displaystyle\int_{\Omega_{0}}J_{\frac{1}{\rho(t)}}(y-z)u(t,z)\,\mathrm{d}z-Dv
    -n ku -\varepsilon u+f(u), & t>t_{\varepsilon}, y \in \Omega_{0},
        \\[4mm]
	u(t_{\varepsilon},y)=u(t_{\varepsilon},y;\phi),  & y \in \Omega_{0},
\end{array}}
\right.
\]
and denote its solution by $\psi$.
By Theorem \ref{th2.2}, it holds that
$u(t,y;\phi)\ge \psi(t,y)$ for $t\ge t_{\varepsilon}$ and $y \in \overline \Omega_{0}$.
Furthermore, by Proposition \ref{th5.1}, we have
$\lim\limits_{t\to+ \infty} \left\| \psi(t,\cdot)- u_{-\varepsilon}^{\ast}\right\| _{C(\Omega')}= 0$,
for any compact subset $\Omega'\subset\Omega_0$.
It follows that
\begin{equation}\label{eq5.13}
\liminf_{t \to +\infty} u(t,y;\phi) \ge w_{e}
\end{equation}
uniformly for $y$ in any compact subset of $\Omega_{0}$.
Combining \eqref{eq5.12} and \eqref{eq5.13}, the proof is complete.
\end{proof}

By combining Propositions \ref{pr5.2} and  \ref{c5.1}, we
have the following  threshold-type results on the global dynamics of system \eqref{intro-4}.

\begin{theorem}\label{th5.2}
	The following statements are valid for system \eqref{intro-4}:
\begin{itemize}
	\item [(1)] If $s(A)\le0$, then $0$ is globally asymptotically stable in $X_{+}^{v}$.
	\item [(2)] If $s(A)>0$, for each $u_{0}\in X_{+}^{v}\setminus\{0\}$,  the solution $u(t,y;u_{0})$ of system \eqref{intro-4} satisfies $\lim\limits_{t\to+ \infty} u(t,y,u_{0})=w_{e}$ uniformly in any compact subset of $\Omega_{0}$, where $w_{e}$ is shown as in Proposition \ref{pr5.1}(2).
\end{itemize}
\end{theorem}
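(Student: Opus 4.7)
The plan is to observe that Theorem \ref{th5.2} is essentially the synthesis of the two preceding propositions, namely Proposition \ref{pr5.2} (handling $s(A)\le 0$) and Proposition \ref{c5.1} (handling $s(A)>0$). Accordingly, I would organize the proof into two short steps corresponding to the two dichotomies, and the only genuine additional content is the upgrade from attraction to asymptotic stability in part (1).

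For part (1), Proposition \ref{pr5.2} already yields that, whenever $s(A)\le 0$, every solution starting in $X_{+}^{v}$ satisfies $\|u(t,\cdot;u_{0})\|_{X}\to 0$ as $t\to+\infty$. This establishes global attraction of $0$. To promote this to global asymptotic stability I would invoke the Liapunov stability of $0$, which follows from the comparison principle for \eqref{intro-4} established in Theorem \ref{th2.2} together with the abstract observation recorded in Remark \ref{pr3.2}: any locally attractive solution of a system satisfying the comparison principle is automatically Liapunov stable. Since $0$ is globally attractive it is in particular locally attractive, and combining Liapunov stability with global attraction yields global asymptotic stability in $X_{+}^{v}$.

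For part (2), the desired conclusion is literally the content of Proposition \ref{c5.1}, so it suffices to cite that result: for each $u_{0}\in X_{+}^{v}\setminus\{0\}$, the solution $u(t,y;u_{0})$ converges to $w_{e}$ as $t\to+\infty$, uniformly on any compact subset of $\Omega_{0}$.

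The main obstacle is therefore not in the proof of Theorem \ref{th5.2} itself but in the two supporting propositions, and in particular in Proposition \ref{c5.1} for the expanding case $s(A)>0$. The hard work is concentrated in the chain of auxiliary constructions: the function $\phi$ of Lemma \ref{auxiliary-function}, which partially decouples the vanishing factor $\frac{1}{\rho(t)}$ from the nonlocal diffusion; the pointwise subsolution $\underline{u}_{\delta}(y)=\delta\bar z\phi(y)$ of Lemma \ref{le5.1}; the generalized subsolution of Lemma \ref{le5.3}, assembled from $\underline{u}_{\delta}$ together with the connecting orbit $\alpha$ of the limiting ODE \eqref{eq5.1}; and the perturbation/squeeze argument in Proposition \ref{c5.1}, which sandwiches the solution of \eqref{intro-4} between the ODE upper bound $w(t;w_{0})$ of \eqref{eq5.2} and a perturbed version of \eqref{eq5.3} to which Proposition \ref{th5.1} applies. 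Since all of these components are already in place, the proof of Theorem \ref{th5.2} reduces to the two-line combination described above.
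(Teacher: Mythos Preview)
Your proposal is correct and matches the paper's own treatment: the paper simply states that Theorem~\ref{th5.2} follows by combining Propositions~\ref{pr5.2} and~\ref{c5.1}, without further argument. Your only addition is making explicit the upgrade from global attraction to global asymptotic stability in part~(1) via Remark~\ref{pr3.2}, which the paper leaves tacit but uses in the same way elsewhere (e.g., Theorems~\ref{th2.3} and~\ref{th3.1}).
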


As a consequence of Theorems \ref{th2.2} and \ref{generalized-dynamical}, we also have the following result.
\begin{theorem}
For system \eqref{intro-4}, the following statements are valid:
\begin{enumerate}
	\item[(1)]  $s(A)\le0$, then $0$ is globally asymptotically stable in $\hat X_{+}^{v}$.
	\item[(2)]  If $s(A)>0$, for each $u_{0}\in \hat X_{+}^{v}\setminus\hat{\mathbf 0}$, the solution $u(t,y;u_{0})$ of system \eqref{intro-4} satisfies $\lim\limits_{t\to+ \infty} u(t,y,u_{0})=w_{e}$ uniformly in any compact subset of $\Omega_{0}$.
\end{enumerate}
\end{theorem}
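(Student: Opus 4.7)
The plan is to reduce the measurable initial-data setting to the continuous initial-data setting already handled by Theorem \ref{th5.2}, using the instantaneous regularization furnished by the strong comparison principle of Theorem \ref{th2.2}(2).

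For assertion (1), when $s(A)\le 0$, given any $u_{0}\in \hat X_{+}^{v}$, I would observe that the spatially homogeneous solution $w(t;v)$ of the ODE system $w'(t)=-n\frac{\dot\rho(t)}{\rho(t)}w+f(w)$ with $w(0)=v$ furnishes a supersolution of \eqref{intro-4} by assumption \textbf{(F)}. Theorem \ref{th2.2} then yields $u(t,y;u_{0})\le w(t;v)$ pointwise on $[0,+\infty)\times\overline\Omega_{0}$. Since \eqref{eq5.1} is the limiting system of this ODE and $s(A)\le 0$, the theory of asymptotically autonomous semiflows (see \cite[Section 1.2.1]{MR3643081}) combined with Proposition \ref{pr5.1}(1) yields $w(t;v)\to 0$, so $\|u(t,\cdot;u_{0})\|_{\hat X}\to 0$. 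Liapunov stability of $0$ in $\hat X_{+}^{v}$ then follows from Remark \ref{pr3.2}.

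For assertion (2), when $s(A)>0$, my plan is to first invoke Theorem \ref{th2.2}(2) to show that for any $u_{0}\in \hat X_{+}^{v}\setminus\hat{\mathbf 0}$ one has $u(1,\cdot;u_{0})\in \mathrm{Int}(\hat X_{+})$. Concretely, there exists $c>0$, with $c\le \min_{i}v_{i}$, such that $u(1,y;u_{0})\ge c\mathbf{1}$ pointwise in $\overline\Omega_{0}$, where $\mathbf{1}:=(1,\dots,1)^{T}$. Setting $\tilde u_{0}:=c\mathbf{1}\in X_{+}^{v}\setminus\{0\}$ provides a continuous initial datum dominated by $u(1,\cdot;u_{0})$, and the comparison principle gives $u(t+1,y;u_{0})\ge u(t,y;\tilde u_{0})$ for all $t\ge 0$ and $y\in\overline\Omega_{0}$. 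Theorem \ref{th5.2}(2) applied to $\tilde u_{0}$ then delivers $u(t,\cdot;\tilde u_{0})\to w_{e}$ uniformly on any compact subset of $\Omega_{0}$, whence $\liminf_{t\to+\infty}u(t,y;u_{0})\ge w_{e}$ uniformly on such subsets. For the matching upper bound, I would repeat the perturbation argument of Proposition \ref{c5.1}: for each small $\varepsilon>0$, the ODE $w'=-nkw+\varepsilon w+f(w)$ eventually produces a supersolution of \eqref{intro-4}, whose unique positive equilibrium $u_{+\varepsilon}^{\ast}$ tends to $w_{e}$ as $\varepsilon\to 0^{+}$, giving $\limsup_{t\to+\infty}u(t,y;u_{0})\le w_{e}$ uniformly in $\overline\Omega_{0}$.

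The main obstacle I anticipate is justifying the strong positivity $u(1,\cdot;u_{0})\in \mathrm{Int}(\hat X_{+})$ via Theorem \ref{th2.2}(2). This rests on the cooperativity and irreducibility of the matrix-valued function $H(t,x,u(t,\cdot;u_{0}),0)$ produced when one rewrites the difference of the $u$-equation and the zero solution in the form required by Theorem \ref{th2.2}; this structure is underwritten by assumption \textbf{(F)}, together with the positivity $\mathcal{K}_{i}(t,0)=J_{1/\rho(t)}(0)>0$ inherited from \textbf{(J)}. Once this bootstrap from measurable to continuous initial data is secured, the rest reduces to the already-established arguments of Theorem \ref{th5.2} and Proposition \ref{c5.1}, so no fundamentally new analytic difficulty arises.
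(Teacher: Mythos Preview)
Your approach is essentially the paper's approach: use the strong positivity furnished by Theorem \ref{th2.2}(2) to upgrade measurable initial data to an element of $\mathrm{Int}(\hat X_{+})$ at a later time, then invoke the already-established continuous-data results. The paper compresses this into one line citing Theorems \ref{th2.2} and \ref{generalized-dynamical}, whose proof contains exactly the regularization-then-restart argument you spell out.

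One small slip: system \eqref{intro-4} is nonautonomous through $\rho(t)$, so the inequality $u(t+1,y;u_{0})\ge u(t,y;\tilde u_{0})$ does not follow from the comparison principle as stated, since the left side solves the equation with coefficients at time $t+1$ while the right side solves it with coefficients at time $t$. What you actually obtain is $u(t,y;u_{0})\ge \hat u(t,y)$ for $t\ge 1$, where $\hat u$ is the solution of \eqref{intro-4} launched at time $1$ with $\hat u(1,\cdot)=\tilde u_{0}$. This causes no real difficulty, because the entire machinery of Lemmas \ref{le5.1}--\ref{le5.3} and Propositions \ref{th5.1}--\ref{c5.1} only uses the asymptotic conditions \textbf{(B3)} on $\rho$ (which are shift-invariant) and constructs subsolutions valid on $[t_{0},\infty)$ for arbitrary large $t_{0}$; so the same conclusion $\hat u(t,\cdot)\to w_{e}$ locally uniformly holds. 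Once you correct this notational point, your argument goes through.
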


\section{Numerical simulations}

In this section, we present numerical simulations based on a practical model to illustrate our analytic results.
Specifically, we consider a West Nile (WN) virus model proposed in \cite{Wonham,MR2224757}, which is described by the following ODE dynamical system:

\begin{equation}\label{eq6.1}
\begin{aligned}
	\frac{\mathrm{d} L_V}{\mathrm{d} t} &= b_V (S_V + E_V + I_V) - m_V L_V - d_L L_V, \\
	\frac{\mathrm{d} S_V}{\mathrm{d} t} &= -\alpha_V \beta_R \frac{I_R}{N_R} S_V + m_V L_V - d_V S_V, \\
	\frac{\mathrm{d} E_V}{\mathrm{d} t} &= \alpha_V \beta_R \frac{I_R}{N_R} S_V - (\kappa_V + d_V) E_V, \\
	\frac{\mathrm{d} I_V}{\mathrm{d} t} &= \kappa_V E_V - d_V I_V,\\
	\frac{\mathrm{d} S_R}{\mathrm{d} t} &= -\alpha_R \beta_R \frac{S_R}{N_R} I_V + \eta_R R_R, \\
	\frac{\mathrm{d} I_R}{\mathrm{d} t} &= \alpha_R \beta_R \frac{S_R}{N_R} I_V - (\delta_R + \gamma_R) I_R, \\
	\frac{\mathrm{d} R_R}{\mathrm{d} t} &= \gamma_R I_R - \eta_R R_R , \\
	\frac{\mathrm{d} X_R}{\mathrm{d} t} &= \delta_R I_R.
\end{aligned}
\end{equation}
The meanings of the parameters in the system \eqref{eq6.1} are provided in Table \ref{table:variables}.

\begin{table}[h!]
	\centering
	\caption{Description of variables and parameters of the model \eqref{eq6.1}.} 
	\label{table:variables} \vspace{5pt}
	\small
	\begin{tabular}{c p{10cm}} 
		\toprule
		\textbf{Parameters} & \textbf{Description} \\
		\midrule
		$L_V$, $S_V$, $E_V$, $I_V$ & the numbers of the  classes of larval, susceptible, exposed and infectious (infective) adult in female mosquito, respectively  \\
		$S_R$, $I_R$, $R_R$, $X_R$ & the numbers of the  classes of  susceptible, infectious, removed and dead in bird, respectively \\
		$N_R = S_R + I_R + R_R$    & total live population of bird\\
		$b_V$                 & mosquito birth rate\\
		$d_L, d_V$            & larval, adult mosquito death rate\\
		$\delta_R$            & bird death rate, caused by virus\\
		$\alpha_V, \alpha_R$  & WN transmission probability per bite to mosquitoes, birds\\
		$\beta_R$             &  biting rate of mosquitoes on birds\\
		$m_V$                 & mosquito maturation rate\\
		$\kappa_V$            & virus incubation rate in mosquitoes\\
		$\gamma_R$            & bird recovery rate from WN\\
		$\eta_R$              & bird loss of immunity rate\\
		\bottomrule
	\end{tabular}
\end{table}

As noted by Lewis et al. \cite{MR2224757}, \textit{``both bird and mosquito movements actually involve a mixture of local interactions, long-distance
 dispersal and, in the case of birds, migratory flights"}. Thus, it is natural to incorporate nonlocal diffusion into system \eqref{eq6.1}.
Under appropriate assumptions (\cite{MR2224757}), this system simplifies to:

\begin{equation}\label{eq6.2}
	\begin{cases}
		{\begin{aligned}
				\dfrac{\partial I_{V}}{\partial t} =  D_V & \int_{\Omega_0}J(x-z) I_{V}(z,t)\,\mathrm{d}z -D_V I_{V}
				\\& +\alpha_{V}\beta_R \frac{  I_{R}}{N_R } \left( A_{V} - I_{V} \right) -  d_{V}  I_{V},
		\end{aligned} }& x \in \Omega_0, \ t > 0, \\
		{\begin{aligned}
				\dfrac{\partial I_{R}}{\partial t}  =  D_{R} &\int_{\Omega_0}J(x-z)I_{R}(z,t)\,\mathrm{d}z-D_{R}I_{R}\\& +\alpha_{R}\beta_R \frac{I_V }{N_R} \left( N_{R} - I_{R}\right)   - \gamma_R  I_{R},
		\end{aligned}}  & x\in \Omega_0, \ t > 0,
	\end{cases}
\end{equation}
where $D_V $ and $D_{R} $ denote the diffusion coefficients with $D_V\ll D_{R}$ as mosquitoes disperse more slowly than birds, and $A_{V}:= S_V + E_V + I_V$ and $N_R$ are constants.
We now consider the nonlocal system \eqref{eq6.2} on a time-varying domain and use the approach discussed in Section 2 to rewrite \eqref{eq6.2} in the following form. For convenience, the unknown functions retain their original symbols.
\begin{equation}\label{eq6.3}
	\begin{cases}
		{\begin{aligned}
				\dfrac{\partial I_{V}}{\partial t} =  D_V &\int_{\Omega_0}\rho(t)J(\rho(t)(y-z)) I_{V}(z,t)\,\mathrm{d}z -D_V I_{V}
				\\& +\alpha_{V}\beta_R \frac{  I_{R}}{N_R } \left( A_{V} - I_{V} \right) - \left( d_{V} + \frac{ \dot\rho(t)}{\rho(t)} \right) I_{V},
		\end{aligned} }& y \in \Omega_0, \ t > 0, \\
		{\begin{aligned}
				\dfrac{\partial I_{R}}{\partial t}  =  D_{R} &\int_{\Omega_0}\rho(t)J(\rho(t)(y-z)) I_{R}(z,t)\,\mathrm{d}z-D_{R}I_{R}\\& +\alpha_{R}\beta_R \frac{I_V }{N_R} \left( N_{R} - I_{R}\right)   - \left( \gamma_R + \frac{ \dot\rho(t)}{\rho(t)} \right) I_{R},
		\end{aligned}}  & y \in \Omega_0, \ t > 0.
	\end{cases}
\end{equation}
In this system, we have $\bar v:= (A_{V}, N_{R})$ and $\mathbb{M}:= [0,A_{V}]\times [0,N_{R}]$.
It follows that all the above analytical results are applicable to \eqref{eq6.3}.
For simplicity, we set $\Omega_{0}:=(0,1)$ and define $J(x)$ as follows:
\[J(x):=
\begin{cases}
1-\left| x\right| , & \left| x\right|\le 1,\\
0, & \left| x\right|> 1.
\end{cases} \]
At the following numerical simulations, we choose the initial values
\[ I_V(0,y) = \sin \dfrac{y}{2};  \quad     I_R(0,y) = 0.5 y-0.5y^{2}.\]
For illustrative purpose, we set
\[
\gamma_R=0.01, \quad N_{R}=100, \quad A_{V}=5000, \quad  D_{V}=0.03,\quad  D_{R}= 2 .
\]
For other parameters, we use values as estimated in \cite{Wonham}, namely,
\[
d_{V}=0.029, \quad \alpha_{V}=0.24, \quad \alpha_{R}=1, \quad \beta_R=0.16.
 \]

Next, we consider different asymptotic cases of $ \rho $ and conduct numerical simulations to validate
the theoretical results established for both asymptotically bounded and unbounded cases.

\vspace{0.2cm}
\noindent \textbf{Case 1. Asymptotically fixed domain}

Take $\rho(t):=\frac{\frac{3}{2}e^{2 t}}{1+\frac{1}{2}e^{2 t}}, t\geq 0 $.  It follows that $\lim\limits_{t\to+ \infty}\rho(t)=3$ and $\lim\limits_{t\to+ \infty} \dot\rho(t)=0$.
Let $\lambda^{\ast}$ be the principal eigenvalue of the following eigenvalue problem
\[
\begin{cases}
\displaystyle	D_V \int_{\Omega_0}\rho_{\infty}J(\rho_{\infty}(y-z)) I_{V}(z,t)\,\mathrm{d}z -D_V I_{V}
		+ \frac{ \alpha_{V}\beta_R A_{V} }{N_R } I_{R} - d_{V}  I_{V}=\lambda^{\ast}I_{V} ,
 \\[3mm]
\displaystyle	D_{R} \int_{\Omega_0}\rho_{\infty}J(\rho_{\infty}(y-z))I_{R}(z,t)\,\mathrm{d}z-D_{R}I_{R} +\alpha_{R}\beta_R I_V   -  \gamma_R  I_{R} = \lambda^{\ast}I_{R}.
\end{cases}
\]
It can be shown that $\lambda^{\ast}>0$, and numerical computations yield $\lambda^{\ast}\approx 0.2196$.

\vspace{0.2cm}
\noindent \textbf{Case 2. Asymptotically periodic domain}

Take $\rho(t):=(1-e^{-t})\left( 0.2\sin(0.1\pi t) +1\right) +1 $. It follows that
$\lim\limits_{t\to +\infty}  (\rho (t) -\rho_T(t))= \lim\limits_{t\to +\infty} (\dot\rho (t)-\dot\rho_T(t))= 0$, where $T=20$
and $\rho_{T}(t)= 0.2\sin( 0.1\pi t) +1$.
Let $\lambda^{\ast}_{T}$ be the principal eigenvalue of the following eigenvalue problem
\[
\begin{cases}
\begin{aligned}
	\displaystyle \dfrac{\partial I_{V}}{\partial t} =	 D_V & \int_{\Omega_0}\rho_{T}(t)J(\rho_{T}(t)(y-z)) I_{V}(z,t)\,\mathrm{d}z -D_V I_{V}\\
	& + \frac{ \alpha_{V}\beta_R A_{V} }{N_R } I_{R} - d_{V}  I_{V}- \frac{\dot\rho_{T}(t)}{\rho_{T}(t)}I_{V}+\lambda^{\ast}_{T}I_{V},
\end{aligned} &t\in \mathbb{R}, y\in (0,1),
	\\[3mm]
\begin{aligned}
	\displaystyle \dfrac{\partial I_{R}}{\partial t} = D_{R}  & \int_{\Omega_0}\rho_{T}(t)J(\rho_{T}(t)(y-z))I_{R}(z,t)\,\mathrm{d}z-D_{R}I_{R}\\ &+\alpha_{R}\beta_R I_V   -  \gamma_R  I_{R} - \frac{\dot\rho_{T}(t)}{\rho_{T}(t)}I_{R}+ \lambda^{\ast}_{T}I_{R},
\end{aligned}	 &t\in \mathbb{R}, y\in (0,1),\\
	I_{V}(T+t,y)=I_{V}(t,y), I_{R}(T+t,y)=I_{R}(t,y), &t\in \mathbb{R}, y\in (0,1).
\end{cases}
\]
It can be shown that $\lambda^{\ast}_{T}<0$, and numerical computations yield $\lambda^{\ast}_T\approx -0.2829$.

\vspace{0.2cm}
\noindent \textbf{Case 3. Asymptotically unbounded domain}

Let $\rho(t):= 0.5t+1$. Then $k:=\lim\limits_{t\to +\infty}\frac{\dot\rho(t)}{ \rho(t)}=0 $.
We choose this linear function because, in numerical simulations, a rapidly growing \( \rho \) can lead to increased numerical errors and reduced simulation accuracy.
Denote
\[ A:= \left( \begin{array}{ll}
	-d_{V}-k & \frac{ \alpha_{V}\beta_R A_{V} }{N_R }\\
	\alpha_{R}\beta_R & -\gamma_R-k
\end{array}\right) \]
By calculation, $s(A)=\frac{-0.039+\sqrt{0.039^{2}+1.22764}}{2}-k\approx 0.5348>0 $.

\vspace{0.2cm}
\noindent\textbf{Case 4. Asymptotically fixed domain with an asymmetric kernel}

We are also interested in how an asymmetric kernel affects the solution distribution.
Since the kernel is not necessarily symmetric or compactly supported in the asymptotically fixed domain case,
we consider the following kernel $J$:
\[
J(x) =
\begin{cases}
	C e^{-\frac{x^2}{2}}, & x \le  0, \\
	C(1-2x), & 0<x<\frac{1}{2},\\
	0, & x\ge \frac{1}{2},
\end{cases}
\]
where $C= 1/( 0.25+ \sqrt{\pi/2})$ is the normalizing constant. A direct computation shows that
$\int_{\mathbb{R}}J(x)\,\mathrm{d}x=1$ and $\int_{\mathbb{R}}J(x)x\,\mathrm{d}x\approx-0.6375<0$.
Other parameters are the same as in Case 1. Through similar calculations, we obtain $ \lambda^{\ast}_{1}\approx 0.1181$.

From Fig. \ref{fig:fixed}--\ref{fig:unbounded}, we observe that the results of the numerical simulations are consistent with our theoretical proofs. Although we have chosen relatively small diffusion coefficients, the influence of spatial diffusion is still evident in the figures. Furthermore, a symmetrical pattern emerges as time evolves, characterized by a central peak and decreasing values on both sides of the graph. This results from the symmetry of the kernel and the homogeneous Dirichlet boundary conditions. Additionally, as shown in Fig. \ref{fig:asymmetric}, the distribution pattern of the kernel function significantly affects the location of population aggregation. Since kernel's center of mass lies on the negative half-axis, the populations exhibit a significant leftward shift, leading to the aggregation (or maximum) point appearing near the leftmost boundary.

Moreover, from Fig. \ref{fig:unbounded}, it is shown that the solution approaches a constant in the middle part, while near the boundary, it decreases rapidly within a small neighborhood of the boundary as time evolves.
This phenomenon primarily arises from the shrinking support of the kernel over time, as well as the homogeneous Dirichlet boundary condition. Furthermore, it highlights the necessity of imposing the conditions on compact subsets, as demonstrated in our proof in Section 5.

\captionsetup{labelsep=period}
\renewcommand{\figurename}{Fig.}
\begin{figure}[htbp]
	\centering
	\subfigure[]{
		\begin{minipage}{7cm}
			\centering
			\includegraphics[scale=0.5]{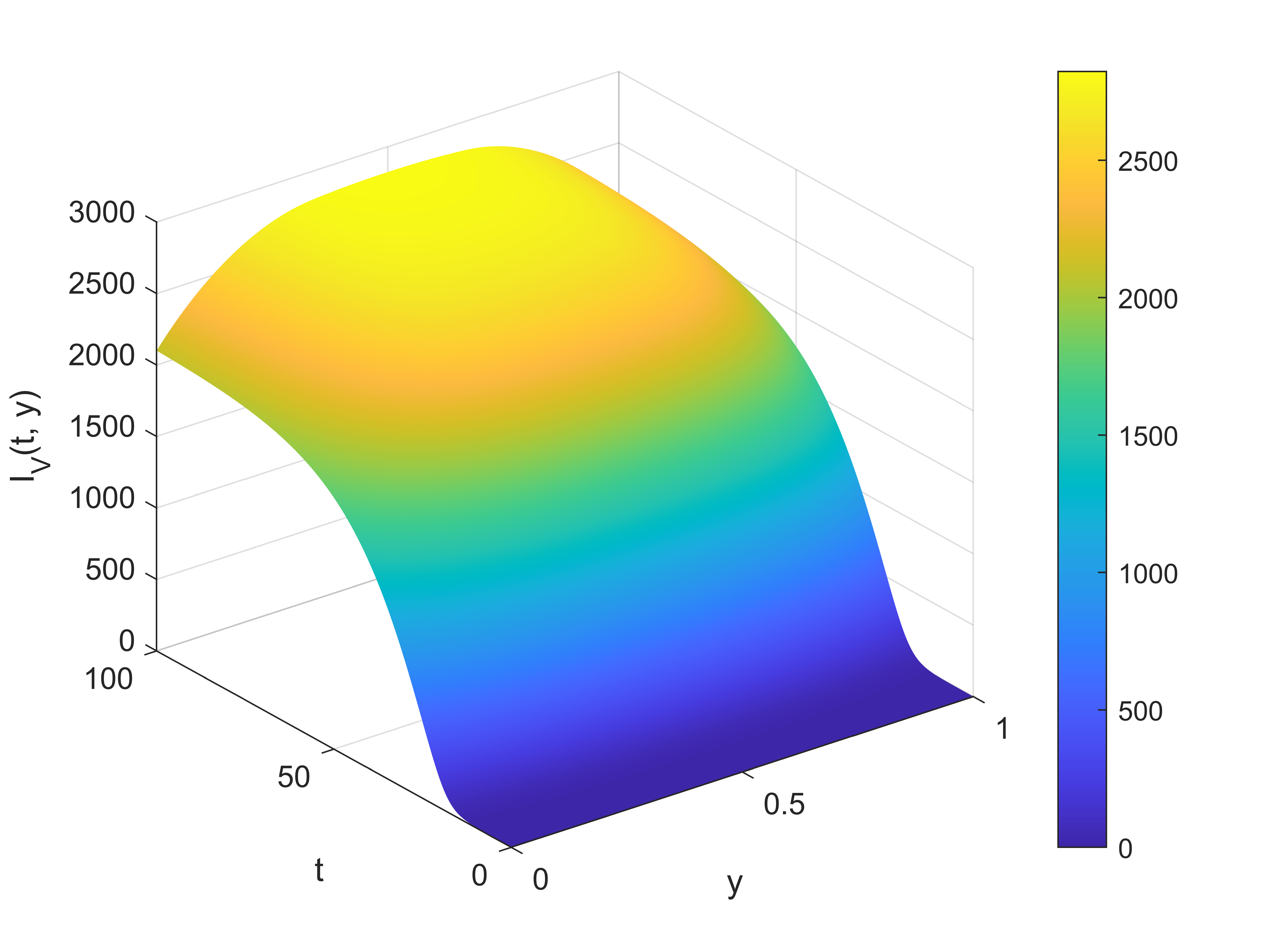}
		\end{minipage}
	}
	\hspace{0.02\textwidth}
	\subfigure[]{
		\begin{minipage}{7cm}
			\centering
			\includegraphics[scale=0.5]{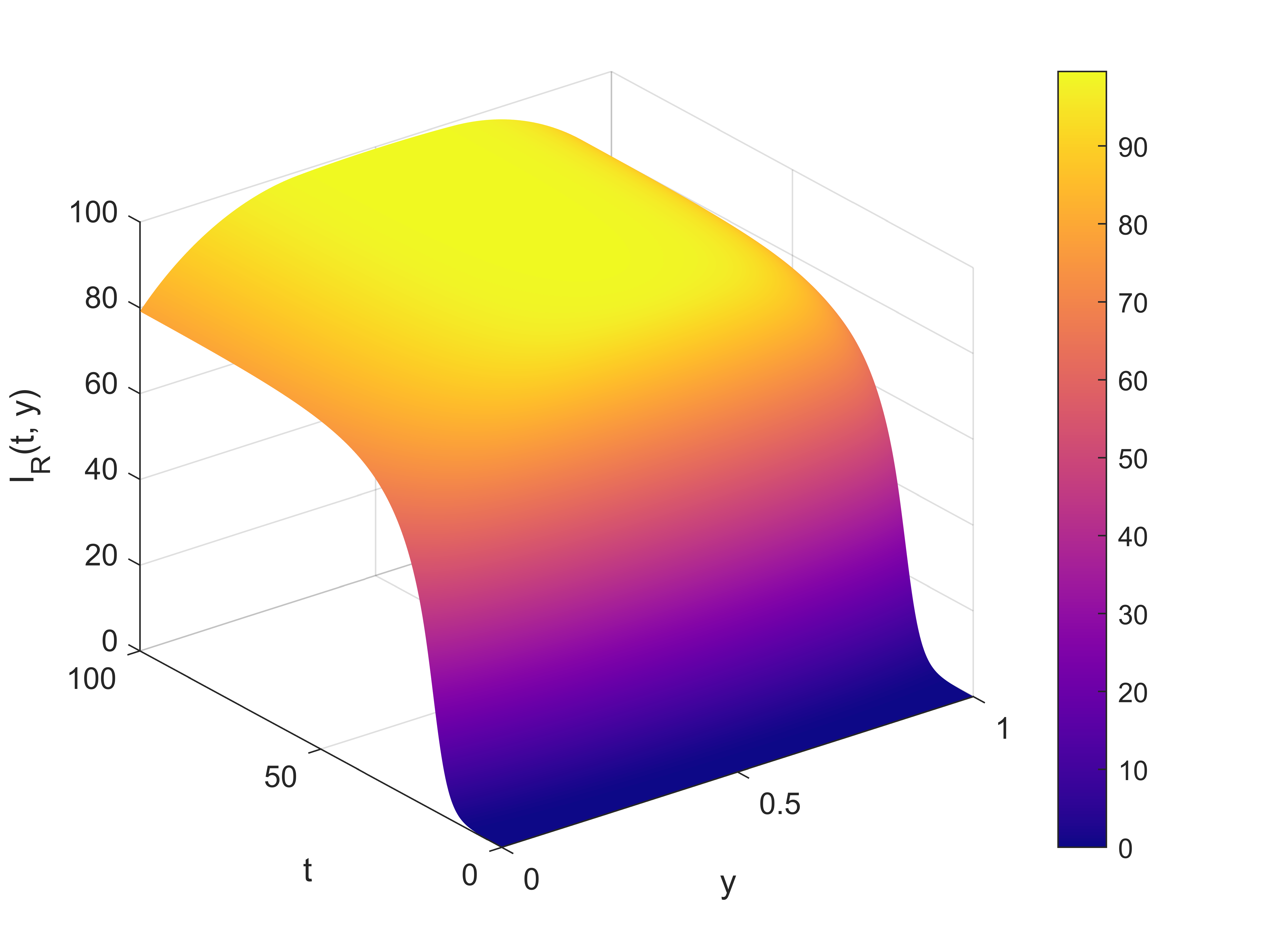}

		\end{minipage}
	}
	\caption{Dynamics of $I_V$ and $I_R $ in Case 1.}
	\label{fig:fixed}
\end{figure}

\begin{figure}[htbp]
	\centering
	\subfigure[]{
		\begin{minipage}{7cm}
			\centering
			\includegraphics[scale=0.5]{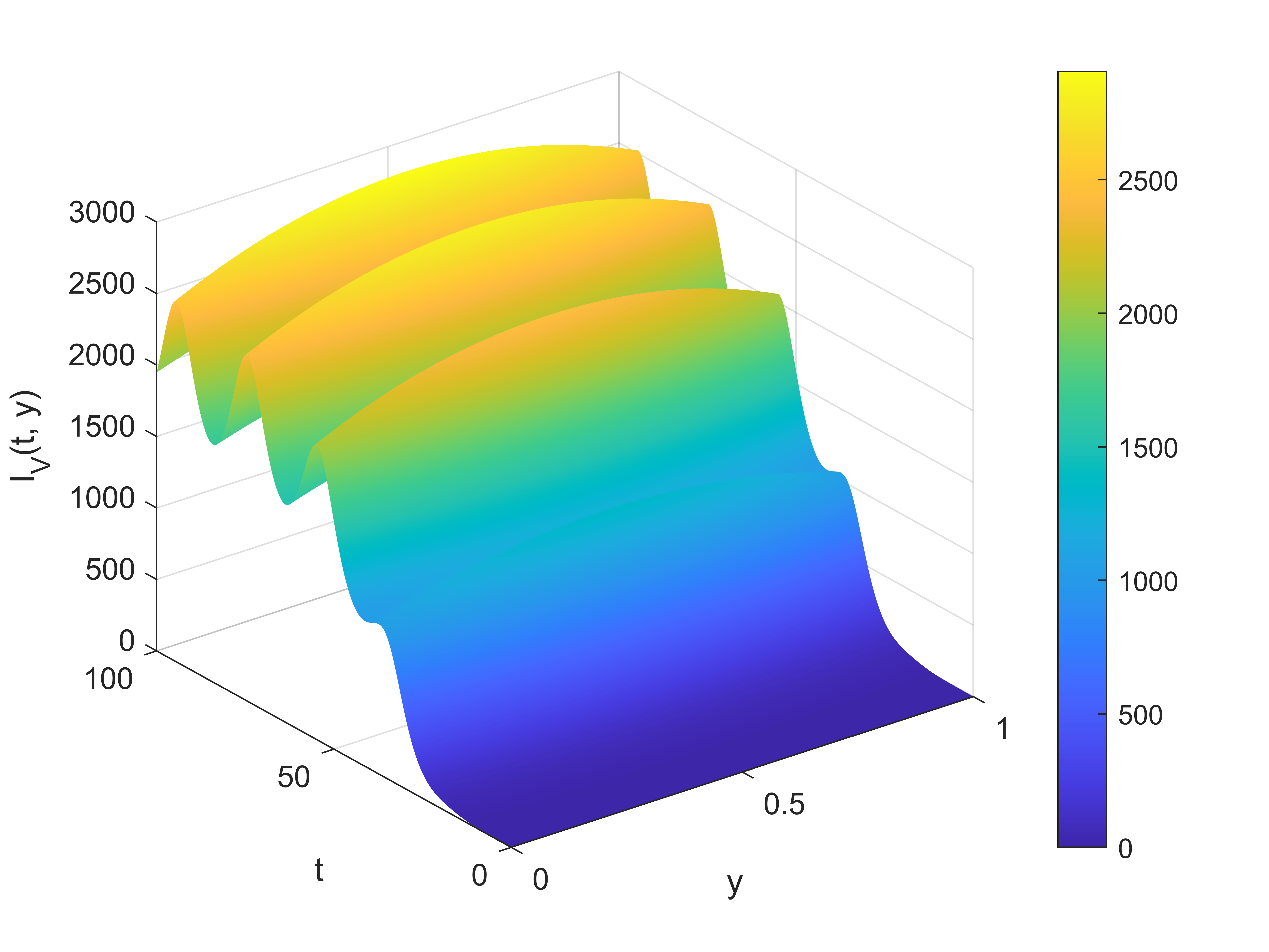}
		\end{minipage}
	}
	\hspace{0.02\textwidth}
	\subfigure[]{
		\begin{minipage}{7cm}
			\centering
			\includegraphics[scale=0.5]{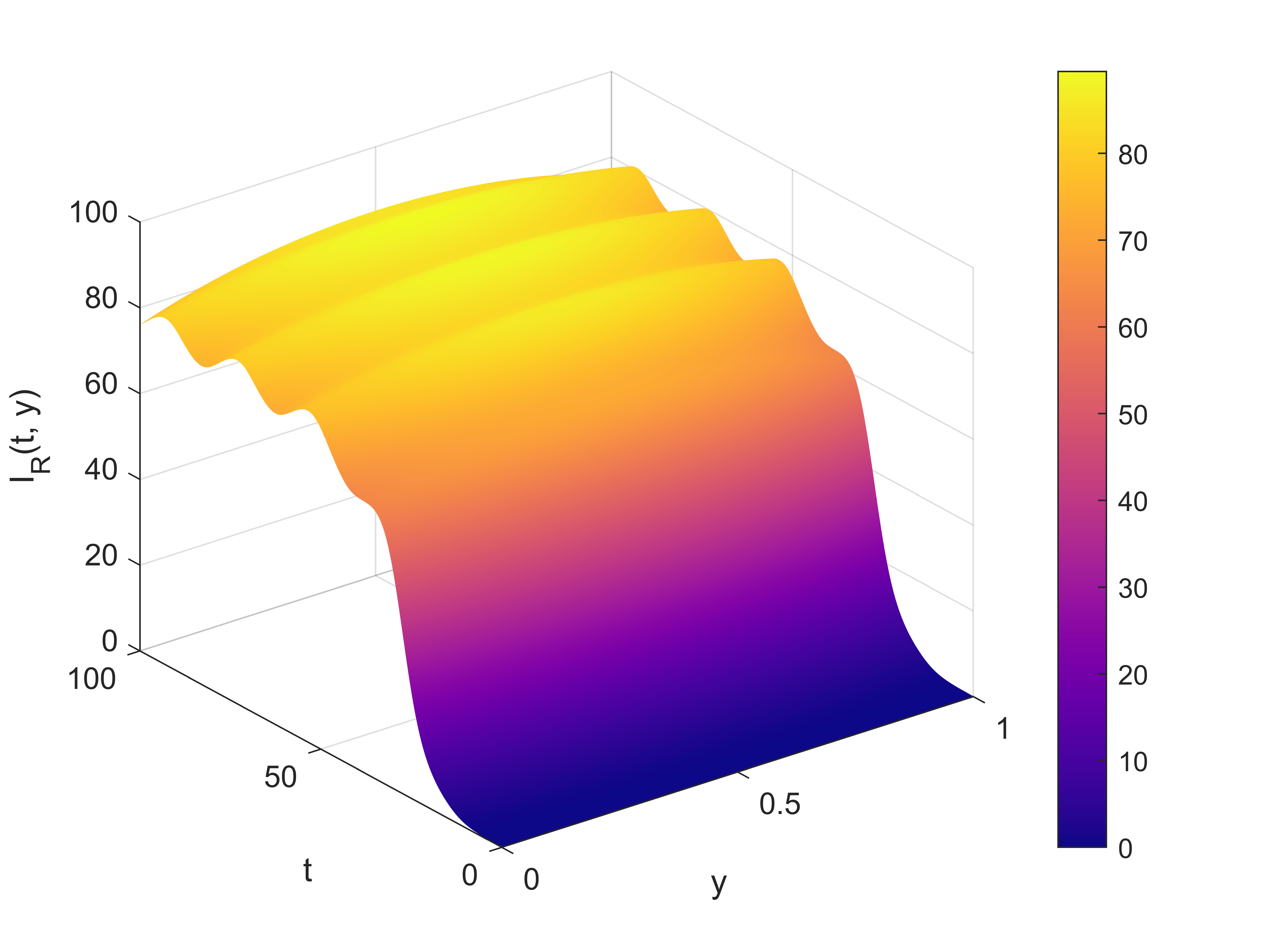}

		\end{minipage}
	}
	\caption{Dynamics of $I_V$ and $I_R $ in Case 2.}
	\label{fig:periodic}
\end{figure}

\begin{figure}[htbp]
	\centering
	\subfigure[]{
		\begin{minipage}{7cm}
			\centering
			\includegraphics[scale=0.5]{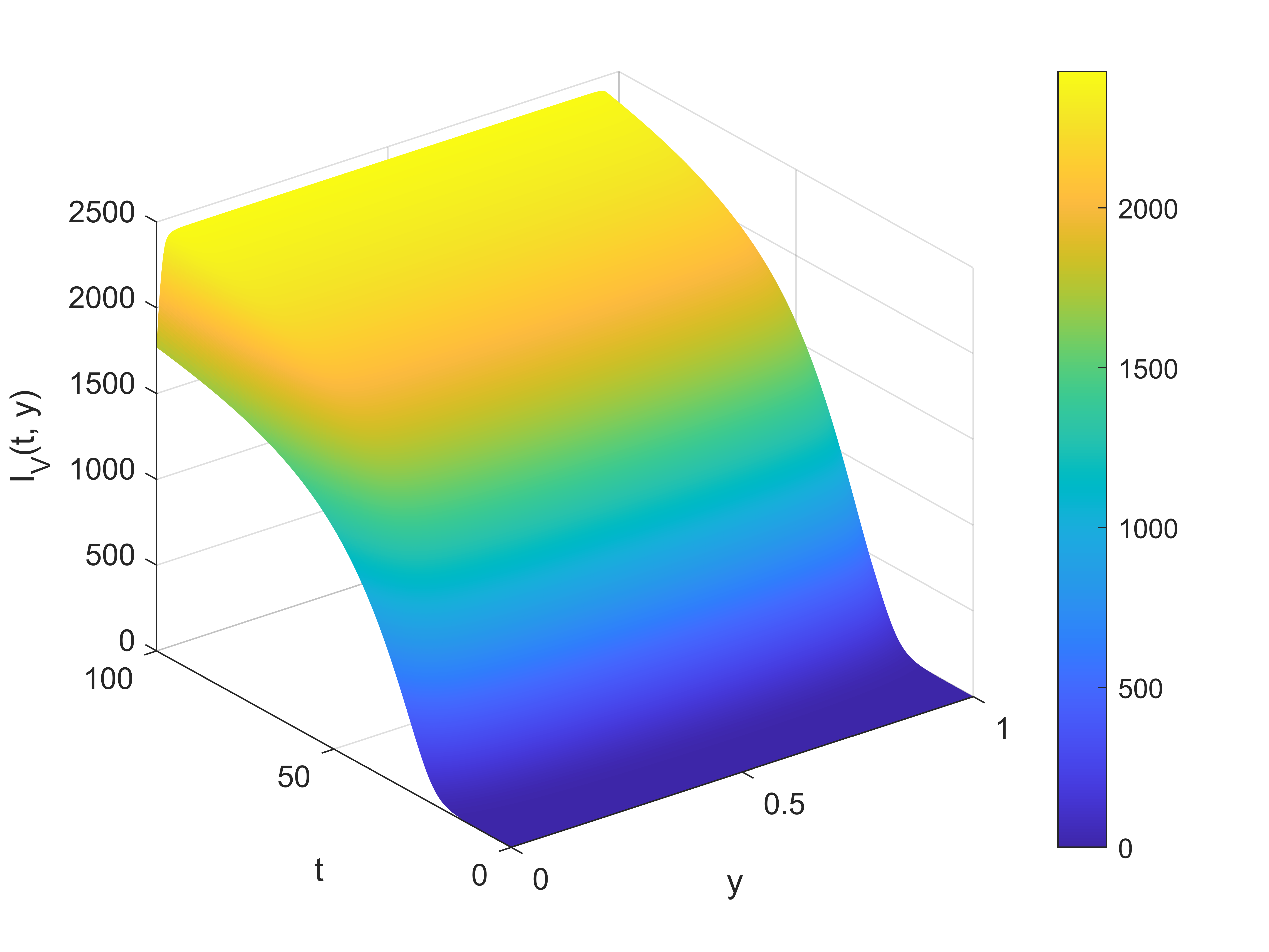}
		\end{minipage}
	}
	\hspace{0.02\textwidth}
	\subfigure[]{
		\begin{minipage}{7cm}
			\centering
			\includegraphics[scale=0.5]{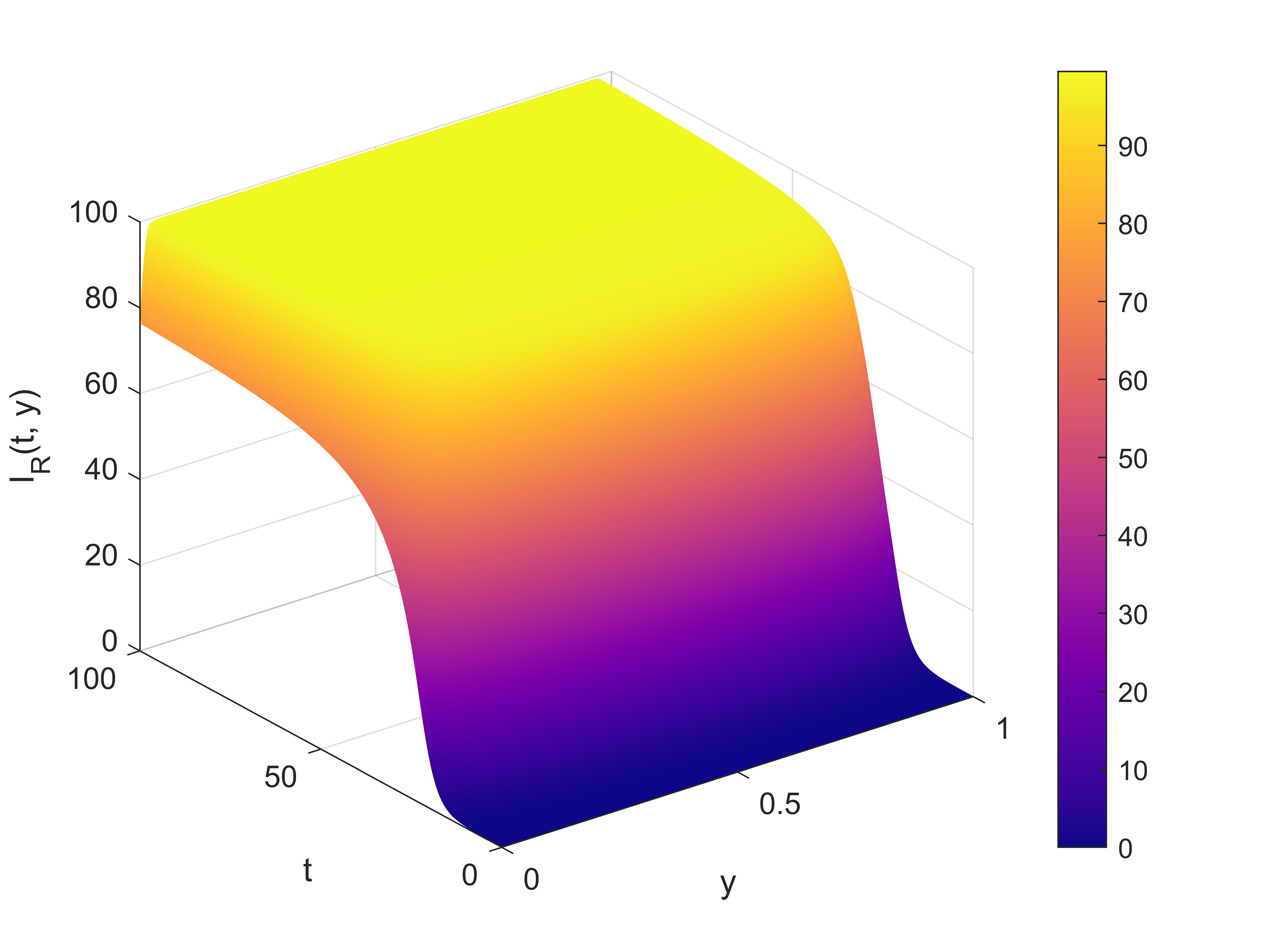}

		\end{minipage}
	}
	\caption{Dynamics of $I_V$ and $I_R $ in Case 3.}
	\label{fig:unbounded}
\end{figure}

\begin{figure}[htbp]
	\centering
	\subfigure[]{
		\begin{minipage}{7cm}
			\centering
			\includegraphics[scale=0.5]{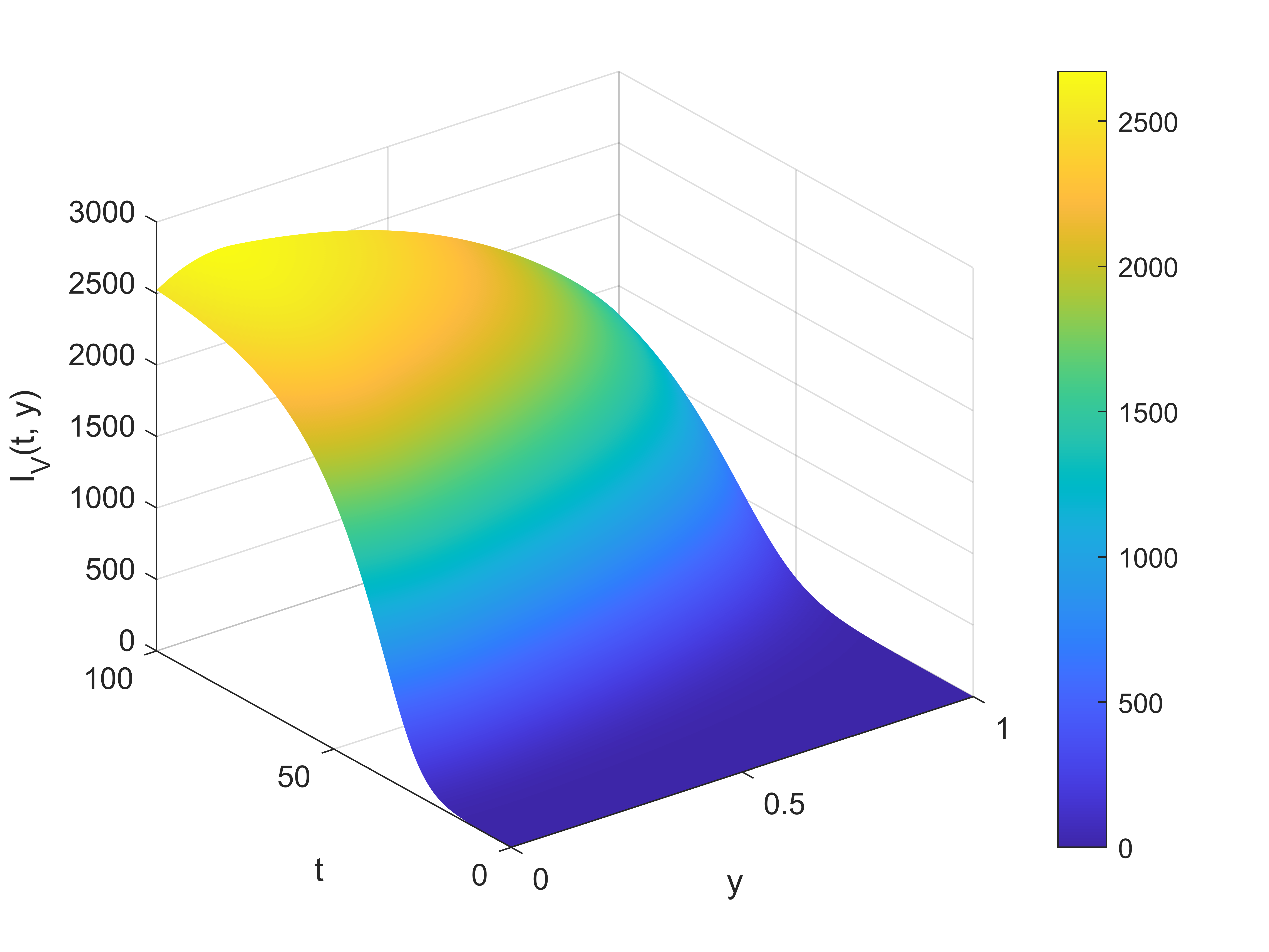}
		\end{minipage}
	}
	\hspace{0.02\textwidth}
	\subfigure[]{
		\begin{minipage}{7cm}
			\centering
			\includegraphics[scale=0.5]{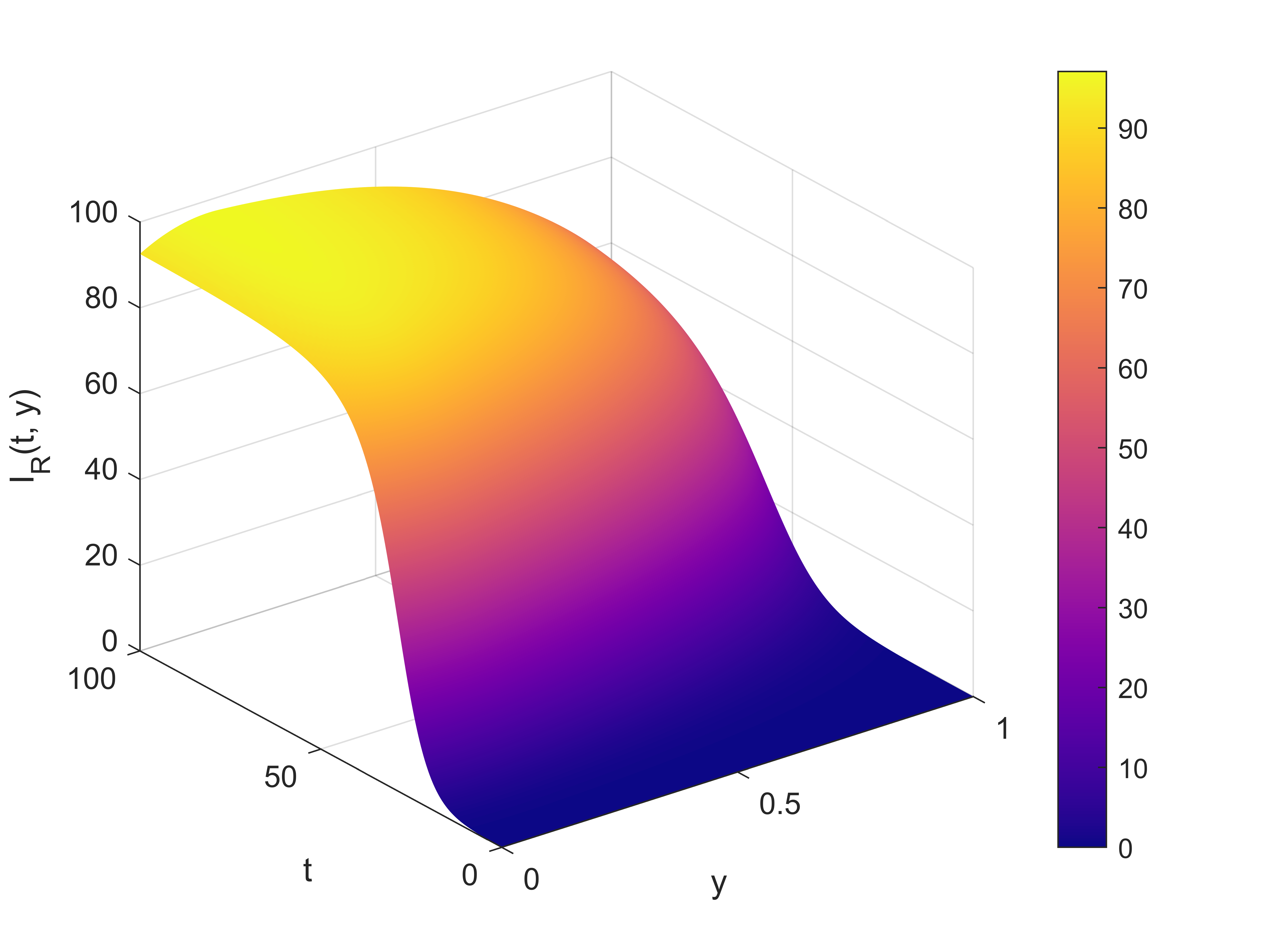}

		\end{minipage}
	}
	\caption{Dynamics of $I_V$ and $I_R $ in Case 4.}
	\label{fig:asymmetric}
\end{figure}

\newpage

\section*{Acknowledgments}
X. Lin's research was supported in part by CSC (202306380200) and NSFC (12471176).
H. Ye's research was supported in part by NSFC (12271186, 12271178), the Science and Technology Program of Shenzhen, China (20231120205244001), and  CSC (202308440378). X.-Q. Zhao's research was supported in part by the NSERC of Canada (RGPIN-2019-05648).

\end{document}